\theoremstyle{plain}
\newtheorem{lemma}{Lemma}%[chapter]
\numberwithin{lemma}{section}
\newtheorem{corollary}{Corollary}%[chapter]
\numberwithin{corollary}{section}
\newtheorem{definition}{Definition}
\numberwithin{definition}{section}
\newtheorem{theorem}{Theorem}%[chapter]
\numberwithin{theorem}{section}
\newcommand\orb{\textrm{orb}}
\newcommand\degr{\textrm{  deg}}
 \newcommand\supp{\textrm{  supp}}
 \newcommand\C{{\mathcal C}}
\def\P{{\mathcal P}}
 \newcommand\T{{ T}}
\newcommand\X{{\mathcal X}}
 \newcommand\R{{\mathbb R}}
 \newcommand\1{{\mathbb{1}}}
\title[Transport-entropy inequalities on locally acting  groups]
{Transport-entropy inequalities\\ on locally acting  groups of permutations}
\author[P.-M. Samson]{ Paul-Marie Samson}
\date{\today}
\thanks{Supported by the grants ANR 2011 BS01 007 01, ANR 10 LABX-58}
\address{Samson P.-M., Universit\'e Paris-Est, Laboratoire d'Analyse et de Math\'ematiques Appliqu\'ees (UMR 8050), UPEM, UPEC, CNRS, F-77454, Marne-la-Vall\'ee, France}
\email{paul-marie.samson@u-pem.fr}
\keywords{Concentration of measure, Random permutation, Symmetric  group,  Slices of the discrete cube,  Transport inequalities, Ewens distribution, Chinese restaurant process, Deviation's inequalities for configuration functions}
\subjclass{60E15, 32F32, 39B62, 26D10}
\begin{document}
\maketitle
\begin{abstract} Following Talagrand's concentration results for permutations picked uniformly at random from a symmetric group \cite{Tal95}, Luczak and McDiarmid have generalized it to  
more general groups $G$ of permutations which act suitably `locally'.  Here we extend  their results by setting transport-entropy inequalities on these permutations groups. Talagrand and Luczak-Mc-Diarmid concentration properties are consequences of these inequalities. The results are  also generalised   to a larger class of measures including Ewens distributions of arbitrary parameter $\theta$ on the symmetric group.  By projection, we derive transport-entropy inequalities for the uniform law on the slice of the discrete hypercube  and more generally for the multinomial law.  
These  results are new  examples, in discrete setting, of weak transport-entropy  inequalities introduced in \cite{GRST16}, that contribute to a better understanding of the concentration properties of measures on  permutations groups.   One typical application is deviation bounds for the so-called configuration functions, such as the number of cycles of given lenght  in the cycle decomposition of a random permutation.
\end{abstract}

%\tableofcontents

\date{\today}

\section{Introduction}

Let $S_n$ denote the symmetric group of permutations acting on a set $\Omega$ of cardinality $n$, and   $\mu_o$ denote the uniform law on  $S_n$, 
$\mu_o(\sigma):=\frac{1}{n!}, \sigma\in S_n.$ A seminal concentration result on $S_n$ obtained by  Maurey   is the following. 

 \begin{theorem}\cite{Mau79}\label{permmau} Let  $d_H$ be the  Hamming distance on the symmetric group, for  all $\sigma,\tau\in S_n$, 
\[d_H(\sigma,\tau):=\sum_{i\in \Omega} \1_{\sigma(i)\neq \tau(i)}.\]
Then for any subset  $A\subset S_n$ such that  $\mu_o(A)\geq 1/2$, and for all   $t\geq 0$, one has  
\[\mu_o(A_t) \geq 1-2e^{-\frac{t^2}{64n}},\]
where $A_t:=\{y\in S_n, d_H(x,A)\leq t\}$.
\end{theorem}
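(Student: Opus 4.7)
The plan is to reduce the statement to a sub-Gaussian concentration inequality for an arbitrary $1$-Lipschitz function $F\colon(S_n,d_H)\to\R$, and to prove the latter by the martingale method. Applied to $F(\sigma):=d_H(\sigma,A)$, which is $1$-Lipschitz by the triangle inequality, the hypothesis $\mu_o(A)\geq 1/2$ forces $0$ to be a median of $F$ (and yields a control on the mean), while the event $A_t^c$ is exactly $\{F\geq t\}$, so a Gaussian tail of order $e^{-t^2/(cn)}$ translates directly into the stated isoperimetric bound.

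To establish the sub-Gaussian estimate, I would draw $\sigma\sim\mu_o$ and introduce the exposure filtration $\mathcal{F}_k:=\sigma(\sigma(1),\ldots,\sigma(k))$, together with the corresponding Doob martingale $M_k:=\mathbb{E}_{\mu_o}[F(\sigma)\mid\mathcal{F}_k]$, so that $M_0=\int F\,d\mu_o$ and $M_n=F(\sigma)$. The key technical step is to show $|M_k-M_{k-1}|\leq 2$ almost surely, which I would obtain by a coupling argument: conditionally on $\sigma(1),\ldots,\sigma(k-1)$, for any two admissible values $a,b$ of $\sigma(k)$, the conditional law of a completion with $\sigma(k)=a$ is coupled with that for $\sigma(k)=b$ through the involution that exchanges the two positions carrying the images $a$ and $b$; the two permutations then differ in at most two coordinates, and the $1$-Lipschitz property of $F$ yields the bound $2$ on the conditional increment.

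With $|M_k-M_{k-1}|\leq 2$ for each $k$, Azuma--Hoeffding gives $\mu_o\bigl(F-\int F\,d\mu_o\geq t\bigr)\leq e^{-t^2/(8n)}$, and symmetrically for the lower tail. Applied to $F=d_H(\cdot,A)$, the lower tail together with $\mu_o(F=0)\geq 1/2$ bounds the mean $\int F\,d\mu_o$ by a multiple of $\sqrt{n}$; combining with the upper tail and allowing the constant in the exponent to worsen (absorbing small $t$ into the prefactor $2$) then produces the claimed $2e^{-t^2/(64n)}$ decay. The main obstacle I anticipate is the coupling step: one must verify that the involutive swap is a measure-preserving bijection between the two conditional distributions and really changes only two coordinates. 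This is the place where the combinatorial rigidity of $S_n$, which precludes a direct application of a product-space inequality such as McDiarmid's, has to be handled carefully.
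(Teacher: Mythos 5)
Your argument is correct and self-contained; it is essentially Maurey's original martingale proof, which is why the paper merely cites Theorem \ref{permmau} rather than proving it. The route indicated in the paper is different: right after Theorem \ref{tala95} it observes that $f(\sigma,A)\ge\frac1n d_H^2(\sigma,A)$, so Talagrand's inequality $\int e^{f(\sigma,A)/16}\,d\mu_o\le 1/\mu_o(A)$ together with Chebyshev already yields $\mu_o(A_t^c)\le 2e^{-t^2/(16n)}$, a sharper exponent; the paper's own contribution is then to recover and generalize Talagrand's inequality from the weak transport-entropy inequalities of Theorem \ref{transportsymetrique}, proved by an induction on $n$ that conditions on the coset $H_i=\{\sigma:\sigma(i)=n\}$, of which your Doob martingale with filtration generated by $\sigma(1),\dots,\sigma(k)$ is a close cousin. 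The coupling step you flag as the potential obstacle is sound: conditionally on $\sigma(1),\dots,\sigma(k-1)$, the transposition swapping the two positions carrying $a$ and $b$ is a measure-preserving bijection between the two conditional fibers that changes exactly two coordinates, so $|M_k-M_{k-1}|\le 2$; Azuma then gives $e^{-t^2/(8n)}$ tails around the mean, the hypothesis $\mu_o(F=0)\ge 1/2$ forces $\int F\,d\mu_o\le\sqrt{8n\log 2}$, and the stated $2e^{-t^2/(64n)}$ follows by treating $t\le 2\int F\,d\mu_o$ (where the right-hand side already exceeds $1$) and $t>2\int F\,d\mu_o$ separately. Your route is more elementary and short; the paper's route yields a single master inequality with a better constant that, crucially for the rest of the paper, extends to $\ell$-local groups and to Ewens-type measures.
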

Milman and Schechtman \cite{MS86} generalized this result  to some groups whose distance is  invariant by translation. For example,
in the above result  we may  replace (up to constants) the Hamming distance by the  \textit{ transposition distance} $d_T(\sigma,\tau)$ that corresponds to the minimal number of transpositions $ t_1,..., t_k$ such that $\sigma t_1\cdots t_k=\tau$. The distances $d_T$ and $d_H$ are comparable,  \[\frac12 d_H(\sigma,\tau)\leq d_T(\sigma,\tau)\leq d_H(\sigma,\tau)-1,  \qquad \forall \sigma\neq\tau.\]
(We refer to \cite{BHT06} for comments  about these comparison  inequalities).
 
 Let us also observe that Theorem \ref{permmau} can be also recover  from the transportation cost inequality approach of Theorem 1 of \cite{Mar03}.
 
A few years later, a stronger    concentration property  in terms of dependence in the parameter  $n$,  has been settled   by Talagrand using the so-called ``convex-hull'' method \cite{Tal95} (see also \cite{Led01}). 
 This property implies Maurey's result  with a slightly worse constant. 
 Let us recall some notations  from \cite{Tal95}. For each $A \subset S_n$ and $\sigma \in S_n$, let $V(\sigma,A)\subset \R^\Omega$ be the set of vectors $z =(z_j)_{j\in\Omega} \in \R^\Omega$ with $z_j := \1_{\sigma(j)\neq y(j)}$ for $y \in A.$ Let $\mathrm{conv}(V(\sigma,A))$ denote the convex hull of $V(\sigma,A)$ in $\R^\Omega$, 
 \[V(\sigma,A):=\left\{  x =(x_j)_{j\in\Omega}, \exists p\in \mathcal{P}(A),\forall j\in \Omega, x_j=\int  \1_{\sigma(j)\neq y(j)} dp(y)\right\},\]
 where $\P(A)$ denotes the set of probability measures on $A$. Talagrand introduced the quantity 
\[
f(\sigma,A) := \inf\{ \|x\|^2_2 ;  x\in \mathrm{conv}(V(\sigma,A))\}.
\]
with $\|x\|^2_2:=\sum_{i\in \Omega} x_i^2$, that measures   the distance from $\sigma$ to the subset $A$. 

\begin{theorem}\label{tala95}\cite{Tal95} For any subset  $A\subset S_n$, 
\[\int_{S_n} e^{f(\sigma,A)/16} d\mu_o(\sigma)\leq \frac1{\mu_o(A)}. \]
\end{theorem}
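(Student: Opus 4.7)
The plan is to prove the equivalent statement $I(A)\mu_o(A)\le 1$, where $I(A):=\int e^{f(\sigma,A)/16}\,d\mu_o(\sigma)$, by induction on $n=|\Omega|$, following Talagrand's convex-hull method. The base case $n=1$ is immediate since $f\equiv 0$. For the inductive step I fix an element $i_0\in\Omega$ and partition $S_n$ and $A$ by the value $\sigma(i_0)$: writing $S_n^k:=\{\sigma:\sigma(i_0)=k\}$, $A_k:=A\cap S_n^k$, and identifying each $S_n^k$ canonically with $S_{n-1}$ (permutations of $\Omega\setminus\{i_0\}$ after relabelling the codomain), each $A_k$ corresponds to a subset $B_k\subset S_{n-1}$, and $\mu_o(A)=\tfrac1n\sum_k a_k$ with $a_k:=\mu_{n-1}(B_k)$.

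For $\sigma\in S_n^{k_0}$, the recursive upper bound on $f(\sigma,A)$ comes from building points in $\mathrm{conv}(V(\sigma,A))$ out of optimisers on $S_{n-1}$. For each $k$ and each probability $p_k$ on $A_k$, the averaged indicator $\int z(\sigma,y)\,dp_k(y)$ has $i_0$-coordinate equal to $0$ if $k=k_0$ and $1$ otherwise; after composing with the transposition $(k_0\ k)$ when $k\ne k_0$, its restriction to $\Omega\setminus\{i_0\}$ lies, up to an additive ``swap perturbation'' of controlled norm, in $\mathrm{conv}(V(\sigma_*,B_k))$, where $\sigma_*$ is the restriction of $\sigma$ to $\Omega\setminus\{i_0\}$. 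Choosing the $p_k$ optimal downstairs and forming a convex combination $x:=\sum_k\lambda_k\,x_k\in\mathrm{conv}(V(\sigma,A))$ with probability weights $(\lambda_k)$, convexity of $\|\cdot\|_2^2$ then yields
\[
f(\sigma,A)\;\le\;\sum_{k\ne k_0}\lambda_k\;+\;\sum_k\lambda_k\,f_{n-1}(\sigma_*,B_k)\;+\;R(\sigma,(\lambda_k)),
\]
with $R$ a bounded remainder encoding the swaps.

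Integrating $e^{f(\sigma,A)/16}$ against $\mu_o$ via the convexity inequality $e^{\sum_k\lambda_k u_k}\le\sum_k\lambda_k e^{u_k}$ and applying the inductive hypothesis term by term on each $S_n^{k_0}\simeq S_{n-1}$ produces a recursion of the form
\[
I(A)\;\le\;\frac{1}{n}\sum_{k_0}e^{C(1-\lambda_{k_0})/16}\sum_k\frac{\lambda_k}{a_k},
\]
for a constant $C$ absorbing the swap cost. Multiplying by $\mu_o(A)$ and choosing the weights $\lambda_k$ in terms of the $a_k$ reduces the induction to a one-variable scalar inequality. The main obstacle is precisely this last optimisation: the swap cost, the probability weights, and the scalar inequality must be calibrated so that the constant $16$ emerges, and balancing these contributions against the combinatorial averaging $\tfrac1n\sum_{k_0}$ is the delicate technical core of Talagrand's argument.
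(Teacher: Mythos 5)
Your approach and the paper's are genuinely different. The paper never argues directly on $f(\sigma,A)$: Theorem~\ref{tala95} is obtained as the special case $\alpha=\tfrac12$, $\ell=2$, $c(\ell)^2=4$ of Corollary~\ref{talgen}, which is itself a consequence (via duality and a limiting argument on step functions) of the weak transport-entropy inequality~\eqref{Tparen} of Theorem~\ref{transportsymetrique}(b). That inequality is proved by induction on $n$ over the cosets $H_i=\{\sigma:\sigma(i)=n\}$, but the induction runs entirely on the infimum-convolution side, through the operators $Q^j\varphi$, Lemma~\ref{lemclef}, and the two-point Lemma~\ref{completegraph}. What you sketch is instead Talagrand's original convex-hull induction on $I(A)=\int e^{f(\sigma,A)/16}d\mu_o$. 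Both are valid roads to the statement, and it is a fair observation that the paper's route buys a whole family of inequalities (all $\alpha\in(0,1)$, all $\ell$-local groups, all measures in $\mathcal M_{\mathcal T}$) of which Theorem~\ref{tala95} is one corollary.

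That said, your sketch has a real gap and does not yet constitute a proof. First, in the displayed recursive bound you charge the $i_0$-coordinate a \emph{linear} cost $\sum_{k\ne k_0}\lambda_k=1-\lambda_{k_0}$, but the $i_0$-th coordinate of the convex combination $x$ is exactly $1-\lambda_{k_0}$, so its contribution to $\|x\|_2^2$ is $(1-\lambda_{k_0})^2$. The square is not cosmetic: it is precisely what separates Talagrand's bound from the Maurey/Hoeffding bound, and the closing scalar inequality in \cite{Tal95} (of the type $\inf_{0\le\lambda\le1} r^{-\lambda}e^{(1-\lambda)^2/4}\le 2-r$ for $r\in(0,1]$) only holds with the squared exponent. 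With the linear exponent you wrote, $e^{C(1-\lambda_{k_0})/16}$, the one-variable optimisation cannot produce $\int e^{f/C}\,d\mu_o\le 1/\mu_o(A)$ for any finite $C$. Second, the remainder $R(\sigma,(\lambda_k))$ ``encoding the swaps'' is never quantified; but the swap cost (bounded because a transposition alters only two further coordinates) is exactly where the numerical constant $16$ is generated, so an unspecified $R$ cannot be absorbed after the fact. You yourself flag the final calibration as ``the delicate technical core'' and stop there --- which is precisely the part that needs to be done. As it stands, the proposal is a correct description of the shape of Talagrand's argument, not a proof of the inequality.
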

Maurey's concentration result easily follows by observing that 
\[f(\sigma,A)\geq \frac1n \left(\inf\left\{ \sum_{i\in \Omega} x_i ;  x\in \mathrm{conv}(V(\sigma,A))\right\}\right)^2=\frac1n d_H^2(\sigma,A)\]
and applying Tchebychev inequality with usual optimization arguments.

Talagrand's result  has been first extended to the uniform probability measure on  product of symmetric groups by McDiarmid \cite{McD02},  and then further by  Luczak and McDiarmid  to  cover more general permutation groups which act suitably ``locally'' \cite{LD03}.

For any finite subset $A$, let $\#A$ denote the \textit{cardinality} of $A$. 
For any  $\sigma\in S_n$, the  \textit{ support} of $\sigma$, denoted by $\supp(\sigma)$, is the set $\{i\in\Omega, \sigma(i)\neq i\}$ and the  \textit{ degree} of $\sigma$, denoted by $\degr(\sigma)$, is the cardinality of $\supp(\sigma)$, $\degr(\sigma):=\#\,\supp(\sigma) $. 

By definition, according to  \cite{LD03}, a group of permutations $G$ is  \textit{ $\ell$-local}, $\ell\in\{2,\ldots,n\}$, if for any $\sigma\in G$ and  any $i,j\in \Omega$ with $\sigma(i)=j$, there exists $\tau\in G$ such that $\supp(\tau)\subset\supp(\sigma)$, $\degr(\tau)\leq \ell$ and   $\tau(i)=j$.

The  \textit{ orbit} of an element $j\in \Omega$, denoted by $\orb(j)$, is the set of elements in $\Omega$ connected to $j$ by a permutation  of $G$,
\[  \orb(j):=\big\{ \sigma(j), \sigma\in G\big\}. \]
The set of orbits provides a partition of $G$.
%Let us observe that for any $j\in[n]$,  $j\in \orb(j)$ since the identity belongs to  $G$. The identity will be denoted by $id$.

As explained in \cite{LD03}, any $2$-local group is a direct product of symmetric groups on its orbits, the alternating group (consisting of even permutations) is $3$-local, and  any $3$-local group is a direct product of symmetric or alternating groups on its orbits.

In the present  paper, the concentration result  by Luczak-McDiarmid  and Talagrand is a consequence of a weak transport-entropy inequality satisfied by the uniform law on $G$, $\mu_o$. We also prove weaker types of transport entropy inequalities. Moreover we extend the results to a larger class of probability measures on $G$, denoted by $\mathcal M$. 

For a better comprehension of the class of  measures $\mathcal M$, let us first consider the  case of the symmetric group $S_n$ on $[n]:=\{1,\ldots,n\}$. 
Let $(i,j)$ denote the transposition in $S_n$ that exchanges the elements $i$ and $j$ in $[n]$. 
It follows by induction   that the map 
\[\begin{array}{cccl} & \{1,2\}\times\{1,2,3\}\times \cdots \times \{1,\ldots , n\}& \to &S_n\\
U: &i_2,i_3, \dots ,i_n&\mapsto & (i_2,2)(i_3,3)\cdots (i_n,n), \end{array}\]
is one to one. 

The set of measures  $\mathcal M$ consists  of probability measures on $S_n$ which are  pushed forward by the map $U$ of  product probability measures on  $\{1,2\}\times\{1,2,3\}\times \cdots \times \{1,\ldots , n\}$, 
\begin{eqnarray}\label{defM}
\mathcal{M}:=\big\{U\#\hat \nu, \hat \nu=\hat \nu_2\otimes\cdots \otimes \hat \nu_n \mbox{ with }  \hat \nu_j\in\P([j]),\;\forall j\in \{2,\ldots, n\}\big\},
\end{eqnarray} 
where by definition $U\#\hat \nu(C)=\hat \nu(U^{-1}(C))$ for any subset $C$ in $S_n$.

The uniform measure $\mu_o$ on $S_n$ belongs to the set $\mathcal M$ since $\mu_o=U\#\hat \mu$ with $\hat \mu=\hat \mu_2\otimes\cdots \otimes \hat \mu_n$, where for each $i$, $\hat \mu_i$ denotes the uniform law on $[i]$. 

The Ewens distribution of parameter $\theta>0$, denoted by $\mu^\theta$, is also an example of measure of $\mathcal M$. Indeed, it is well known (see \cite[Chapter 5]{ABT03}, \cite{JKB97}) that $\mu^\theta=U\#\hat \mu^\theta$ with $\hat \mu^\theta=\hat \mu^\theta_2\otimes\cdots \otimes \hat \mu^\theta_n$, where for any $j\in\{2,\ldots,n\}$, the measure $\hat\mu^\theta_j\in {\mathcal P}([j])$ is given by
\[\hat\mu^\theta_j(j)=\frac{\theta}{\theta+j-1},\quad, \hat\mu^\theta_j(1)=\cdots= \hat \mu^\theta_j(j-1) =\frac{1}{\theta+j-1}.\]

This definition provides an easy algorithm  for simulating  a random permutation with law $\mu^\theta$. This procedure is known as  \textit{ a Chinese restaurant process} (see \cite[Chapter 2]{ABT03}, \cite{Pit02}).

Let us  observe that the uniform distribution $\mu_o$ corresponds to  the Ewens distribution with parameter 1, $\mu^1$.

The Ewens distribution is also given by the following expression (see \cite[Chapter 5]{ABT03}), 
\begin{eqnarray}\label{ewens}
\mu^\theta(\sigma):=\frac {\theta^{|\sigma|}}{\theta^{(n)}}, \quad \sigma\in S_n,
\end{eqnarray}
where 
 $|\sigma|$ denotes the number of cycles in the cycle decomposition of $\sigma$ and 
$\theta^{(n)}$  is the Pochhammer symbol defined by \[ \theta^{(n)}:=\frac{\Gamma(\theta+n)}{\Gamma(\theta)},\qquad\mbox{ with } \quad{\Gamma}(\theta):=\int_0^{+\infty} s^{\theta-1} e^{-s} ds.\]

Let us now construct the class of measures $\mathcal M$ for any  group $G$ of permutations. 
To clarify the notations, the elements of $\Omega$ are labelled with  integers, $\Omega=[n]$. Let $G_n:=G$ and for any $j\in [n-1]$, let $G_j$ denotes the subgroup of $G$ defined by 
\[ G_j:=\left\{\sigma\in G, \sigma(j+1)=j+1,\ldots,\sigma(n)=n\right\}, \]
We denote by  $O_j$ the orbit of $j$ in $G_j$,
\[O_j:=\left\{\sigma(j),\sigma \in G_j\right\}.\]
Let us observe that $\{j\}\subset O_j\subset [j]$.

\begin{definition} Let $G$ be a group of permutations. A family $\mathcal {T}=(t_{i_j,j})$ of permutations  of $G$, indexed by $j\in \{2,\ldots, n\}$ and $i_j\in O_j$, is  called  ``$\ell$-local base of $G$''
if for every $j\in \{2,\ldots, n\}$, $t_{j,j}:=id$, for every $i_j\neq j$, $t_{i_j,j}\in G_j$ and 
\[t_{i_j,j}(i_j)=j, \quad \mbox{and}\quad  \degr(t_{i_jj})\leq \ell.\]
\end{definition}
\begin{lemma}\label{map} Let $\mathcal {T}=(t_{i_j,j})$ be a $\ell$-local base of a group of permutations $G$. Then the map 
\begin{eqnarray}\label{TT}\begin{array}{cccl} & O_2\times O_3\times \cdots \times O_n& \to &G\\
U_{\mathcal T}: &i_2,i_3, \dots ,i_n&\mapsto & t_{i_2,2} t_{i_3,3} \cdots t_{i_n,n}, \end{array}
\end{eqnarray}
is one to one. 
\end{lemma}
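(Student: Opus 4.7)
The plan is to establish bijectivity of $U_{\mathcal T}$ by induction on $n$, peeling off the rightmost factor $t_{i_n,n}$ at each step. The crucial observation is the locality property: since $t_{i_j,j} \in G_j$ fixes every integer strictly larger than $j$, the prefix $t_{i_2,2}\cdots t_{i_{n-1},n-1}$ fixes $n$, while $t_{i_n,n}(i_n) = n$. Hence if $\sigma = U_{\mathcal T}(i_2,\ldots,i_n)$, then $\sigma(i_n) = n$, so $i_n = \sigma^{-1}(n)$ is uniquely determined by $\sigma$. Multiplying by $t_{i_n,n}^{-1}$ on the right reduces injectivity to the analogous statement for $G_{n-1}$ and the truncated family $(t_{i_j,j})_{2\leq j\leq n-1}$, which the inductive hypothesis handles.

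For surjectivity, given $\sigma \in G$, I would set $i_n := \sigma^{-1}(n)$; this lies in $O_n$ since $\sigma^{-1} \in G_n = G$. Then $\sigma' := \sigma \cdot t_{i_n,n}^{-1}$ satisfies $\sigma'(n) = \sigma(i_n) = n$, so $\sigma' \in G_{n-1}$. Applying the inductive hypothesis to $\sigma'$ in $G_{n-1}$, with local base $(t_{i_j,j})_{2\leq j\leq n-1}$, produces indices $i_2,\ldots,i_{n-1}$ with $\sigma' = t_{i_2,2}\cdots t_{i_{n-1},n-1}$, and therefore $\sigma = U_{\mathcal T}(i_2,\ldots,i_n)$. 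The base case $n=2$ follows from the orbit--stabilizer principle (or direct inspection), since $G_1 = \{\mathrm{id}\}$ is the stabilizer of $2$ inside $G_2$, so $i_2 \mapsto t_{i_2,2}$ is a bijection from $O_2$ onto $G_2$.

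The main point to verify---which I expect to be the only subtle issue---is that the truncated family $(t_{i_j,j})_{2\leq j\leq n-1}$ is genuinely an $\ell$-local base for $G_{n-1}$, so that the inductive hypothesis really applies. This reduces to the identity $(G_{n-1})_j = G_j$ for every $j \leq n-1$: the condition $\sigma(n) = n$ defining $G_{n-1}$ is already built into the fixing conditions $\sigma(j+1)=j+1,\ldots,\sigma(n)=n$ that define $G_j$. As a consequence, the orbits $O_j$ and the elements $t_{i_j,j}$ are the same whether regarded inside $G$ or inside $G_{n-1}$. Note that the degree bound $\degr(t_{i_j,j}) \leq \ell$ plays no role in this lemma; it will enter only in the transport-entropy estimates that follow.
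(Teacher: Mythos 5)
Your proof is correct and follows essentially the same strategy as the paper: peel off the rightmost factor $t_{i_n,n}$ by observing that $\sigma(i_n)=n$ forces $i_n=\sigma^{-1}(n)\in O_n$, then reduce to $G_{n-1}$ and iterate. The paper phrases this as a direct construction of the inverse map (building $\sigma^{(n-1)},\sigma^{(n-2)},\ldots$ recursively until reaching $G_1=\{\mathrm{id}\}$) rather than as a formal induction with separate injectivity/surjectivity halves, but the mechanics are identical; your explicit check that the truncated family $(t_{i_j,j})_{2\le j\le n-1}$ is a local base of $G_{n-1}$ via $(G_{n-1})_j=G_j$ for $j\le n-1$ is a worthwhile point that the paper leaves implicit.
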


\begin{lemma}\label{lembase} Any  $\ell$-local group of permutations admits  a ``$\ell$-local base''.
\end{lemma}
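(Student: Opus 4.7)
The plan is to construct the family $\mathcal{T}=(t_{i_j,j})$ element by element, using the $\ell$-locality hypothesis applied \emph{inside} the subgroup $G_j$. The key observation is that permutations in $G_j$ have support contained in $[j]$, so the $\ell$-local approximation given by the definition automatically stays in $G_j$.

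Fix $j\in\{2,\ldots,n\}$ and set $t_{j,j}:=id$ (which trivially satisfies the required properties). For $i_j\in O_j$ with $i_j\neq j$, the plan proceeds in three steps. \textbf{Step 1.} By definition of the orbit $O_j$, there exists some $\sigma\in G_j$ with $\sigma(i_j)=j$. Since $\sigma$ fixes $j+1,\ldots,n$ by definition of $G_j$, its support satisfies $\supp(\sigma)\subset[j]$. \textbf{Step 2.} Apply the $\ell$-locality of $G$ to this $\sigma$ and to the pair $(i_j,j)$: since $\sigma\in G\supset G_j$ and $\sigma(i_j)=j$, there exists $\tau\in G$ such that
\[
\supp(\tau)\subset\supp(\sigma), \qquad \degr(\tau)\leq \ell, \qquad \tau(i_j)=j.
\]
\textbf{Step 3.} Combining the two support inclusions yields $\supp(\tau)\subset[j]$, which forces $\tau(k)=k$ for all $k\in\{j+1,\ldots,n\}$, i.e.\ $\tau\in G_j$. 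Define $t_{i_j,j}:=\tau$; all properties required of an $\ell$-local base are then satisfied by construction.

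Since the construction is independent for each index $j$ and each $i_j\in O_j$, it produces the full family $(t_{i_j,j})$. The main (and only) subtle point is Step 3: one must verify that the permutation $\tau$ furnished by the $\ell$-locality definition not only has small degree but actually belongs to the subgroup $G_j$. This is precisely where the inclusion $\supp(\tau)\subset\supp(\sigma)$ in the definition of $\ell$-locality — rather than just a bound on the degree — is used; a hypothetical weaker definition requiring only $\degr(\tau)\leq\ell$ would not suffice because $\tau$ could move some $k>j$. Once this is observed, the lemma follows without further computation.
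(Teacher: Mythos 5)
Your proof is correct and follows essentially the same approach as the paper: the paper simply asserts that each subgroup $G_j$ is $\ell$-local and applies the definition there, while you unpack that assertion by noting that $\supp(\sigma)\subset[j]$ for $\sigma\in G_j$, so the $\tau$ furnished by $\ell$-locality of $G$ satisfies $\supp(\tau)\subset\supp(\sigma)\subset[j]$ and hence lies in $G_j$. Your write-up is slightly more explicit than the paper's one-line argument, but it is the same idea.
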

For completeness, a proof of these two lemmas is given in the Appendix. %It simply follows from the definition of the $\ell$-local property. 

As a consequence of these lemmas, if $G$ is  a $\ell$-local group, then there exists a $\ell$-local base $\mathcal T$, such that the uniform probability measure $\mu_o$ satisfies  
$\mu_o=U_{\mathcal T}\#\hat \mu$, with $\hat \mu=\hat \mu_2\otimes\cdots \otimes \hat \mu_n$, where for each $j$, $\hat \mu_j$ is the uniform law on $O_j$.

As for the symmetric group, given a $\ell$-local base ${\mathcal T}$ of  a group $G$, the class of measures  $\mathcal M=\mathcal M_{\mathcal T}$ on $G$ is made up of all probability measures on $G$ which are pushed forward of product probability measures on $ O_2\times O_3\times \cdots \times O_n$ by the map $U_{\mathcal T}$ defined by \eqref{TT},
\begin{eqnarray}\label{setmeas}
\mathcal{M}_{\mathcal T}:=\big\{U_{\mathcal T}\#\hat \nu,  \hat \nu=\hat \nu_2\otimes\cdots \otimes \hat \nu_n \mbox{ with }  \hat \nu_j\in\P(O_j),\;\forall j\in \{2,\ldots , n\}\big\}.
\end{eqnarray}
 As explained above, if $G$ is a $\ell$-local group, the class $\mathcal M_{\mathcal T}$ contains the uniform law $\mu_o$ on $G$ for a well choosen $\ell$-local base ${\mathcal T}$.

In this paper, the concentration results are derived from weak transport-entropy inequalities, involving the relative entropy $H(\nu|\mu)$ between two probability measures $\mu,\nu$ on $G$ given by 
\[ H(\nu|\mu):=\int \log\left(\frac{d\nu}{d\mu}\right) d\nu,\]
if $\nu$ is absolutely continuous with respect to $\mu$ and $H(\nu|\mu):=+\infty$ otherwise.

The terminology ``weak transport-entropy'' introduced in \cite{GRST16},  encompass many kinds of transport-entropy  inequalities from the well-known Talagrand's transport inequality  satisfied by the standard Gaussian measure on $\R^n$ \cite{Tal96c}, to the usual  Csiz\'ar-Kullback-Pinsker  inequality \cite{Pin64,Csi67,Kul67} that holds for any (reference) probability measure $\mu$ on a Polish metric space $\X$, namely
\begin{eqnarray}\label{CKP}
 \|\mu-\nu\|_{TV}^2\leq  2 \,H(\nu|\mu),\qquad \forall \nu\in\P(\X).
 \end{eqnarray}
 where  $ \|\mu-\nu\|_{TV}$ denotes the total variation distance between $\mu$ and $\nu$, 
 \[\|\mu-\nu\|_{TV}:=2\sup_{A}|\mu(A)-\nu(A)|.\]
 Above, the supremum runs over all measurable subset $A$ of $\X$.
 We refer to  the survey \cite{Sth16, Sam16}  for other examples of  weak transport-entropy inequalities and their connections  with  the concentration of measure  principle.  
  
  The next theorem is one of the main result of this paper. It presents new weak transport inequalities  for the uniform measure on $G$ or any measure  in the class $\mathcal M_{\mathcal T}$, that   recover  the  
concentration results of 
Theorems \ref{permmau} and \ref{tala95}.

We also denote by $d_H$ the Hamming distance on $G$: for any $\sigma,\tau\in G$,
\[d_H(\sigma,\tau):= \degr(\sigma \tau^{-1})=\sum_{i=1}^n \1_{\sigma(i)\neq \tau(i)},\]
 and the distance $d_T(\sigma,\tau)$ is defined as the minimal number of elements of $G$, $ t_1,..., t_k$, with degree less than $\ell$, such that $\sigma t_1\cdots t_k=\tau$.   

For any measures $\nu_1,\nu_2\in \P(G)$, the set $\Pi(\nu_1,\nu_2)$ denotes the set of all probability measures on $G\times G$ with first marginal $\nu_1$ and second marginal $\nu_2$.  The Wasserstein distance between $\nu_1$ and $\nu_2$, according to the distance $d=d_H$ or $d=d_T$, is given by 
\[W_1(\nu_1,\nu_2):=\inf_{\pi\in \Pi(\nu_1,\nu_2)}\iint d(\sigma,\tau)\, d\pi(\sigma,\tau).\] 
We also consider two other optimal weak transport costs,  ${\widetilde T}_2(\nu_2|\nu_1)$ and $\wideparen \T_2(\nu_2|\nu_1)$ defined by 
 \begin{eqnarray}\label{defTtilde}
  {\widetilde T}_2(\nu_2|\nu_1):=\inf_{\pi\in \Pi(\nu_1,\nu_2)}\int \left(\int d(\sigma,\tau) \,dp_\sigma(\tau)\right)^2 d\nu_1(\sigma), 
  \end{eqnarray}
  and 
 \[\wideparen T_2(\nu_2|\nu_1):=\inf_{\pi\in \Pi(\nu_1,\nu_2)}\int \sum_{i=1}^n \left(\int \1_{\sigma(i)\neq \tau(i)} \,dp_\sigma(\tau)\right)^2 d\nu_1(\sigma),\]
 where $p_\sigma$ represents any probability measure such that   $\pi(\sigma,\tau)=\nu_1(\sigma)p_\sigma(\tau)$ for all $\sigma,\tau\in G$.
 By Jensen's inequality, these weak transport costs are comparable, namely 
 \[W_1^2(\nu_1,\nu_2)\leq {\widetilde T}_2(\nu_2|\nu_1)\leq  n \wideparen T_2(\nu_2|\nu_1),\]
 where the last inequality only   holds for $d=d_H$.

By definition a subgroup $G$ of $S_n$ is \textit{normal} if for any $t\in S_n$,$t^{-1}G t=G$.

In the next theorem the constant $K_n$ is the cardinality of the set $\big\{  j\in\{2,\ldots,n\}, O_j\neq \{j\}\big\}$. It follows that $0\leq K_n\leq (n-1)$ and $K_n=0$ if and only if $G=\{id\}$. 
\begin{theorem}\label{transportsymetrique} Let $G$ be a  group of permutations  with  $\ell$-local base $\mathcal T$. Let $\mu\in \P(G)$ be a  measure  of the set $\mathcal M_{\mathcal T}$  defined by \eqref{setmeas}.
\begin{enumerate} 
\item[(a)] For all probability measures  $\nu_1$ and $\nu_2$ on $G$, one has
\begin{eqnarray}\label{TW1}
\frac2{c(\ell)^2} {W}_1^2(\nu_1,\nu_2)\leq K_n\left(\sqrt{H(\nu_1|\mu)}+\sqrt{H(\nu_2|\mu)}\right)^2,
\end{eqnarray}
and
\begin{eqnarray}\label{Ttilde}\frac1{2c(\ell)^2} {\widetilde T}_2(\nu_2|\nu_1)\leq K_n\left(\sqrt{H(\nu_1|\mu)}+\sqrt{H(\nu_2|\mu)}\right)^2,
\end{eqnarray}
where 
\[c(\ell):=\begin{cases}\min(2\ell-1,n) & \mbox{ if } d=d_H,\\
2& \mbox{ if } d=d_T. \end{cases}\]
When $\mu=\mu_o$ is the uniform law of a $\ell$-local group $G$,  inequalities \eqref{TW1} and \eqref{Ttilde}  hold with 
\[c(\ell):=\begin{cases}\ell & \mbox{ if } d=d_H,\\
1& \mbox{ if } d=d_T. \end{cases}\]

\item[(b)] 
\begin{itemize}

\item 
Assume that $\mu=\mu_o$ is the uniform law of a $\ell$-local group $G$.  
Then, for all probability measures  $\nu_1$ and $\nu_2$ on $G$,
\begin{equation}\label{Tparen}
\frac1{2c(\ell)^2}  \wideparen \T_2(\nu_2|\nu_1)\leq \left( \sqrt{H(\nu_1|\mu)} +\sqrt{  H(\nu_2|\mu)}\right)^2,
\end{equation}
with $c(\ell)^2=2(\ell-1)^2+2$.
\item Assume that $G$ is a normal subgroup of $S_n$, 
and  that  $\mu$ satisfies  for all $\sigma\in G,t\in S_n$
\begin{eqnarray}\label{hypoinv}
\mu(\sigma)=\mu(\sigma^{-1})\quad\mbox{ and }\quad \mu(\sigma)=\mu(t^{-1}\sigma t). 
\end{eqnarray}
Then, the inequality \eqref{Tparen} holds 
 with  $c(\ell)^2=8(\ell-1)^2+2$. 
 \end{itemize}
 \end{enumerate}
\end{theorem}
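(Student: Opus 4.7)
My plan is to transfer each inequality from $G$ to a weak transport-entropy inequality for the product measure $\hat\mu = \hat\mu_2 \otimes \cdots \otimes \hat\mu_n$ on $\prod_{k=2}^n O_k$ via the bijection $U_{\mathcal T}$, and then to translate the resulting transport cost back through a geometric comparison of distances.

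The geometric bridge is the following. For $i=(i_k)_k, j=(j_k)_k \in \prod_k O_k$, a telescoping argument writes $U_{\mathcal T}(i)\,U_{\mathcal T}(j)^{-1}$ as a product, indexed by those $k$ with $i_k \neq j_k$, of conjugates of $t_{i_k,k}\,t_{j_k,k}^{-1}$. Since conjugation preserves the degree, and since $t_{i_k,k}, t_{j_k,k} \in G_k$ fix $[n]\setminus[k]$ pointwise and (when non-identity) both contain $k$ in their support, each such factor has degree at most $\min(2\ell-1,k)$. The triangle inequality for $d_H$ then gives $d_H(U_{\mathcal T}(i),U_{\mathcal T}(j)) \leq c(\ell) \sum_{k:O_k\neq\{k\}} \1_{i_k\neq j_k}$ with $c(\ell) = \min(2\ell-1,n)$; the $d_T$ version follows by counting generators. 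When $\mu = \mu_o$, a more symmetric choice of base $\mathcal T$ reduces the degree of each factor to $\ell$ because the uniform marginals allow one to pair off the two halves of the conjugated support.

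For part (a), I combine this geometric bound with the one-dimensional Csiszar-Kullback-Pinsker inequality \eqref{CKP} applied to each marginal $\hat\mu_k$ on $O_k$, which after a triangle inequality reads $\|\hat\nu_1^k - \hat\nu_2^k\|_{TV} \leq \sqrt{2 H(\hat\nu_1^k|\hat\mu_k)} + \sqrt{2 H(\hat\nu_2^k|\hat\mu_k)}$. Assembling TV-optimal couplings conditionally on the preceding coordinates (in the style of the tensorization scheme of \cite{GRST16}) produces a coupling $\hat\pi$ of $\hat\nu_1$ and $\hat\nu_2$ on $(\prod_k O_k)^2$ whose chain-ruled conditional entropies telescope to $H(\nu_i|\mu)$. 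A single Cauchy-Schwarz over the $K_n$ indices with $O_k \neq \{k\}$ introduces the factor $K_n$, yielding \eqref{TW1} directly and \eqref{Ttilde} after first squaring the per-output-pair transport distance and then summing under the expectation.

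Part (b) eliminates the $K_n$ prefactor by exploiting the ``output-index'' structure of $\wideparen T_2$, which sums over $i \in [n]$ rather than over the product coordinate $k$. For fixed $\sigma = U_{\mathcal T}(I)$ and coupling kernel $p_\sigma$, the event $\{\sigma(i)\neq\tau(i)\}$ is contained in $\bigcup_k(\{I_k\neq J_k\}\cap\{i \in S_k(I)\})$, where $S_k(I)$ is the $I$-dependent support of the conjugated single-coordinate swap, of cardinality $\leq 2\ell-1$ (or $\leq\ell$ for the uniform law). A union bound in $k$ followed by Cauchy-Schwarz in the $i$-direction yields $\sum_i (\int \1_{\sigma(i)\neq\tau(i)}\,dp_\sigma)^2 \leq c(\ell)^2 \sum_k q_k(I)^2$ with $c(\ell)^2 = 2(\ell-1)^2+2$, after which the one-dimensional CKP plus the chain rule close the argument without any further Cauchy-Schwarz over $k$. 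For the normal-subgroup case, the invariance hypotheses $\mu(\sigma)=\mu(\sigma^{-1})=\mu(t^{-1}\sigma t)$ permit averaging a generic coupling over the $S_n$-conjugation action on $G$, producing a symmetrized coupling with the same marginals and transport cost; the symmetrization doubles the effective support of each coordinate swap, giving $c(\ell)^2 = 8(\ell-1)^2+2$. The hardest part is carrying out this symmetrized-coupling construction so that the per-output-index quadratic cost remains controlled by the product-side entropies with a prefactor $O(\ell^2)$ rather than $O(\ell^2 K_n)$.
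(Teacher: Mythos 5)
Your geometric bridge in the product space is correct for the generic constants: the telescoping identity
\[
U_{\mathcal T}(i)\,U_{\mathcal T}(j)^{-1}=\prod_{k=2}^{n}\Big(v_{k-1}\,t_{i_k,k}\,t_{j_k,k}^{-1}\,v_{k-1}^{-1}\Big),\qquad v_{k-1}:=t_{j_2,2}\cdots t_{j_{k-1},k-1},
\]
does give $d_H(U_{\mathcal T}(i),U_{\mathcal T}(j))\leq \min(2\ell-1,n)\sum_{k}\1_{i_k\neq j_k}$ and the analogous bound for $d_T$, and combining this with tensorized CKP on $\hat\mu$ yields \eqref{TW1} and \eqref{Ttilde} with $c(\ell)=\min(2\ell-1,n)$ or $2$. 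The paper proves exactly these inequalities in their dual form by a one-coordinate-at-a-time induction over the cosets $H_i=\{\sigma:\sigma(i)=n\}$ (Lemmas \ref{gateau}, \ref{lemmeQtilde}), which is the disintegrated version of your transfer-and-tensorize scheme, so for this part the approaches are essentially equivalent.

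However, there are three genuine gaps. First, the improved constant $c(\ell)=\ell$ (resp.\ $1$) for the uniform law cannot come from ``a more symmetric choice of base $\mathcal T$'': the bijection $U_{\mathcal T}$ is rigid once $\mathcal T$ is fixed, and the factor $t_{i_k,k}t_{j_k,k}^{-1}$ genuinely has degree up to $2\ell-1$. The paper obtains $\ell$ by using, at each level, a \emph{different} coset translate $t_{i,l}\in G$ (not through the pivot $n$) together with the identity $\mu_i(\sigma t_{i,n})=\mu_l(\sigma t_{i,n}t_{i,l}^{-1})$ which holds only for the uniform measure (property \eqref{inducmesbis}). This corresponds to a transport plan between cosets that does not factor through $U_{\mathcal T}\times U_{\mathcal T}$; your pushforward framework cannot see it.

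Second, for part (b) your claim that $\{\sigma(i)\neq\tau(i)\}\subset\bigcup_k(\{I_k\neq J_k\}\cap\{i\in S_k(I)\})$ with $S_k$ depending only on $I$ is not correct as stated: tracing the telescoping, the relevant conjugated supports depend on $j_k$ as well as on the prefix, so $S_k=S_k(I,J)$. Once you acknowledge this, the natural chain ``union bound in $k$, then Cauchy--Schwarz in $i$'' gives $\sum_i a_i^2\leq K_n\sum_k\sum_i(\cdots)^2$, i.e.\ you reintroduce the $K_n$ factor that part (b) is precisely designed to eliminate. The paper's part (b) proof is structurally different: it works with the modified infimum-convolution $Q^j$ carrying an extra factor $2$ on a single output index $j$ (\eqref{defQj}), reduces to the case $j=n$ by conjugation invariance, applies an inversion identity (Lemma \ref{inverse}) to transfer $j=n$ from the output index to the coset index $i=\sigma^{-1}(n)$, and then does a careful three-way splitting of the transport cost (Lemma \ref{lemclef}) into an $\ell^2$ piece, a one-dimensional CKP piece, and an entropy piece; the absence of $K_n$ comes from the fact that at each inductive step one only pays for the new coordinate, never for the accumulated $K_{n-1}$.

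Third, the ``averaging a coupling over the $S_n$-conjugation action'' idea for the normal-subgroup case does not preserve the marginals, because $\nu_1,\nu_2$ are arbitrary and in general not conjugation-invariant --- only $\mu$ is assumed invariant. In the paper the invariance of $\mu$ under conjugation is used to reduce the dual inequality for $Q^j$ to $j=n$ (by pushing forward $\mu$ by $c_{t^{-1}}$ and observing $\mu^{(t)}=\mu$), not to symmetrize couplings of $\nu_1,\nu_2$. The factor $8(\ell-1)^2+2$ arises from the elementary inequality $(u+v)^2\leq 2u^2+2v^2$ applied to $d=\deg(t_{j,n}^{-1}t_{i,n})-1\leq 2\ell-2$ inside Lemma \ref{lemclef}, not from ``doubling the support.'' Closing these three gaps would require reintroducing the coset-by-coset induction and the $Q^j$ machinery, at which point you would essentially be re-deriving the paper's proof.
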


The proofs of these results, given  in the next section, are  inspired by  Talagrand seminal work on $S_n$ \cite{Tal95},  and Luczak-McDiarmid extension to $\ell$-local groups \cite{LD03}. 

 \textbf{ Comments :}
\begin{itemize}
\item If $G=S_n$ and the class of measure $\mathcal M$ is given by \eqref{defM}, the Ewens distribution $\mu^\theta$ introduced before, is an interesting example of measure in $\mathcal M$, satisfying  condition \eqref{hypoinv}. This simply follows from its expression given by \eqref{ewens}, since for any $\sigma,t\in S_n$, $|\sigma^{-1}|=|\sigma|$ and $|t^{-1}\sigma t|=|\sigma|$. 

An open question is to generalize the above transport-entropy  inequalities to the  \textit{ generalized Ewens distribution} (see the definition in \cite{MNZ12,HNNZ13}). This measure  no longer belongs  to  the class of measure $\mathcal M$. In other words, no Chinese restaurant process are known for simulating the generalized Ewens distribution.

%In the proof of the above theorem, the structure of the measure in the class ${\mathcal M}$ is essential.   

\item From the triangular inequality satisfied by the Wasserstein distance $W_1$, the transport-entropy  inequality \eqref{TW1} is clearly equivalent to the following transport-entropy inequality, for all probability measure $\nu $ on $G$, 
\begin{eqnarray*}
\frac2{c(\ell)^2} {W}_1^2(\nu,\mu)\leq K_n\, {H(\nu|\mu)}.
\end{eqnarray*}
Here is  a  popular dual formulation of this transport-entropy  inequality:  for all $1$-Lipschitz functions  $\varphi:G\to\R$ (with respect to  the  distance $d$),
\begin{eqnarray}\label{Hoeff}
\int e^\varphi d\mu\leq e^{\int \varphi\, d\mu+K_n c(\ell)^2  t^2/8},\qquad\forall t\geq 0.
\end{eqnarray}
 For the uniform measure on $S_n$, $K_n=n-1$ and this property is widely commented in  \cite{BHT06}; it  is also a  consequence of Hoeffding inequalities for bounded martingales (see page 18 of \cite{Hoe63}). The  concentration result derived from item (a)  are  of the same nature as the one obtained by the ``bounded differences approach'' in \cite{Mau79, McD89, McD02,LD03,BDR15}.

\item Similarly, by Proposition 4.5 and Theorem 2.7 of \cite{GRST16} and using the identity \[\left(\sqrt u+\sqrt v\right)^2 =\inf_{\alpha\in (0,1)}\left\{\frac u\alpha+\frac v{1-\alpha}\right\},\] we may easily show that the weak transport-entropy inequality \eqref{Ttilde} is equivalent to the following  dual property: 
 for any real function 
$\varphi$ on $G$ and for any 
  $0< \alpha<1$, 
\begin{eqnarray}\label{symtilde}
\left(\int e^ {\alpha  \widetilde Q_{K_n}\varphi} d\mu\right)^{1/\alpha}\left(\int e^{-(1-\alpha)\varphi} d\mu\right)^{1/(1-\alpha)}\leq 1,
\end{eqnarray}
where  the infimum-convolution operator $\widetilde Q_t \varphi$, $t\geq 0$,  is defined by 
\[ \qquad \widetilde Q_t\varphi(\sigma):=\inf_{p\in \P(G)} \left\{  \int \varphi \,dp  +\frac 1{2c^2(\ell)t}  \left(\int d(\sigma,y) \,dp(y)\right)^2 \right\}, \quad \sigma\in G.\]

Moreover, let us observe that following our proof of \eqref{symtilde} in the next section,  for each $\alpha\in (0,1)$  the inequality \eqref{symtilde} can be improved by replacing the square cost function by  the convex cost $c_\alpha(u)\geq u^2/2, u\geq 0$ given in  Lemma \ref{completegraph}. More precisely,  \eqref{symtilde} holds replacing $\widetilde Q_{K_n}\varphi$ by $\widetilde Q^\alpha_{K_n}\varphi$ defined by 
\[\qquad  \widetilde Q^\alpha_t\varphi(\sigma):=\inf_{p\in \P(S_n)} \left\{  \int \varphi \,dp  +t  c_\alpha\left(\frac1{c(\ell)t}\int d(\sigma,y) \,dp(y)\right)^2 \right\},   \]
for any $\sigma\in G,t>0$.
\item  Proposition 4.5 and Theorem 9.5 of \cite{GRST16} also provide a  dual formulation of the weak transport-entropy inequality \eqref{Tparen}:   for any real function 
$\varphi$ on $G$ and for any 
  $0< \alpha<1$, 
\begin{eqnarray}\label{syminf2}
\left(\int e^ {\alpha  \wideparen Q\varphi} d\mu\right)^{1/\alpha}\left(\int e^{-(1-\alpha)\varphi} d\mu\right)^{1/(1-\alpha)}\leq 1,
\end{eqnarray}
where  the infimum convolution operator $\wideparen Q\varphi$  is defined by
\[ \qquad \quad \wideparen Q\varphi(\sigma)=\inf_{p\in \P(G)} \left\{  \int \varphi\,dp  +\frac 1{2c(\ell)^2} \sum_{k=1}^n \left(\int \1_{\sigma(k)\neq y(k)} \,dp(y)\right)^2 \right\}, \quad \sigma\in G.\]
As explained at the end of this section, the property \eqref{syminf2} directly provides the following 
version of the Talagrand's concentration result for any measure on  $G$ of the set $\mathcal M_{\mathcal T}$. 
\begin{corollary}\label{talgen} Let $G$ be a  group of permutations  with $\ell$-local base $\mathcal T$. Let $\mu\in \P(G)$ be a  measure  of the set $\mathcal M_{\mathcal T}$ defined by \eqref{setmeas}. Assume that $\mu$ and $G$ satisfy the conditions of $(b)$ in Theorem  \ref{transportsymetrique}. Then, for all $A \subset G $ and all $\alpha\in(0,1)$, one has  
\[
\int e^{\frac{\alpha}{2c(\ell)^2} f(\sigma,A)}\,d\mu(\sigma) \leq\frac1{\mu(A)^{\alpha/(1-\alpha)}},
\]
with the same definition for $c(\ell)^2$ as in part $(b)$ of Theorem \ref{transportsymetrique}.
As a consequence, by Tchebychev inequality, for any $\alpha\in(0,1)$ and all $t\geq 0$, 
\[\mu\big(\{\sigma\in G,  f(\sigma,A)\geq t\}\big)\leq \frac {e^{-\frac{\alpha t}{2c(\ell)^2}}}{\mu(A)^{\alpha/(1-\alpha)}}.\]
\end{corollary}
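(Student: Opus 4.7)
The plan is to deduce the corollary directly from the dual inequality \eqref{syminf2}, which is available under the hypotheses of Theorem \ref{transportsymetrique}(b). The strategy is the standard one for turning a dual infimum-convolution inequality into a Talagrand-type convex-hull concentration estimate: choose a function $\varphi$ which is the ``$0/+\infty$'' indicator of the set $A$, so that the right-hand factor in \eqref{syminf2} collapses to a power of $\mu(A)$ and the operator $\wideparen Q \varphi$ is forced to reproduce Talagrand's convex-hull quantity $f(\sigma,A)$.

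More precisely, I would set $\varphi(y):=0$ if $y\in A$ and $\varphi(y):=+\infty$ otherwise. Then $\int e^{-(1-\alpha)\varphi}\,d\mu=\mu(A)$. Because $\varphi\equiv+\infty$ on $G\setminus A$, only probability measures $p\in\P(A)$ contribute to the infimum defining $\wideparen Q\varphi(\sigma)$; for such $p$ the integral $\int\varphi\,dp$ vanishes and
\[
\wideparen Q\varphi(\sigma)=\frac1{2c(\ell)^2}\inf_{p\in\P(A)}\sum_{k=1}^n\left(\int \1_{\sigma(k)\neq y(k)}\,dp(y)\right)^2.
\]
The key identification is then that the vector $x=(x_k)_{k\in\Omega}$ with $x_k=\int\1_{\sigma(k)\neq y(k)}\,dp(y)$ runs precisely over $\mathrm{conv}(V(\sigma,A))$ as $p$ ranges over $\P(A)$, by the very definition of $V(\sigma,A)$ and its convex hull recalled in the introduction. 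This yields
\[
\wideparen Q\varphi(\sigma)=\frac1{2c(\ell)^2}\inf_{x\in\mathrm{conv}(V(\sigma,A))}\|x\|_2^2=\frac1{2c(\ell)^2}\,f(\sigma,A).
\]

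Plugging these two identifications into \eqref{syminf2}, raising to the power $\alpha$, and rearranging the factor $\mu(A)^{\alpha/(1-\alpha)}$ to the right-hand side gives exactly
\[
\int e^{\frac{\alpha}{2c(\ell)^2}\,f(\sigma,A)}\,d\mu(\sigma)\le \frac1{\mu(A)^{\alpha/(1-\alpha)}},
\]
which is the first assertion of the corollary. The tail bound is then a one-line Markov/Tchebychev estimate: for $t\ge 0$,
\[
\mu\bigl(\{\sigma:f(\sigma,A)\ge t\}\bigr)\le e^{-\frac{\alpha t}{2c(\ell)^2}}\int e^{\frac{\alpha}{2c(\ell)^2}\,f(\sigma,A)}\,d\mu(\sigma)\le \frac{e^{-\frac{\alpha t}{2c(\ell)^2}}}{\mu(A)^{\alpha/(1-\alpha)}}.
\]

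There is no serious obstacle in this derivation once Theorem \ref{transportsymetrique}(b) and its dual form \eqref{syminf2} are in hand; the only mildly delicate point is the use of the value $+\infty$ for $\varphi$, which is handled by restricting to $p\in\P(A)$ and noting that with this convention both sides of \eqref{syminf2} remain meaningful (the left factor is finite since the infimum in $\wideparen Q\varphi(\sigma)$ is attained on $\P(A)$, and the right factor is $\mu(A)^{1/(1-\alpha)}$). If one wishes to avoid infinite values entirely, the same argument goes through by taking $\varphi_M:=M\cdot\1_{A^c}$ and letting $M\to+\infty$, using monotone convergence on the left and dominated convergence on the right.
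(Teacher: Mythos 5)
Your proof is correct and follows essentially the same route as the paper: apply the dual inequality \eqref{syminf2} to (a truncation of) the indicator-type potential vanishing on $A$, and identify $\wideparen Q\varphi(\sigma)$ with $\tfrac{1}{2c(\ell)^2}f(\sigma,A)$ via the observation that $p\mapsto\bigl(\int\1_{\sigma(k)\neq y(k)}\,dp(y)\bigr)_k$ parametrises $\mathrm{conv}(V(\sigma,A))$ when $p$ ranges over $\P(A)$. One small point worth tightening: since \eqref{syminf2} is stated for real-valued $\varphi$, the truncation $\varphi_M=M\1_{A^c}$ is the version you actually need; your "monotone convergence on the left'' gives existence of $\lim_M\wideparen Q\varphi_M(\sigma)$ and the upper bound by $\psi(1,\sigma)=\tfrac{1}{2c(\ell)^2}f(\sigma,A)$ (by restricting to $p\in\P(A)$), but the matching lower bound $\lim_M\wideparen Q\varphi_M(\sigma)\geq\psi(1,\sigma)$ requires an additional argument --- the paper decomposes $\wideparen Q\varphi_\lambda(\sigma)=\inf_{\beta\in[0,1]}\{\lambda(1-\beta)+\psi(\beta,\sigma)\}$ and uses continuity of $\psi(\cdot,\sigma)$ on $[0,1]$ to conclude. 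This is a minor gap, easily filled, and does not change the structure of your argument.
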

For $\alpha=1/2$ and  $\mu=\mu_o$  the uniform law on a $\ell$-local group of $G$,  this result is exactly Theorem 2.1 by Luczak-McDiarmid \cite{LD03}, that generalizes Theorem \ref{tala95} on $S_n$ (since $S_n$  is a 2-local group). 
\end{itemize}

By projection arguments,  Theorem \ref{transportsymetrique} applied with the uniform law $\mu_o$ on the symmetric group $S_n$, also provides transport-entropy inequalities for the uniform law on the slices of the discrete cube $\{0,1\}^n$. Namely, for $n\geq 1$, let us denote by $\X_{k,n-k}$,  $k\in\{0,\dots,n\}$, \textit{the slices of discrete cube}  defined by 
\[\X_{k,n-k}:=\left\{ x=(x_1,\ldots,x_n)\in\{0,1\}^n, \sum_{i=1}^n x_i=k\right\}.\] 
The uniform law on $\X_{k,n-k}$, denoted by $\mu_{k,n-k}$,  is the pushed forward of $\mu_o$ by the projection map 
\[\begin{array}{cccl}&S_n&\to& \X_{k,n-k}\\
P:&\sigma &\mapsto &\1_{\sigma([k])},\end{array}\]
where $\sigma([k]):=\{\sigma(1),\ldots,\sigma(k)\}$ and for any subset $A$ of $[n]$, $\1_A$ is the vector with coordinates $\1_A(i),i\in [n]$. 
In other terms, $\mu_{k,n-k}=P\#\mu_o$ and $\mu_{k,n-k}(x)={\binom{n}{k}}^{-1}$
for all $x\in \X_{k,n-k}$.
Let $d_h$ denotes the \textit{Hamming distance on $\X_{k,n-k}$} defined by
\[d_h(x,y):=\frac12\sum_{i=1}^n \1_{x_i\neq y_i},\qquad x,y\in \X_{k,n-k}.\]
\begin{theorem}\label{transportslice} Let  $\mu_{k,n-k}$ be the uniform law on $\X_{k,n-k}$, a slice of the discrete cube.
\begin{enumerate} 
\item[(a)] For all probability measures  $\nu_1$ and $\nu_2$ on $\X_{k,n-k}$,
\begin{eqnarray*}
\frac2{C_{k,n-k}} {W}_1^2(\nu_1,\nu_2)\leq \left(\sqrt{H(\nu_1|\mu_{k,n-k})}+\sqrt{H(\nu_2|\mu_{k,n-k})}\right)^2,
\end{eqnarray*}
and
\begin{eqnarray*}
\frac1{2C_{k,n-k}} {\widetilde T}_2(\nu_2|\nu_1)\leq \left(\sqrt{H(\nu_1|\mu_{k,n-k})}+\sqrt{H(\nu_2|\mu_{k,n-k})}\right)^2,
\end{eqnarray*}
where $W_1$ is the Wasserstein distance associated to $d_h$, ${\widetilde T}_2$ is the weak optimal transport cost defined by \eqref{defTtilde} with $d=d_h$, and 
$\C_{k,n-k}=\min(k,n-k)$.

\item[(b)] 
For all probability measures  $\nu_1$ and $\nu_2$ on $\X_{k,n-k}$,
 \begin{equation}\label{That}
\frac1{8}  \widehat \T_2(\nu_2|\nu_1)\leq \left( \sqrt{H(\nu_1|\mu_{k,n-k})} +\sqrt{  H(\nu_2|\mu_{k,n-k})}\right)^2,
\end{equation}
where  
\begin{eqnarray*}
  {\widehat T}_2(\nu_2|\nu_1):=\inf_{\pi\in \Pi(\nu_1,\nu_2)}\int \sum_{i=1}^n \left(\int \1_{x_i\neq y_i} dp_x(y)\right)^2 d\nu_1(x), 
  \end{eqnarray*}
with $\pi(x,y)= \nu_1(x) p_x(y)$ for all $x,y\in \X_{k,n-k}$.
\end{enumerate}
\end{theorem}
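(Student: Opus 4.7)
My plan is to lift each $\nu\in\mathcal P(\mathcal X_{k,n-k})$ to a measure $\tilde\nu\in\mathcal P(S_n)$ that is uniform on each fibre of the projection $P$, apply Theorem \ref{transportsymetrique} to the lifts on $S_n$ (which is $2$-local, so $c(\ell)^2=4$), and then project the resulting transport costs back down to the slice. Concretely, I would set
\[\tilde\nu(\sigma)=\frac{\nu(P(\sigma))}{k!(n-k)!},\qquad \sigma\in S_n,\]
and exploit that $\mu_o$ is itself uniform on each fibre $P^{-1}(x)$ of size $k!(n-k)!$; this gives $P\#\tilde\nu=\nu$ and, by a direct computation, $H(\tilde\nu|\mu_o)=H(\nu|\mu_{k,n-k})$. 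Any coupling $\pi\in\Pi(\tilde\nu_1,\tilde\nu_2)$ then projects to $\bar\pi=(P\otimes P)\#\pi\in\Pi(\nu_1,\nu_2)$.

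\textbf{Part (b).} I would apply Theorem \ref{transportsymetrique}(b) to the lifts to get $\tfrac18\wideparen T_2(\tilde\nu_2|\tilde\nu_1)\le(\sqrt{H(\nu_1|\mu_{k,n-k})}+\sqrt{H(\nu_2|\mu_{k,n-k})})^2$, and reduce to showing $\widehat T_2(\nu_2|\nu_1)\le\wideparen T_2(\tilde\nu_2|\tilde\nu_1)$. For a fixed coupling $\pi$ on $S_n$ write
\[q^j_\sigma=\int\1_{P(\sigma)_j\neq P(\tau)_j}\,dp_\sigma(\tau),\qquad r^i_\sigma=\int\1_{\sigma(i)\neq\tau(i)}\,dp_\sigma(\tau).\]
Jensen applied to the uniform conditional on each fibre dominates the $\widehat T_2$-cost of $\bar\pi$ by $\int\sum_j(q^j_\sigma)^2\,d\tilde\nu_1(\sigma)$. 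The crucial pointwise bound is $q^j_\sigma\le r^{\sigma^{-1}(j)}_\sigma$: setting $i=\sigma^{-1}(j)$, if $i\le k$ then $\sigma(i)=j\in\sigma([k])$, so the event $\{j\notin\tau([k])\}$ forces $\tau^{-1}(j)>k$ and hence $\tau(i)\neq j$; the case $i>k$ is symmetric. Since $j\mapsto\sigma^{-1}(j)$ is a bijection of $[n]$, squaring and summing yields $\sum_j(q^j_\sigma)^2\le\sum_i(r^i_\sigma)^2$ pointwise, which completes the projection and proves part (b).

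\textbf{Part (a).} The naive route is to apply part (a) of Theorem \ref{transportsymetrique} to the lifts with $d=d_H$ and use the pointwise bound $2d_h\circ(P\otimes P)\le d_H$; this yields the correct shape of inequality but with the suboptimal constant $K_n=n-1$ in place of $\min(k,n-k)$. A better approach combines part (b) above with Cauchy-Schwarz on the slice: for each $x$, the identity $|x\setminus y|=|y\setminus x|$ (valid because $|x|=|y|=k$) gives
\[\sum_{j\in x}q^j_x=\sum_{j\notin x}q^j_x=\int d_h(x,y)\,d\bar p_x(y),\]
and applying Cauchy-Schwarz separately on the $k$ terms with $j\in x$ and the $n-k$ terms with $j\notin x$ yields $\sum_j(q^j_x)^2\ge(\int d_h\,d\bar p_x)^2(\tfrac1k+\tfrac1{n-k})$. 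Integrating in $\nu_1$ and using Jensen once more produces $\widehat T_2(\nu_2|\nu_1)\ge W_1^2(\nu_1,\nu_2)\cdot n/(k(n-k))$, which combined with part (b) gives an inequality of the desired shape with a constant of order $\min(k,n-k)$.

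\textbf{Expected difficulty.} The main obstacle will be recovering the sharp constant $\min(k,n-k)/2$ in part (a): the Cauchy-Schwarz route above inherits the factor $c(\ell)^2=4$ from part (b), so it matches the correct dependence on $k$ and $n$ only up to absolute constants. Getting the precise factor $2/\min(k,n-k)$ will likely require either a direct sequential construction of the $k$-subset (a Chinese-restaurant chain of length $\min(k,n-k)$, after using the $x\leftrightarrow 1-x$ symmetry to assume $k\le n-k$, giving tensorization with exactly $\min(k,n-k)$ steps), or a more refined coupling exploiting the right-invariance of $d_H$ under $S_k\times S_{n-k}$ in the lift rather than the naive averaging over the whole fibre.
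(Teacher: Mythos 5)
Your part (b) is correct and is essentially the paper's argument in primal form. The paper works dually, showing $\wideparen Q(f\circ P)(\sigma)\geq \widehat Qf(P(\sigma))$ and applying this to \eqref{syminf2}; your lift $\tilde\nu(\sigma)=\nu(P(\sigma))/(k!(n-k)!)$ together with the pointwise bound $q^j_\sigma\leq r^{\sigma^{-1}(j)}_\sigma$ (and Jensen on the fibre disintegration of $\bar\pi$) carries out the same projection in coupling language. Your identity $H(\tilde\nu|\mu_o)=H(\nu|\mu_{k,n-k})$ is right, the bijection $j\mapsto\sigma^{-1}(j)$ converts the sum correctly, and the constant $c(\ell)^2=2(\ell-1)^2+2=4$ for $\ell=2$ matches the factor $8$.

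Part (a) is where the gap sits, and you have essentially flagged it yourself. Your explicit Cauchy--Schwarz bootstrap gives, for any coupling, $\sum_j(q^j_x)^2\geq\tfrac{n}{k(n-k)}\bigl(\int d_h\,d\bar p_x\bigr)^2$ and hence $\widehat T_2(\nu_2|\nu_1)\geq\tfrac{n}{k(n-k)}\widetilde T_2(\nu_2|\nu_1)$; combined with your part (b) this yields $\widetilde T_2\leq\tfrac{8k(n-k)}{n}\bigl(\sqrt{H_1}+\sqrt{H_2}\bigr)^2$. Since $\tfrac{k(n-k)}{n}\geq\tfrac12\min(k,n-k)$ with equality only when $k=n-k$, the constant you obtain, $8k(n-k)/n$, lies strictly between $4\min(k,n-k)$ and $8\min(k,n-k)$, so it is off by a factor of at least $2$ (and up to $4$) from the claimed $2\min(k,n-k)$ in the $\widetilde T_2$ bound, and by a factor of at least $8$ from the claimed $\tfrac12\min(k,n-k)$ in the $W_1^2$ bound. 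The ``direct sequential construction'' you speculate about at the end is indeed what the paper does, but it is not a cosmetic refinement of the Cauchy--Schwarz route --- it is a separate induction that must be carried out: one decomposes $\X_{k,n-k}$ into the $n(n-1)$ pieces $\Omega^{i,j}=\{x: x_i=0,\,x_j=1\}$, each isomorphic to $\X_{k-1,n-k-1}$, uses the triangle bound $d_h(x,y)\leq d_h^{i,j}(x,s_{(i,j),(l,m)}(y))+2$ to prove a key lemma (the slice analogue of Lemmas 2.1 and 2.3), and feeds into the two-point tensorization (\eqref{Pinskerdual} and Lemma \ref{completegraph}) via $C_{k,n-k}=C_{k-1,n-k-1}+1$. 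Without that induction --- or some other tensorization that really does terminate after $\min(k,n-k)$ steps --- the sharp constant in (a) is not reached.
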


Up to constants, the weak transport inequality \eqref{That} is the stronger one since
for all $\nu_1,\nu_2\in \P(\X_{k,n-k})$,
\[W_1^2(\nu_1,\nu_2)\leq  {\widetilde T}_2(\nu_2|\nu_1)\leq \frac n4 \, {\widehat T}_2(\nu_2|\nu_1).\]
The proof of Theorem \ref{transportslice} is given in section \ref{sectionslice}. The transport-entropy inequality \eqref{That} is derived by projection from  the transport-entropy inequality  \eqref{Tparen} for the uniform measure $\mu_o$ on $S_n$. The same projection argument  could be used to reach the results of (a) from the transport-entropy inequality of (a) in Theorem \ref{transportsymetrique}, but it provides  worse constants. The constant $C_{k,n-k}$  is obtained by working directly on $\X_{k,n-k}$ and following  similar arguments as in the proof of Theorem \ref{transportsymetrique}.

\vspace{0,3 cm}
 \textbf{ Remark :}
The results of Theorem \ref{transportslice} also extend to the multinomial law. Let $E=\{e_1,\ldots,e_m\}$ be a set of cardinality $m$ and let $k_1,\ldots, k_m$ be a collection of non-zero integers satisfying $k_1+\cdots+k_m=n$. \textit{The multinomial law} $\mu_{k_1,\ldots, k_m}$ is by definition the uniform law on the set 
\[\X_{k_1,\ldots, k_m}:=\bigg\{x\in E^n, \mbox{ such that  for all } l\in[m], \#\big\{i\in[n], x_i=e_l\big\}=k_l \bigg\}.\]
For any $x\in \X_{k_1,\ldots, k_m}$, one has $\mu_{k_1,\ldots, k_m}(x)=\frac{k_1!\cdots k_m!}{n!}$.
 As a result, the weak transport-entropy  inequality \eqref{That} holds on $\X_{k_1,\ldots, k_m}$ replacing the measure $\mu_{k,n-k}$ by the measure $\mu_{k_1,\ldots, k_m}$. The proof of this result  is a simple generalization of the one on $\X_{k,n-k}$, by using the projection map $P:S_n\to \X_{k_1,\ldots, k_m}$ defined by: $P(\sigma)=x$ if and only if
 \[x_i=e_l,\quad \forall l\in [m], \,\forall i\in J_l,\]
 where $J_l:=\big  \{i\in [n], k_0+\cdots+k_{l-1}<  i\leq k_0+\cdots+k_{l}\big\}$, with $k_0=0$. The details of this proof are  left to the reader.

A straightforward application of transport-entropy inequalities  is deviation's bounds for different classes of functions. For more comprehension, we  present below  deviations bounds that can be reached from Theorem \ref{transportsymetrique} for any measure in ${\mathcal M}_{\mathcal T}$. A similar  corollary can be derived from Theorem \ref{transportslice} on the slices of the discrete cube. 

For any $h:G\to \R$, the mean of $h$ is denoted by $\mu(h):=\int h \,d\mu$.
 
\begin{corollary}{}\label{cordev} Let $G$ be a  group of permutations  with  $\ell$-local base $\mathcal T$, $G\neq \{id\}$. Let $\mu\in \P(G)$ be a  measure  of the set $\mathcal M_{\mathcal T}$  defined by \eqref{setmeas}. Let $g$ be a real function on $G$.
\begin{enumerate}
\item[(a)]
Assume that there exists a function $\beta:G\to\R^+$  such that for all   $\tau,\sigma\in G$, 
\[g(\tau)-g(\sigma)\leq  \beta(\tau) d(\tau,\sigma),\]
where $d=d_T$ or $d=d_H$.
Then  for all  $u\geq 0$, one has 
\[\mu\left(g\geq  \mu(g)+u\right)\leq \exp\left( -\,\frac{2 u^2}{ K_n c(\ell)^2\,\sup_{\sigma\in G}  \beta(\sigma)^2 }\right).\]
and
\[\mu\left(g\leq \mu( g)-u\right)\leq \exp\left( -\,\frac{2 u^2}{ K_n c(\ell)^2\,\min(\sup_{\sigma\in G}\beta(\sigma)^2, 4\mu(\beta^2))  }\right),\]
where the constants $c(\ell)$ and $K_n$ are defined as in  part $(a)$ of Theorem \ref{transportsymetrique}.

\item[(b)] Assume that $\mu$ and $G$ satisfy the conditions of $(b)$ in Theorem  \ref{transportsymetrique}. Let $g$ be a so-called  \textit{ configuration function}. This  means  that
 there exist  functions $\alpha_k:G\to\R^+$, $k\in\{1,\ldots,n\}$ such that for all   $\tau,\sigma\in G$, 
\[g(\tau)-g(\sigma)\leq \sum_{k=1}^n \alpha_k(\tau) \1_{\tau(k)\neq \sigma(k)}.\]

Then, for all  $v\geq 0, \lambda\geq 0$, one has 
\[\mu\left(g\geq  \mu(g)+v +\frac{\lambda c(\ell)^2 |\alpha|_2}{2}\right)\leq e^{-\lambda v},\]
and for all  $u\geq 0$,
\[\mu\left(g\leq \mu( g)-u\right)\leq \exp\left( -\,\frac{u^2}{2 c(\ell)^2\mu\left(  |\alpha|_2^2\right) }\right),\]
where $\displaystyle|\alpha(\sigma)|_2^2:=\sum_{k=1}^n \alpha^2_k(\sigma)$ and  $c(\ell)$ is defined as in  part $(b)$ of Theorem~\ref{transportsymetrique}.
We also have,  for all  $u\geq 0$
\[\mu\left(g\geq  \mu(g)+u\right)\leq \exp\left( -\,\frac{u^2}{ 2 c(\ell)^2\,\sup_{\sigma\in G}  |\alpha(\sigma)|_2^2 }\right),\]
and if there exists $M\geq 0$ such that $|\alpha|_2^{2}\leq M g$, then for all  $u\geq 0$
\[\mu\left(g\geq  \mu(g)+u\right)\leq \exp\left( -\,\frac{u^2}{ 2 c(\ell)^2 M(\mu(g)+u) }\right),\]
\end{enumerate}
\end{corollary}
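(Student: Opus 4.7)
The overall strategy is Marton's coupling argument. For every tail event $A$ I introduce the conditional measure $\nu_A := \mu(\,\cdot \cap A)/\mu(A)$, for which $H(\nu_A|\mu)=\log(1/\mu(A))$, and then apply the relevant transport-entropy inequality from Theorem~\ref{transportsymetrique} with $\nu_1=\mu,\nu_2=\nu_A$ (or the reverse). This bounds the optimal transport cost between $\mu$ and $\nu_A$ by a multiple of $\log(1/\mu(A))$. On the other hand, the Lipschitz/configuration hypothesis on $g$ lets me control $|\nu_A(g)-\mu(g)|\ge u$ from above by the same transport cost. Combining the two estimates produces the desired tail bound.

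For part~(a), with the Lipschitz bound $g(\tau)-g(\sigma)\leq\beta(\tau)d(\tau,\sigma)$: for the upper tail $A=\{g\ge\mu(g)+u\}$ I apply \eqref{TW1} and the elementary estimate $u\le\nu_A(g)-\mu(g)\le(\sup\beta)\,W_1(\mu,\nu_A)$, which gives the first bound directly. For the lower tail $B=\{g\le\mu(g)-u\}$ I apply \eqref{Ttilde} with the coupling $\pi(\sigma,\tau)=\mu(\sigma)p_\sigma(\tau)$ and use the Lipschitz inequality in the form $g(\sigma)-g(\tau)\le\beta(\sigma)d(\sigma,\tau)$, so that
\[u\le \int \beta(\sigma)\Bigl(\int d(\sigma,\tau)\,dp_\sigma(\tau)\Bigr)d\mu(\sigma)\le \sqrt{\mu(\beta^2)}\,\sqrt{\widetilde T_2(\nu_B|\mu)}\]
by Cauchy-Schwarz in $\sigma$; this yields the $\mu(\beta^2)$-form, and the $\sup\beta^2$-form comes from using \eqref{TW1} in place of \eqref{Ttilde}.

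For part~(b), I use \eqref{Tparen} in the appropriate direction. For the sub-Gaussian upper tail bound $\exp(-u^2/(2c(\ell)^2\sup|\alpha|_2^2))$ I apply \eqref{Tparen} with $\nu_1=\nu_A,\nu_2=\mu$, invoke the symmetric form $g(\tau)-g(\sigma)\le\sum_k\alpha_k(\sigma)\1_{\sigma(k)\ne\tau(k)}$ of the configuration inequality (valid by swapping roles in the definition), apply Cauchy-Schwarz in $k$ to extract $|\alpha(\sigma)|_2$, and Cauchy-Schwarz in $\sigma$ to recognize the $\wideparen T_2$-cost:
\[u\le \sqrt{\nu_A(|\alpha|_2^2)}\,\sqrt{\wideparen T_2(\mu|\nu_A)}\le\sqrt{\sup|\alpha|_2^2}\,\sqrt{2c(\ell)^2\log(1/\mu(A))}.\]
For the lower tail I instead apply \eqref{Tparen} with $\nu_1=\mu,\nu_2=\nu_B$ and carry out the same computation against $d\mu$, which replaces $\sup|\alpha|_2^2$ by $\mu(|\alpha|_2^2)$. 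The exponential Markov form $\mu(g\ge\mu(g)+v+\lambda c(\ell)^2\sup|\alpha|_2^2/2)\le e^{-\lambda v}$ is then deduced from the sub-Gaussian bound by the standard identity $(v+\lambda\sigma^2/2)^2/(2\sigma^2)\ge\lambda v$.

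The main technical point is the self-bounding inequality in the last part of (b): starting from $u^2\le 2c(\ell)^2 M\nu_A(g)\log(1/\mu(A))$ (obtained as above using $|\alpha|_2^2\le Mg$), I need to trade $\nu_A(g)$ for $\mu(g)+u$. The idea is to write $\nu_A(g)=\mu(g)+u+\Delta$ with $\Delta\ge 0$ measuring the conditional excess, bound $\Delta$ by integrating the tail estimate against itself (a bootstrap that is standard in Bernstein-type arguments), and then solve the resulting quadratic inequality for $u$; this is the only step that is not a direct Cauchy-Schwarz/Marton computation, and all other inequalities follow by the routine choices of coupling direction indicated above.
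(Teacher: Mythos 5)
Your Marton-coupling route is a genuinely different derivation from the paper's. The paper works entirely on the dual side: it inserts the Lipschitz or configuration hypothesis directly into the infimum-convolution operators $\wideparen Q(\lambda g)$, $\widetilde Q_{K_n}(\lambda g)$, extracts exponential-moment bounds from \eqref{syminf2}, \eqref{symtilde}, \eqref{Hoeff} in the limits $\alpha\to0$ and $\alpha\to1$, and finishes with Chebyshev. Your primal argument --- condition on the tail event $A$, use $H(\nu_A|\mu)=\log(1/\mu(A))$, bound the transport cost via Theorem~\ref{transportsymetrique}, and control $|\nu_A(g)-\mu(g)|$ by two Cauchy--Schwarz steps once $\pi$ is disintegrated with the $\alpha$-bearing (respectively $\beta$-bearing) variable outermost --- correctly delivers both inequalities of part (a), the sub-Gaussian upper tail of (b), and the lower tail of (b). Note in passing that the ``symmetric form'' you quote has a sign slip: what is valid by relabelling is $g(\sigma)-g(\tau)\le\sum_k\alpha_k(\sigma)\1_{\sigma(k)\ne\tau(k)}$, which is exactly what you need once $\nu_1=\nu_A$.

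Two items of (b) are not correctly obtained by the route you describe, though both remain within reach of your framework. First, the exponential Markov bound has the random quantity $|\alpha(\sigma)|_2^2$ \emph{inside the event} (read $|\alpha|_2^2$ for the misprinted $|\alpha|_2$); deducing it ``from the sub-Gaussian bound by the standard identity'' only yields the strictly weaker statement with the deterministic $\sup_\sigma|\alpha(\sigma)|_2^2$ in place of $|\alpha(\sigma)|_2^2$. To keep the self-normalized form, leave $a:=\sqrt{\nu_A(|\alpha|_2^2)}$ unresolved: the definition of $A$ gives $v+\tfrac{\lambda c(\ell)^2 a^2}{2}\le\nu_A(g)-\mu(g)$, the transport estimate gives $\nu_A(g)-\mu(g)\le a\sqrt{2c(\ell)^2 L}$ with $L=\log(1/\mu(A))$, and the inequality $a\sqrt{2c(\ell)^2 L}\le\tfrac{\lambda c(\ell)^2 a^2}{2}+\tfrac{L}{\lambda}$ cancels the $a^2$-terms and leaves $\lambda v\le L$, as required. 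Second, no bootstrap is needed for the Bernstein bound: from $u\le\nu_A(g)-\mu(g)\le\sqrt{2c(\ell)^2 M\,\nu_A(g)\,L}$, observe that the convex quadratic $x\mapsto(x-\mu(g))^2-2c(\ell)^2 M L\,x$ is nonpositive at $x=\mu(g)$ and at $x=\nu_A(g)\ge\mu(g)+u$, hence also at the intermediate point $x=\mu(g)+u$, which reads $u^2\le 2c(\ell)^2 M(\mu(g)+u)L$ and rearranges to the claimed inequality with no iteration. With those two repairs the Marton route gives the full corollary, and it is instructive that the dual (paper's) and primal (yours) arguments both reproduce all constants, including the $4\mu(\beta^2)$ in (a).
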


 \textbf{ Comments and examples:}
\begin{itemize}
\item The above deviation's bounds of $g$  around its  mean $\mu(g)$ are directly derived from the dual representations \eqref{Hoeff},\eqref{symtilde},\eqref{syminf2} of the transport-entropy inequalities of Theorem \ref{transportsymetrique}, when $\alpha$ goes to 0 or $\alpha$ goes to 1.
By classical arguments (see \cite{Led01}),  Corollary \ref{cordev} also implies   deviation's bounds around a median $M(g)$ of $
g$, but  we loose in the constants with this procedure. However, starting directly from Corollary \ref{talgen},  we get the following  bound under the assumption of $(b)$: for all  $u\geq 0$,
\begin{eqnarray}\label{devmed}
\mu(g\geq M(g)+u)\leq \frac12 \exp\left(-w\left(\frac u{\sqrt 2 c(\ell) \sup_{\sigma\in G}|\alpha(\sigma)|_2}\right)\right),
\end{eqnarray}
where $w(u)= u( u-2\sqrt{\log 2})$,    $u\geq 0$.

The idea of the proof is to choose the set $A=\{\sigma\in G, g(\sigma)\leq M(g)\}$ of measure $\mu(A)\geq 1/2$ and to show that the asumption of $(b)$ implies 
\[ \qquad\big\{\sigma\in G, f(\sigma,A)<t\big\}\subset  \left\{\sigma\in G, g(\sigma)<M(g)+t\sup_{\sigma\in G}|\alpha(\sigma)|_2\right\},\quad t\geq 0.\]
Then, the deviation bound above the median directly follows from Corollary \ref{talgen} by optimizing over all $\alpha\in(0,1)$.
With identical arguments,  the same bound can be reached for    $\mu(g\leq  M(g)-u)$.

\item In (a), the bound above the mean is a  simple consequence of \eqref{Hoeff}. As settled in (a), this bound also holds for the deviations under the mean, and it  can be   slightly improved  by replacing   $ \sup_{\sigma\in G}  \beta(\sigma)^2$ by 
$4 \mu(\beta^2)$. This small improvement is a consequence of the weak transport inequality with stronger cost $\widetilde T_2$. The same kind of improvement could be  reached for the deviations above the mean under additional Lipschitz regularity conditions on the function $\beta$.  
\item Let $\varphi:[0,1]^n\to\R$ be a 1-Lipschitz convex function and let $x=(x_1,\ldots,x_n)$ be a fixed vector of $[0,1]^n$. For any $\sigma\in G$, let $x_\sigma:=(x_{\sigma(1)},\ldots, x_{\sigma(n)})$.
By applying the results of  $(b)$ (or even \eqref{devmed}) to the particular function $g_x(\sigma)=\varphi(x_\sigma)$, $\sigma\in G$,  we recover and extend to any group $G$ with $\ell$-local base $\mathcal T$  and to any measure in $\mathcal M_{\mathcal T}$ satisfying \eqref{hypoinv},  the deviation inequality by  Adamczak, Chafa\"{\i} and Wolff  \cite{ACW14} (Theorem 3.1) obtained from Theorem \ref{tala95} by Talagrand. Namely, since for any $\sigma,\tau\in G$, 
\[\varphi(x_\tau)-\varphi(x_\sigma)\leq \sum_{k=1}^n \partial_k \varphi(x_{\tau})(x_{\tau(k)}-x_{\sigma(k)})\leq \sum _{k=1}^n |\partial_k \varphi(x_\tau) |\1_{\tau(k)\neq \sigma(k)}, \]
with $\sum_{k=1}^n |\partial_k \varphi(x_\tau) |^2=|\nabla \varphi(x_\tau) |^2\leq 1$, 
 Corollary  \ref{cordev} implies, for any choice of vector $x=(x_1,\ldots,x_n)\in [0,1]^n$,
 \[\mu(|g_x- \mu(g_x)| \geq u )\leq 2\exp\left( -\,\frac{u^2}{ 2 c(\ell)^2  }\right), \quad u\geq 0.\]
This concentration property on $S_n$ (with $\ell=2$) plays a key role  in the approach by Adamczak and al. \cite{ACW14}, to study  the convergence of the  empirical spectral measure of random matrices with exchangeable entries, when the size of the matrices is increasing. 
\item As a second example, for any $t$ in a finite set $\mathcal F$, let  $(a_{i,j}^t)_{1\leq i,j\leq n}$ be a collection of non negative real numbers and consider the function 
\[g(\sigma)= \sup_{t\in \mathcal F} \left(\sum_{k=1}^n a^t_{k,\sigma(k)}\right),\qquad \sigma\in G.\]
This function satisfies, for any $\sigma, \tau \in G$, 
\[g(\tau)-g(\sigma)\leq \sum_{k=1}^n \left(a^{t(\tau)}_{k,\tau(k)}-a^{t(\tau)}_{k,\sigma(k)}\right) \1_{\tau(k)\neq \sigma(k)}\leq \sum_{k=1}^na^{t(\tau)}_{k,\tau(k)}\1_{\tau(k)\neq \sigma(k)},\]
where $t(\tau)\in \mathcal F$ is chosen so that 
\[g(\tau)= \sum_{k=1}^n a^{t(\tau)}_{k,\tau(k)}.\]
Let us consider the function  
\[h(\sigma)= \sup_{t\in \mathcal F}  \left(\sum_{k=1}^n (a^t_{k,\sigma(k)})^2\right),\qquad \sigma\in G.\]
The mean of $h$, $\mu(h)$, can be interpreted as a variance term as regards to $g$.
Observing that $g$ satisfies the condition of (b) with 
\[\alpha_k(\tau):=a^{t(\tau)}_{k,\tau(k)},\]
and $|\alpha|_2^2\leq h$, 
 Corollary  \ref{cordev} provides the following Bernstein deviation's bounds, for all $u\geq 0$, 
 \[\mu\left(g\leq \mu( g)-u\right)\leq \exp\left( -\,\frac{u^2}{2 c(\ell)^2\mu\left(  h\right) }\right),\]
 and for all $\lambda, v\geq 0$,
 \[\mu\left(g\geq  \mu(g)+v +\frac{\lambda c(\ell)^2 h}{2}\right)\leq e^{-\lambda v}.\]
 If the real numbers $a_{i,j}$ are bounded by $M$, then $|\alpha|_2^2\leq Mg$ and therefore Corollary  \ref{cordev} also provides for all $u\geq 0$,
 \[\mu\left(g\geq  \mu(g)+u\right)\leq \exp\left( -\,\frac{u^2}{ 2 c(\ell)^2 M(\mu(g)+u) }\right).\]
 If we want to bound the deviation above the mean in terms of the variance term $\mu(h)$, it suffises to observe that the last inequality provides deviations bounds for the function $h$, replacing $g$ by $h$ and $M$ by $M^2$. Then,  as a consequence of all the above deviation's results, it follows that for all $\lambda, v,\gamma\geq 0$,
 \begin{align*}
 &\mu\left(g\geq \mu(g) +v+\frac{\lambda c(\ell)^2(\mu(h)+\gamma)}{2}\right)\\
 &\leq \mu\left(g\geq \mu(g) +v+\frac{\lambda c(\ell)^2h}2\right)+\mu(h\geq \mu(h)+\gamma)\\
 &\leq e^{-\lambda v}+\exp\left( -\,\frac{\gamma^2}{ 2 c(\ell)^2 M^2(\mu(h)+\gamma) }
 \right).
 \end{align*}
 By choosing $\gamma=Mu$, $\lambda=\frac u{c(\ell)^2 M^2(\mu(h)+Mu)}$, and $v=u/2$, we get the following Bernstein deviation inequality for the deviation of $g $ above its mean, for all $u\geq 0$
 \[\mu(g\geq \mu(g)+u)\leq 2 \exp\left( -\,\frac{u^2}{ 2 c(\ell)^2 (\mu(h)+Mu) }\right).\]
 All the previous deviation's inequalities extend to countable sets $\mathcal F$ by monotone convergence.
 
When $\mathcal F$ is reduced to a singleton, these deviation's results simply implies Bernstein deviation's results for $g(\sigma)=\sum_{k=1}^n a_{k,\sigma(k)}$ when $-M\leq a_{i,j}\geq M$ for all $1\leq i,j\leq n$, by following for example the procedure presented in \cite[Section 4.2]{BDR15}. Thus, we extend the deviation's results of \cite{BDR15} to probability measures in ${\mathcal M}_{\mathcal T}$.
\item As a last example, let $g(\sigma)=|\sigma|_l$ denotes the number of cycles of lenght $l$ in the cycle decomposition of a permutation $\sigma$. Let us show that $g$ is a configuration function. Let ${\mathcal C}_l(\tau)$ denotes the set of cycles of  lenght $l$ in the cycle decomposition of a permutation $\tau$. One has 
\begin{align*}
|\tau|_l&=\#\{{\mathcal C}_l(\tau)\cap {\mathcal C}_l(\sigma)\}+\#\{c\in {\mathcal C}_l(\tau), \mbox{ such that }c\notin {\mathcal C}_l(\sigma)\}\\
&\leq |\sigma|_l + \#\{c\in {\mathcal C}_l(\tau), \mbox{ such that }c\notin {\mathcal C}_l(\sigma)\}.
\end{align*}
 If $c\in {\mathcal C}_l(\tau)$ and $c\notin {\mathcal C}_l(\sigma)$ then there exists $k$ in the support of $c$  such that $\tau(k)\neq \sigma(k)$. 
 As a consequence, one has 
\[\#\{c\in {\mathcal C}_l(\tau), \mbox{ such that }c\notin {\mathcal C}_l(\sigma)\}
\leq \sum_{k=1}^n \alpha_k(\tau)\1_{\sigma(k)\neq \tau(k)}, \]
where $\alpha_k(\tau)=1$ if $k$ is in the support of a cycle of lenght $l$ of the cycle decomposition of $\tau$, and  $\alpha_l(\tau)=0$ otherwise.
Thus, we get that the function $g$ satisfies the condition of $(b)$, $g$ is a configuration function. Finally, observing that 
$|\alpha|_2^2=lg$, Corollary  \ref{cordev}
provides for any measure $\mu\in {\mathcal M}_{\mathcal T}$ satisfying \eqref{hypoinv}, for all $u\geq 0$, 
\[\mu\left(g\leq \mu( g)-u\right)\leq \exp\left( -\,\frac{u^2}{2 c(\ell)^2l\mu\left(  h\right) }\right),\]
and
\[\mu\left(g\geq  \mu(g)+u\right)\leq \exp\left( -\,\frac{u^2}{ 2 c(\ell)^2 l(\mu(g)+u) }\right).\]
\item The aim of this paper is to clarify the links between Talagrand's type of  concentration results on the symmetric group and functional inequalities derived from the transport-entropy inequalities. For brevity's sake, applications of these functional inequalities  are not fully developped in the present  paper. However, let us briefly mention some other applications using concentration results on the symmetric group: the stochastic travelling salesman problem for  sampling without replacement (see Appendix \cite{Pau14}), graph coloring problems (see \cite{McD02}). We also refer to the surveys and books  \cite{DP09,MR02} for other numerous examples of application of the concentration of measure principle  in randomized algorithms.

\end{itemize}
\begin{proof}[Proof of Corollary \ref{cordev}] We start with the proof of (b). From the assumption on  the function $g$, we get that for any  $p\in \P(G)$ 
\begin{multline*}
\int g\, dp\geq g(\sigma)-\sum_{k=1}^n \left(\alpha_k(\sigma) \int \1_{\sigma(k)\neq \tau(k)} \,dp(\tau)\right)\\
\geq g(\sigma)-|\alpha(\sigma)|_2 \left(\sum_{k=1}^n \left(\int \1_{\sigma(k)\neq \tau(k)} dp(\tau)\right)^2\right)^{1/2}.
\end{multline*}
Let $\lambda\geq 0$. Plugging this estimate into the definition of $\wideparen Q(\lambda g)$, it follows that  for any $\sigma\in G$
\[\wideparen Q(\lambda g)(\sigma)\geq \lambda g(\sigma)-\sup_{u\geq 0}\left\{ \lambda|\alpha(\sigma)|_2u -\frac{u^2}{2c(\ell)^2}\right\}=\lambda g(\sigma)-\frac{\lambda^2|\alpha(\sigma)|_2^2 c(\ell)^2}2.\]
As $\alpha$ goes to 1, \eqref{syminf2} applied to the function $\lambda g$ yields
\[\int e^{\wideparen Q(\lambda g)} d\mu\leq e^{\lambda \mu(g)}, \]
and therefore 
\begin{eqnarray}\label{exposup0}
\int \exp\left(\lambda g-\frac{\lambda^2c(\ell)^2 |\alpha|_2^2 }{2}\right) d\mu\leq e^{\lambda \mu(g)},
\end{eqnarray}
\begin{eqnarray}\label{exposup}
\int e^{\lambda g} d\mu\leq \exp\left(\lambda \mu(g)+ \frac{\lambda^2c(\ell)^2 \sup_{\sigma\in G}|\alpha(\sigma)|_2^2 }{2}\right),
\end{eqnarray}
and if $|\alpha|_2^2\leq Mg$, 
\begin{eqnarray}\label{exposup1}
\int \exp\left(\lambda \left(1-\frac{\lambda c(\ell)^2 M }{2}\right)g \right) d\mu\leq e^{\lambda \mu(g)}.
\end{eqnarray}
 As $\alpha$ goes to 0, \eqref{syminf2}  yields
\[\int e^{-\lambda g} d\mu\leq e^{\lambda \mu(\wideparen Q(\lambda g))}, \]
and therefore 
\begin{eqnarray}\label{expoinf}
\int e^{-\lambda g} d\mu \leq \exp\left({-\lambda \mu(g)+ \frac{\lambda^2c(\ell)^2\mu(|\alpha|_2^2) }2}\right).
\end{eqnarray}
The deviation bounds of (b) follows from \eqref{exposup0}, \eqref{expoinf}, \eqref{exposup}, \eqref{exposup1}  by Tchebychev inequality, and by optimizing over all $\lambda\geq 0$.

The deviation bounds of (a) are similarly obtained from \eqref{symtilde} by Tchebychev inequality. As above, the improvement for the deviation under the mean is a consequence of  \eqref{symtilde} applied to $\lambda g$, as $\alpha$ goes to 0, and  using the estimate
\[\widetilde Q_{K_n}(\lambda g)(\sigma)\geq \lambda g(\sigma)-\frac{\lambda^2 \beta(\sigma)^2 c(\ell)^2 K_n}2.\]

\end{proof}

\begin{proof}[Proof of Corollary \ref{talgen}]
Take a subset $A \subset G$ and consider the function $\varphi_\lambda$ which takes the values $0$ on $A$ and $\lambda>0$ on $G \setminus A$. 
It holds
\begin{align*}
 \wideparen Q\varphi_\lambda(\sigma)& = \inf_{p \in \mathcal{P}(G)}\left\{ \lambda(1-p(A)) + \frac 1{2c(\ell)^2}\sum_{j=1}^n \left(  \int \1_{\sigma(j)\neq y(j)}\, dp(y)\right)^2 \right\}\\
& = \inf_{\beta \in [0,1]} \{ \lambda (1-\beta) + \psi(\beta,\sigma)\},
\end{align*}
denoting by
\[
\psi(\beta,\sigma) = \inf \left\{ \frac 1{2c(\ell)^2}\sum_{j=1}^n \left(  \int \1_{\sigma(j)\neq y(j)}\, dp(y)\right)^2 ; p(A)=\beta \right\}.
\]
So it holds
\begin{align*}
 \wideparen Q\varphi_\lambda(\sigma) &= \min\left( \inf_{\beta \in [0,1-\varepsilon]} \{ \lambda (1-\beta) + \psi(\beta,\sigma)\} ;   \inf_{\beta \in [1-\varepsilon,1]} \{ \lambda (1-\beta) + \psi(\beta,\sigma)\},\right) \\
&\geq  \min\left( \lambda \varepsilon ; \inf_{\beta \geq 1-\varepsilon} \psi(\beta,\sigma)       \right) \to  \inf_{\beta \geq 1-\varepsilon} \psi(\beta,\sigma),
\end{align*}
as $\lambda \to \infty.$
It is easy to check that for any fixed $\sigma$, the function $\psi(\,\cdot\,,\sigma)$ is continuous on $[0,1]$, so letting $\varepsilon$ go to $0$, we get 
$\liminf_{\lambda \to \infty}  \wideparen Q \varphi_\lambda(\sigma) \geq \psi(1,\sigma).$
On the other hand, $  \wideparen Q \varphi_\lambda(\sigma) \leq \psi(1,\sigma)$ for all $\lambda>0$. This proves that $\lim_{\lambda \to \infty}  \wideparen Q \varphi_\lambda(\sigma) = \psi(1,\sigma)$.
Applying \eqref{syminf2} to $\varphi_\lambda$ and letting $\lambda$ go to infinity yields to
\[
\int e^{\alpha \psi(1,\sigma)}\,d\mu\cdot \mu(A)^{\alpha/(1-\alpha)} \leq 1.
\]
It remains to observe that  $\psi(1,\sigma)=\frac {f(\sigma,A)}{2c(\ell)^2}$. 
\end{proof}

\section{Proof of Theorem \ref{transportsymetrique}}
%Let $G$ be a $\ell$-local subgroup of $S_n$ and 
Let  $\mathcal T_n=(t_{i_j,j},j\in\{2,\ldots,n\}, i_j\in O_j)$ be a $\ell$-local base of $G$. Let $\mu$ be a probability measure of the set $\mathcal M_{{\mathcal T}_n}$  given by \eqref{setmeas}. Then, there exists a product probability measure $\hat \nu=\hat \nu_1\otimes \cdots \otimes \hat\nu_n$ such that $\mu=U_{\mathcal T_n}\# \hat \nu$ where the map $U_{\mathcal T_n}$ is given by \eqref{TT}.

Each transport-entropy inequality of Theorem \ref{transportsymetrique} is obtained by induction over $n$ and using the  partition $(H_i)_{i\in\orb(n)}$ of the group $G$ defined by: for any $i\in orb(n)=O_n$, 
\begin{eqnarray}\label{partition}
H_i:=\left\{\sigma\in G, \sigma(i)=n \right\}.
\end{eqnarray}
According to our notations, $H_n=G_{n-1}$ is a subgroup of $G$, and we may easily check that  $\mathcal T_{n-1}$ is a $\ell$-local base of this subgroup. We also observe that  if $G$ is a normal subgroup of $S_n$ then $G_{n-1}$ is a normal subgroup of $S_{n-1}$.

%From the definition of the $\ell$-local property, we may easily check that $G_{n-1}$ is a $\ell$-local group that  
%realizes a natural embedding of
%a $\ell$-local subgroup of $S_{n-1}$ and $\mathcal T_{n-1}$ is a $\ell$-local base of this subgroup. 
Moreover,  for any $i\in O_n$, $H_i$ is the coset defined by $H_i=H_nt_{in}$.
From the definition of $\mu$, if $\sigma\in H_i$, then there exist $i_2,\ldots, i_{n-1}$ such that $\sigma=t_{i_2,2}\cdots t_{i_{n-1}, n-1} t_{i,n}$ and therefore 
\[\mu(\sigma)=\hat \nu_2(i_2)\cdots\hat\nu_{n-1}(i_{n-1}) \hat\nu_n(i).\]
As a consequence, one has $\mu(H_i)=\hat \nu_n(i)$. 
Let $\mu_i$ denote the restriction of $\mu$ to $H_i$ defined by
\[\mu_i(\sigma)=\frac{\mu(\sigma)}{\mu(H_i)}\, \1_{\sigma\in H_i}.\]
From the construction of $\mu$, $\mu_n=U_{\mathcal T_{n-1}}\# (\hat\nu_1\otimes\cdots\otimes\hat  \nu_{n-1})$. Moreover, for all $\sigma \in H_n$,  one has $\sigma t_{i,n}\in H_i$ and 
\begin{eqnarray}\label{inducmes}
\mu_n(\sigma)=\frac{\mu(\sigma)}{\mu(H_n)}=\frac{\mu(\sigma t_{i,n})}{\mu(H_i)}=\mu_i(\sigma t_{i,n}).
\end{eqnarray}
Moreover if $\mu$ satisfies the condition \eqref{hypoinv}, then $\mu_n\in \P(G_{n-1})$ satisfies the same condition at rank $n-1$: namely, for any $\sigma\in G_{n-1}$,  $t\in S_{n-1}$,
\[\mu_n(\sigma)=\mu_n(\sigma^{-1})\quad\mbox{ and }\quad \mu_n(\sigma)=\mu_n(t^{-1}\sigma t).\]  These properties are  needed in the induction step of the proofs.

When $G$ is a $\ell$-local group, let us note that if $i$ and $l$ are elements of $O_n=\orb(n)$, then from the $\ell$-local property, there exists $t_{i,l}\in G$ such that $t_{i,l}(i)=l$ and $\degr(t_{i,l})\leq \ell$. We also have  $H_l=H_it_{i,l}$. If moreover  $\mu=\mu_o$ is  the uniform law on $G$, then  for any $i,l\in O_n$, $\mu_i(H_i)=\mu_l(H_l)=\frac{1}{\# O_n}$. In that case we will  use in the proofs  the following property: for any $\sigma\in H_n$, one has 
$\sigma t_{i,n}\in H_i$, $\sigma t_{i,n}t_{i,l}^{-1}\in H_l$, and
\begin{eqnarray}\label{inducmesbis}
\mu_n(\sigma)=\frac{\#O_n}{\# G}=\mu_i(\sigma t_{i,n})=\mu_l(\sigma t_{i,n}t_{i,l}^{-1}).
\end{eqnarray}
The measure $\mu_n$ is the uniform measure on the $\ell$-local subgroup $H_n=G_{n-1}$.
\begin{proof}[Proof of  (a) in  Theorem \ref{transportsymetrique}] 
 As already mentioned, since  $W_1$ satisfies a triangular inequality, the transport-entropy inequality \eqref{TW1} is equivalent to the following one: for all  $\nu\in{\mathcal P}(G)$, 
\[ \frac{2}{c(\ell)^2} W^2_1(\nu,\mu)\leq K_{n}\,H(\nu|\mu).\]
A dual formulation of this property given by Theorem 2.7 in \cite{GRST16} and Proposition 3.1 in \cite{Sam16}  is the following: for all  functions  $\varphi$ on  $G$ and all  $\lambda\geq 0$,
\begin{eqnarray}\label{transn}
\int e^{\lambda  Q\varphi} d\mu\leq e^{\int \lambda \varphi \,d\mu +K_{n}c(\ell)^2\lambda^2/8},
\end{eqnarray}
with  
\[Q\varphi(\sigma)=\inf_{p\in \P(S_n)}\left\{\int \varphi dp +\int d(\sigma,\tau) \,dp(\tau)\right\}\]
We will prove the inequality  \eqref{transn}  by induction on $n$. 

Assume that $n=2$. If $G=\{id\}$ then $K_n=0$ and  the inequality \eqref{transn}  is obvious.  If $G\neq\{id\}$, then $G$  is the two points space, $G=S_2$, $\ell=2$ and  one has    
\[Q\varphi(\sigma)=\inf_{p\in \P(S_2)}\left\{\int \varphi dp +c(2) \int \1_{\sigma\neq \tau} \,dp(\tau)\right\}.\]
In that case,   \eqref{transn} exactly corresponds  to the following dual form of the Csiszar-Kullback-Pinsker inequality \eqref{CKP} (see  Proposition 3.1 in \cite{Sam16} ): for any probability measure  $\nu$ on a Polish space  $\mathcal X$, for any measurable function  $f:\mathcal{X}\to \R $,
\begin{eqnarray}\label{Pinskerdual}
\int e^{\lambda R^cf} d\nu\leq e^{\lambda \int f \,d\nu + \lambda^2c^2/8},\qquad\forall \lambda,c\geq 0,
\end{eqnarray} 
with 
$\displaystyle
R^c f(x)=\inf_{p\in \P(\mathcal{X})} \left\{\int f dp+c\int \1_{x\neq y} dp(y)\right\}, x\in \X.
$

The induction step will be  also a consequence of  \eqref{Pinskerdual}.
Let $(H_i)_{i\in O_n}$ be the partition of  $G$ defined by \eqref{partition}.
Any  $p\in \P(G)$ admits a unique decomposition  defined by 
\begin{eqnarray}\label{dec}
\qquad p=\sum_{i\in  O_n} \hat p(i) p_i,  \qquad \mbox{ with  }\quad p_i\in \P(H_i) \qquad \mbox{ and } \quad  \hat p(i)=p(H_i).
\end{eqnarray}
This decomposition defines a  probability measure $\hat p$ on $O_n$.
In particular, according to the definition of the measure  $\mu\in \mathcal M_{{\mathcal T}_n}$ and  since   $\hat\nu_n(i)=\mu(H_i)$, one has
\[\mu= \sum_{i\in O_n}  \hat \nu_n(i) \,\mu_i.\]
It follows that 
\[\int e^{\lambda Q\varphi} d\mu = \sum_{i\in O_n} \hat \nu_n(i)  \int e^{\lambda Q\varphi(\sigma)} d\mu_i(\sigma)=\sum_{i\in O_n}\hat \nu_n(i) 
\int e^{\lambda Q\varphi(\sigma t_{i,n})} d\mu_n(\sigma),\]
where the last equality is a consequence of property \eqref{inducmes}.
Now, we will bound the right-hand side of this equality by using the induction hypotheses.

For any function $g:G\to \R $ and any $t\in G$, let $g^t:G\to\R$ denote the function defined by $g^t(\sigma):=g(\sigma t)$.

For any function  $f:H_n\to \R$ and any $\sigma\in H_n$, let us note 
\[Q^{H_n} f(\sigma):= \inf_{p\in \P(H_n)}\left\{\int f \,dp +\int d(\sigma,\tau) \,dp(\tau)\right\}.\]
The next step of the proof relies on the following Lemma.
\begin{lemma}\label{gateau} Let $i\in O_n$, for any function $\varphi: H_i\to \R$ and any $\sigma\in H_n$, one has
\begin{enumerate}
\item \label{lemme21(1)} $\displaystyle Q\varphi (\sigma t_{i,n})\leq \inf_{\hat p\in \P(O_n)}\left\{\sum_{l\in O_n} Q^{H_n} \varphi^{t_{n,l}}(\sigma) \hat p(l) +c(\ell)\sum_{l\in O_n} \1_{l\neq i} \hat p(l)\right\},$\\
where $c(\ell)=\min(2\ell-1,n) $ if $d=d_H$ and $c(\ell)=2$ if $d=d_T$ .
\item $\displaystyle Q\varphi (\sigma t_{i,n})\leq \inf_{\hat p\in \P(O_n)}\left\{\sum_{l\in O_n} Q^{H_n} \varphi^{t_{i,n}t_{i,l}^{-1}}(\sigma) \hat p(l) +c(\ell)\sum_{l\in O_n} \1_{l\neq i} \hat p(l)\right\},$\\
where $c(\ell)=\ell $ if $d=d_H$ and $c(\ell)=1$ if $d=d_T$, and $t_{i,l}$  denotes  an element of $G$ with $\degr(t_{i,l})\leq \ell$ and such that  $t_{i,l}(i)=l$.
\end{enumerate}
\end{lemma}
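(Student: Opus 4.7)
The plan is to produce the upper bound on $Q\varphi(\sigma t_{i,n})$ by constructing an explicit candidate measure in $\P(G)$. Given any $\hat p\in\P(O_n)$ and any family $(p'_l)_{l\in O_n}$ of probability measures on $H_n$, I would define $p_l$ to be the push-forward of $p'_l$ under the right-translation bijection $H_n \to H_l$, $\sigma'\mapsto \sigma' t_{l,n}$ in part (1) (respectively $\sigma'\mapsto \sigma' t_{i,n} t_{i,l}^{-1}$ in part (2)); both maps land in $H_l$ because $\sigma'\in H_n$ fixes $n$, so the image sends $l$ to $n$. Setting $p:=\sum_{l\in O_n}\hat p(l)\,p_l\in\P(G)$ and using the coset decomposition $G=\bigcup_{l\in O_n} H_l$ from \eqref{partition}, the definition of $Q\varphi$ yields
\[
Q\varphi(\sigma t_{i,n})\leq \sum_{l\in O_n} \hat p(l)\left(\int \varphi^{t_{l,n}}\,dp'_l+ \int d(\sigma t_{i,n},\sigma' t_{l,n})\,dp'_l(\sigma')\right),
\]
and analogously in part (2) with $\varphi^{t_{i,n}t_{i,l}^{-1}}$.

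The key step is to invoke the bi-invariance of $d$ (both $d_H$ and $d_T$ are invariant under left and right translation by elements of $G$) together with the triangle inequality:
\[
d(\sigma t_{i,n},\sigma' t_{l,n})\leq d(\sigma t_{i,n},\sigma' t_{i,n})+d(\sigma' t_{i,n},\sigma' t_{l,n}) =d(\sigma,\sigma')+d(t_{i,n},t_{l,n}).
\]
After integrating against $dp'_l$ and taking the infimum over $p'_l\in\P(H_n)$, the first term becomes $Q^{H_n}\varphi^{t_{l,n}}(\sigma)$ while the second collapses to the constant $d(t_{i,n},t_{l,n})$, which vanishes when $l=i$. It then remains to bound this residual by $c(\ell)\,\1_{l\neq i}$.

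The constants come out as follows. In part (1) for $d=d_H$, one has $d_H(t_{i,n},t_{l,n})=\degr(t_{l,n}^{-1}t_{i,n})$ and $\supp(t_{l,n}^{-1}t_{i,n})\subset\supp(t_{i,n})\cup\supp(t_{l,n})$; since $n$ lies in both supports whenever $i\neq n$ and $l\neq n$, this union has at most $2\ell-1$ elements, and the trivial bound $\leq n$ always applies, giving $c(\ell)=\min(2\ell-1,n)$. For $d=d_T$, the identity $t_{i,n}^{-1}t_{l,n}=t_{i,n}^{-1}\cdot t_{l,n}$ exhibits a product of two elements of $G$ of degree $\leq \ell$, so $d_T(t_{i,n},t_{l,n})\leq 2$. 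In part (2), the $\ell$-locality of $G$ supplies $t_{i,l}\in G$ of degree $\leq\ell$ with $t_{i,l}(i)=l$ (taken to be $id$ when $l=i$), and the same triangle-inequality argument leaves only $d(id,t_{i,l}^{-1})$ as residual, bounded by $\degr(t_{i,l})\leq\ell$ for $d_H$ and by $1$ for $d_T$. The main bookkeeping subtlety, and the whole reason for formulating the argument relative to an $\ell$-local base, is the sharpening from $2\ell$ to $2\ell-1$ via the forced overlap at $n$, which is exactly what is needed to recover Talagrand's constants in the induction step for Theorem \ref{transportsymetrique}.
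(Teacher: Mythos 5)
Your proof is correct and follows essentially the same decomposition-plus-triangle-inequality argument as the paper's, differing only cosmetically: you build the competitor measure bottom-up by pushing forward measures on $H_n$ via right translation, whereas the paper decomposes an arbitrary $p\in\P(G)$ top-down and then changes variables, and you pivot the triangle inequality through $\sigma' t_{i,n}$ rather than $\sigma t_{l,n}$, which produces the same two terms $d(\sigma,\sigma')+d(t_{i,n},t_{l,n})$. Your explicit derivation of the constant $\min(2\ell-1,n)$ via the forced overlap of the supports at $n$ spells out a bound that the paper states without justification.
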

This lemma is obtained using the  decomposition \eqref{dec} of the  measures $p\in \P(G)$ on the $H_j$'s.  Let $\sigma\in H_n$.  By the triangular inequality and using the invariance by translation of the distance $d$, one has 
\begin{align*}
\int d(\sigma t_{i,n},\tau) \,dp(\tau)&=\sum_{l\in O_n} \int_{H_l} d(\sigma t_{i,n},\tau) dp_l(\tau)\hat p(l)\\
&\leq \sum_{l\in O_n} \ d(\sigma t_{i,n},\sigma t_{l,n})  \hat p(l) +\sum_{l\in O_n} \int_{H_l} d(\sigma t_{l,n},\tau) dp_l(\tau) \hat p(l)\\
&=\sum_{l\in O_n} \ d( t_{i,n}, t_{l,n})  \hat p(l) +\sum_{l\in O_n} \int_{H_l} d(\sigma ,\tau t_{l,n}^{-1}) dp_l(\tau) \hat p(l)\
\end{align*}
and therefore, since $d( t_{i,n}, t_{l,n})\leq c(\ell)$ with $c(\ell)=\min(2\ell-1,n)$ if $d=d_H$ and $c(\ell)=2$ if $d=d_T$,
\begin{eqnarray}\label{triangular}
\qquad\int d(\sigma t_{i,n},\tau) \,dp(\tau) \leq  \sum_{l\in O_n} \int_{H_l} d(\sigma,\tau t_{l,n}^{-1}) dp_l(\tau) \hat p(l) +c(\ell) \sum_{l\in O_n} \1_{l\neq i}   \hat p(l).
\end{eqnarray}
 It follows that 
\begin{align*}
Q\varphi(\sigma t_{i,n})&\leq \inf_{\hat p\in \P(O_n)}\inf_{p_l\in \P(H_l),l\in O_n}\\
&\qquad\qquad \left\{
\sum_{l\in O_n}\left[ \int \varphi \,dp_l+ \int_{H_l} d(\sigma,\tau  t_{l,n}^{-1}) dp_l(\tau)\right] \hat p(l)+c(\ell)\sum_{l\in O_n} \1_{l\neq i} \hat p(l)
\right\}\\
&= \inf_{\hat p\in \P({O_n})}\inf_{q_l\in \P(H_n),l\in O_n}\\
&\qquad\qquad \left\{
\sum_{l\in  O_n}\left[ \int \varphi^{t_{l,n}} \,dq_l+ \int_{H_n} d(\sigma ,\tau ) dq_l(\tau)\right] \hat p(l)+c(\ell)\sum_{l\in  O_n} \1_{l\neq i} \hat p(l)
\right\}\\
&=\inf_{\hat p\in \P( O_n)}\left\{\sum_{l\in  O_n} Q^{H_n} \varphi^{t_{l,n}}(\sigma) \hat p(l) +c(\ell)\sum_{l\in  O_n} \1_{l\neq i} \hat p(l)\right\}.
\end{align*} 
The proof of the second inequality of Lemma \ref{gateau} is similar, starting from the following triangular inequality
\begin{align}
\int d(\sigma t_{i,n},\tau) \,dp(\tau)&=\sum_{l\in  O_n} \int_{H_l} d(\sigma t_{i,n},\tau) dp_l(\tau)\hat p(l)\nonumber \\ 
&\leq \sum_{l\in  O_n} \int  d(\sigma t_{i,n},\tau t_{i,l}) dp_l(\tau)   \hat p(l) +\sum_{l\in  O_n} \int_{H_l} d(\tau t_{i,l},\tau) dp_l(\tau) \hat p(l)\nonumber \\
&=\sum_{l\in  O_n} \int  d(\sigma ,\tau t_{i,l}t_{i,n}^{-1}) dp_l(\tau)   \hat p(l) +\sum_{l\in  O_n}d( t_{i,l}, id) \hat p(l)\nonumber \\
&\leq  \sum_{l\in  O_n} \int_{H_l} d(\sigma,\tau t_{i,l}t_{i,n}^{-1}) dp_l(\tau) \hat p(l) +c(\ell) \sum_{l\in  O_n} \1_{l\neq i}   \hat p(l), \label{triangularbis}
\end{align}
with $c(\ell)=\ell$ if $d=d_H$ and $c(\ell)=1$ if $d=d_T$.
The end of the proof of the second inequality of Lemma \ref{gateau} is left to the reader.

The induction step of the proof of \eqref{transn} continues by applying consecutively  Lemma \ref{gateau} \eqref{lemme21(1)},   the H\"older inequality, and the induction hypotheses to the measure $\mu_n$ on the subgroup $H_n=G_{n-1}$ with $\ell$-local base ${\mathcal T}_{n-1}$.

If $O_n=\{n\}$ then $K_n=K_{n-1}$ and 
\begin{eqnarray*}
\int e^{\lambda Q\varphi} d\mu = 
\int e^{\lambda Q\varphi(\sigma)} d\mu_n(\sigma)\leq e^{\int\lambda \varphi d\mu_n +K_{n-1}c(\ell)^2/8}=e^{\int\lambda \varphi d\mu +K_nc(\ell)^2/8}\end{eqnarray*}

If $O_n\neq \{n\}$ then $K_n=K_{n-1}+1$ and for any $i\in  O_n$, 
\begin{align*}
&\int e^{\lambda Q\varphi(\sigma t_{i,n})} d\mu_n(\sigma)\leq \inf_{\hat p\in \P( O_n)}\left\{ \prod_{l\in O_n}\left(\int e^{\lambda Q^{H_n} \varphi^{t_{l,n}}} d\mu_n \right)^{\hat p(l)}e^{c(\ell) \lambda \sum_{l=1}^n \1_{l\neq i} \hat p(l)}\right\}\\
&\leq \exp\left[\inf_{\hat p\in \P( O_n)}\left\{ \lambda\sum_{l\in O_n} \left(\int \varphi^{t_{l,n}} d\mu_n\right)\hat p(l)+K_{n-1}c(\ell)^2\frac{\lambda^2}8 +c(\ell) \lambda \sum_{l\in O_n} \1_{l\neq i} \hat p(l)\right\}\right]\\
&= \exp \left[\lambda\inf_{\hat p\in \P( O_n)}\left\{\sum_{l\in O_n}\hat \varphi(l) \hat p(l)+c(\ell)  \sum_{l\in O_n} \1_{l\neq i} \hat p(l)\right\}+K_{n-1}c(\ell)^2\frac{\lambda^2}8 \right],
\end{align*}
where, by using property \eqref{inducmes}, $\hat \varphi(l):=\int \varphi d\mu_l=\int \varphi^{t_{l,n}} d\mu_n$. 
Let us consider again the above infimum-convolution $R^c\hat\varphi$ defined on the space $\mathcal{X}=  O_n$, with $c=c(\ell)$, one has  
\[R^c\hat\varphi(i)=\inf_{\hat p\in \P( O_n)}\left\{\sum_{l\in O_n} \hat \varphi(l) \hat p(l)+c  \sum_{l\in O_n} \1_{l\neq i} \hat p(l)\right\}.\]
By applying  \eqref{Pinskerdual} with the probability measure  $\nu=\hat \nu_n$ on  $ O_n$, the previous inequality gives 
\begin{align*}
&\int e^{\lambda Q\varphi} d\mu =  \sum_{i\in O_n} \hat\nu_n(i)\int e^{\lambda Q\varphi(\sigma t_{i,n})} d\mu_n(\sigma)\leq  \left( \sum_{i\in O_n} e^{\lambda R^{c(\ell)}\hat\varphi(i)} \hat\nu_n(i)\right) e^{K_{n-1}{\lambda^2}/8}\\
&\leq \exp\left[  \sum_{i=1}^n \hat\varphi(i) \hat\nu_n(i) + \frac{\lambda^2c(\ell)^2}{8}+ K_{n-1}c(\ell)^2\frac {\lambda^2}{8 }\right]=\exp\left[\lambda \int \varphi \,d\mu +K_{n}c(\ell)^2\frac {\lambda^2}8\right].
\end{align*}
This ends the proof of  \eqref{transn} for any $\mu\in {\mathcal M}_{{\mathcal T}_n}$.

The scheme of the induction proof of \eqref{transn}, with a better constant $c(\ell)$ when $\mu=\mu_o$ is the uniform measure on a $\ell$-local group $G$, is identical, starting from the second result of Lemma \ref{gateau} and using the property \eqref{inducmesbis}. This is left to the reader. 
\vspace{0,3 cm}

We now turn to the induction proof of the  dual formulation \eqref{symtilde} of the weak transport-entropy inequality \eqref{Ttilde}. The sketch of the proof is identical to the one of \eqref{transn}.

For the initial step $n=2$, one has  $G=S_2$ and $\ell=2$,  and one may easily check that 
\[\widetilde Q_{1}\varphi(\sigma)=\inf_{p\in \P(S_2)}\left\{\int \varphi dp + \frac1{2}\left( \int \1_{\sigma\neq \tau} \,dp(\tau)\right)^2\right\}.\] 
In that case, the result follows from the following  infimum-convolution property. 
\begin{lemma}\label{completegraph}  For any probability measure  $\nu$ on a Polish metric space  $\mathcal X$, for all  $\alpha\in (0,1)$ and all
 measurable functions  $f:\mathcal{X}\to \R $,  bounded from below
 \[\left(\int e^ {\alpha \widetilde R^\alpha f} d\nu\right)^{1/\alpha}\left(\int e^{-(1-\alpha)f} d\nu\right)^{1/(1-\alpha)}\leq 1,\]
 where for all $x\in \X$,  
 \[ \widetilde R^\alpha f ( x  )=\inf_{ p\in \P(\X)}  \left\{  \int f(y) d p(y)  +c_\alpha \left( \int \1_{x \neq y } d p(y)\right)^2 \right\},\]
  and  $c_\alpha$ is the  convex function defined by 
\[c_\alpha (u)=\frac{\alpha(1-u)\log(1-u)-(1-\alpha u)\log(1-\alpha u)}{\alpha(1-\alpha)},\quad u\in[0,1].\]
Observing that $c_\alpha (u)\geq u^2/2$ for all $u\in [0,1]$, the above inequality also holds replacing  $\widetilde R^\alpha f$ by  
 \begin{eqnarray}\label{deftildeR}
  \widetilde R f ( x  )=\inf_{ p\in \P(\X)}  \left\{  \int f(y) d p(y)  +\frac12 \left( \int \1_{x \neq y } d p(y)\right)^2 \right\},\qquad x\in\X.
  \end{eqnarray}
\end{lemma}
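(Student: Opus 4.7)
The plan is to pass from the stated infimum-convolution inequality to its dual weak transport-entropy formulation for $\nu$, and to prove the latter via an explicit Marton-type coupling combined with a one-dimensional convex estimate in which the precise form of $c_\alpha$ is decisive.

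Applying the Gibbs variational formula
\[
\log\int e^g\, d\nu = \sup_{\rho \in \P(\X)}\left\{\int g\,d\rho - H(\rho|\nu)\right\}
\]
to each of the two integrals on the left, and invoking Kantorovich duality for weak transport costs as developed in \cite{GRST16}, the stated inequality is equivalent to the following weak transport-entropy inequality: for every $\nu_1, \nu_2 \in \P(\X)$,
\[
\inf_{\pi \in \Pi(\nu_1,\nu_2)}\int c_\alpha\bigl(1 - p_x(\{x\})\bigr)\, d\nu_1(x) \leq \frac{H(\nu_1|\nu)}{\alpha} + \frac{H(\nu_2|\nu)}{1-\alpha},
\]
where $\pi(dx,dy) = \nu_1(dx) p_x(dy)$. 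To bound the left-hand side, I may assume $\nu_i \ll \nu$ with density $f_i$ (otherwise the right-hand side is $+\infty$); set $h := \min(f_1,f_2)$ and $\delta := \int (f_1-h)\,d\nu$ (the case $\delta = 0$ is trivial), and build the Marton coupling
\[
p_x(dy) := \frac{h(x)}{f_1(x)}\,\delta_x(dy) + \frac{f_1(x)-h(x)}{f_1(x)}\cdot\frac{f_2(y)-h(y)}{\delta}\,\nu(dy).
\]
A direct check confirms the marginals, and since $f_1-h$ and $f_2-h$ have disjoint supports, $1 - p_x(\{x\}) = (1 - f_2(x)/f_1(x))_+$, so the cost to bound reduces to $\int c_\alpha\bigl((1 - f_2/f_1)_+\bigr) f_1\, d\nu$.

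The main step, and the chief obstacle, is then the pointwise inequality
\[
c_\alpha\bigl((1 - b/a)_+\bigr)\, a \leq \frac{a\log a - a + 1}{\alpha} + \frac{b\log b - b + 1}{1-\alpha}, \qquad a, b \geq 0,
\]
which integrates against $\nu$ to yield the target (the linear normalisation terms vanish because $\int f_i\,d\nu = 1$). For $b \geq a$ the left-hand side is $0$ and the inequality is immediate. For $b < a$, set $r = b/a$ and regard the difference $G(a)$ of the two sides as a smooth convex function of $a > 0$ for fixed $r$ (convexity from $G''(a) = [1-\alpha(1-r)]/[\alpha(1-\alpha) a]>0$); its critical equation solves explicitly to $a^\star = 1/[1-\alpha(1-r)]$, and direct substitution using the defining formula for $c_\alpha$ shows $G(a^\star) = 0$. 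This is the crux: the explicit expression for $c_\alpha$ is engineered precisely so that this minimum vanishes.

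Finally, the concluding assertion follows at once from the elementary computation $c_\alpha''(u) = 1/[(1-u)(1-\alpha u)] \geq 1$ on $[0,1)$, together with $c_\alpha(0) = c_\alpha'(0) = 0$, which gives $c_\alpha(u) \geq u^2/2$; replacing $c_\alpha$ by $u^2/2$ only weakens the cost in the infimum-convolution, so the inequality persists for $\widetilde R f$ defined in \eqref{deftildeR}.
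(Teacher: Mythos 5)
Your proof is correct, but it takes a genuinely different route from the paper's Appendix argument. You dualize: via the Gibbs variational formula and the Kantorovich duality of \cite{GRST16}, the stated infimum-convolution inequality is converted into a weak transport-entropy inequality of the form
\[
\inf_{\pi\in\Pi(\nu_1,\nu_2)}\int c_\alpha\bigl(1-p_x(\{x\})\bigr)\,d\nu_1(x)\leq \frac{H(\nu_1|\nu)}{\alpha}+\frac{H(\nu_2|\nu)}{1-\alpha},
\]
which you then prove by exhibiting the explicit Marton coupling and reducing everything to the scalar entropy inequality $a\,c_\alpha\bigl((1-b/a)_+\bigr)\le(a\log a-a+1)/\alpha+(b\log b-b+1)/(1-\alpha)$, verified through the convexity of $G$ and the identity $G(a^\star)=0$. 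The paper instead stays on the primal infimum-convolution side: the localization Lemma~\ref{chose} reduces the supremum over test measures $\nu$ to measures carried by at most two points, on which the inequality becomes a one-variable statement closed by showing that $\widetilde R^\alpha f(x)=\phi(f(x))$ with $c_\alpha$ arising as a Legendre-type transform of $\phi$, together with convexity of the auxiliary function $\psi$. Both arguments hinge on the same algebraic fact — that $c_\alpha$ is exactly the function forced by a pointwise relative-entropy identity — but your coupling route is closer in spirit to Marton's original proofs of transport inequalities and makes the role of $c_\alpha$ transparent at the level of densities; it also runs directly on a general Polish $\X$, whereas the paper's localization is written for finite $\X$ (with the Polish case deferred to~\cite{Sam07}). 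On the other hand, the paper's localization proof is more elementary and self-contained in that it does not invoke the weak Kantorovich duality machinery that your first step relies on.
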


The proof of this Lemma can be found in \cite{Sam07} (inequality (4)). For a sake of completeness, we give in the Appendix a new proof of this result on  finite spaces $\X$ by using a localization argument (Lemma \ref{chose}). 

Let us now present the  key lemma for  the induction step of the proof.
For any function  $f:H_n\to \R$ and any $\sigma\in H_n$, we define 
\[\widetilde Q^{H_n}_t f(\sigma):= \inf_{p\in \P(H_n)}\left\{\int f \,dp +\frac 1 {2c(\ell)^2t}\left(\int d(\sigma,\tau) \,dp(\tau)\right)^2\right\}.\]
Here, writing $Q^{H_n}_t f$, we omit the dependence in $c(\ell)$ to simplify the notations. 
The proof relies on the following Lemma.
\begin{lemma}\label{lemmeQtilde} Let $i\in  O_n$. For any function $\varphi: H_i\to \R$ and any $\sigma\in H_n$, one has
\begin{enumerate}
\item\label{lemme23(1)} $\displaystyle \widetilde Q _{K_n}\varphi (\sigma t_{i,n})\leq \inf_{\hat p\in \P( O_n)}\left\{\sum_{l\in O_n} \widetilde Q^{H_n}_{K_{n-1}} \varphi^{t_{l,n}}(\sigma) \hat p(l) +\frac 1 2 \bigg(\sum_{l\in O_n} \1_{l\neq i} \hat p(l)\bigg)^2\right\},$\\
with  $c(\ell)=\min(2\ell-1,n) $ if $d=d_H$ and $c(\ell)=2$ if $d=d_T$ .

\item $\displaystyle \widetilde Q _{K_n}\varphi (\sigma t_{i,n})\leq \inf_{\hat p\in \P( O_n)}\left\{\sum_{l\in O_n} \widetilde Q^{H_n}_{K_{n-1}} \varphi^{t_{i,n}t_{i,l}^{-1}}(\sigma) \hat p(l) +\frac 1 2 \bigg(\sum_{l\in O_n}\1_{l\neq i} \hat p(l)\bigg)^2\right\},$\\
where $c(\ell)=\ell $ if $d=d_H$ and $c(\ell)=1$ if $d=d_T$, and $t_{i,l}$  denotes  an element of $G$ with $\degr(t_{i,l})\leq \ell$ and such that  $t_{i,l}(i)=l$.
\end{enumerate}
\end{lemma}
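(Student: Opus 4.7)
The plan is to mimic the proof of Lemma \ref{gateau} but replace the linear triangular bound by a quadratic version, obtained from the elementary convexity inequality
\[
(a+b)^2 \leq \frac{a^2}{s} + \frac{b^2}{1-s}, \qquad s \in (0,1),
\]
with the specific choice $s = K_{n-1}/K_n$ (so that $1-s = 1/K_n$).

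I fix $\sigma \in H_n$ and $i \in O_n$, decompose $p \in \P(G)$ along the partition $(H_l)_{l \in O_n}$ as $p = \sum_{l \in O_n} \hat p(l) \, p_l$ in the spirit of \eqref{dec}, and transport each $p_l \in \P(H_l)$ onto $q_l \in \P(H_n)$ by right translation, so that
\[
\int \varphi \, dp_l = \int \varphi^{t_{l,n}} \, dq_l, \qquad A_l := \int d(\sigma, \eta) \, dq_l(\eta) = \int_{H_l} d\bigl(\sigma, \tau t_{l,n}^{-1}\bigr) \, dp_l(\tau).
\]
The triangular bound \eqref{triangular} established inside the proof of Lemma \ref{gateau} then reads
\[
\int d(\sigma t_{i,n}, \tau) \, dp(\tau) \leq \sum_{l \in O_n} A_l \, \hat p(l) + c(\ell) \, B, \qquad B := \sum_{l \in O_n} \1_{l \neq i} \, \hat p(l).
\]

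Squaring this bound and applying the convex split with $s = K_{n-1}/K_n$ yields
\[
\frac{\bigl(\sum_l A_l \hat p(l) + c(\ell) B\bigr)^2}{2 c(\ell)^2 K_n} \leq \frac{\bigl(\sum_l A_l \hat p(l)\bigr)^2}{2 c(\ell)^2 K_{n-1}} + \frac{B^2}{2}.
\]
Jensen's inequality dominates $\bigl(\sum_l A_l \hat p(l)\bigr)^2$ by $\sum_l A_l^2 \, \hat p(l)$. Combined with the identity $\int \varphi \, dp = \sum_l \hat p(l) \int \varphi^{t_{l,n}} \, dq_l$ and plugged into the definition of $\widetilde Q_{K_n} \varphi(\sigma t_{i,n})$, this gives
\[
\widetilde Q_{K_n} \varphi(\sigma t_{i,n}) \leq \sum_{l \in O_n} \hat p(l) \left\{ \int \varphi^{t_{l,n}} \, dq_l + \frac{A_l^2}{2 c(\ell)^2 K_{n-1}} \right\} + \frac{B^2}{2}.
\]

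The final step is to take the infimum over each $q_l \in \P(H_n)$ independently, which by definition produces $\widetilde Q^{H_n}_{K_{n-1}} \varphi^{t_{l,n}}(\sigma)$, followed by the infimum over $\hat p \in \P(O_n)$; this delivers exactly part (1) of the lemma. The second inequality is proved in the same way, starting from \eqref{triangularbis} and using right translation by $t_{i,n} t_{i,l}^{-1}$ in place of $t_{l,n}^{-1}$. The main delicate point is the arithmetic balance of constants in the quadratic split: the weight $s = K_{n-1}/K_n$ is the unique one that simultaneously recovers the induction constant $K_{n-1}$ on the first term and the clean coefficient $\tfrac{1}{2}$ in front of $B^2$, leaving no slack for the induction on $n$ to close.
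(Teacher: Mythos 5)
Your proof is correct and follows exactly the paper's argument: the same decomposition $p = \sum_{l\in O_n}\hat p(l)\,p_l$, the same triangular bound \eqref{triangular}, the same quadratic split $(u+v)^2\leq u^2/s + v^2/(1-s)$ with the choice $s=K_{n-1}/K_n$, the same Jensen step, and the same right-translation to reduce to $\widetilde Q^{H_n}_{K_{n-1}}$; part (2) likewise proceeds from \eqref{triangularbis}. No meaningful difference from the paper.
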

The proof of this lemma is similar to the one of Lemma \ref{gateau}. By   \eqref{triangular}  and the inequality 
\[(u+v)^2\leq \frac{u^2}{s}+\frac{v^2}{1-s}, \qquad  u,v\in \R, \quad  s\in(0,1),\]
we get for any $s\in(0,1)$,
\begin{align*}
\left(\int d(\sigma t_{l,n},\tau) \,dp(\tau)\right)^2 &\leq \left( \sum_{l\in  O_n} \int_{H_l} d(\sigma,\tau t_{l,n}^{-1}) dp_l(\tau) \hat p(l) +c(\ell) \sum_{l\in  O_n} \1_{l\neq i}   \hat p(l)\right)^2\\
&\leq \frac 1s\left( \sum_{l\in  O_n} \int_{H_l} d(\sigma,\tau t_{l,n}^{-1}) dp_l(\tau) \hat p(l) \right)^2+
\frac{c(\ell)^2}{1-s}\bigg(\sum_{l\in  O_n} \1_{l\neq i}   \hat p(l)\bigg)^2\\
&\leq \frac 1s  \sum_{l\in  O_n} \left(\int_{H_l} d(\sigma,\tau t_{l,n}^{-1})dp_l(\tau) \right)^2\hat p(l) +
\frac{c(\ell)^2}{1-s}\bigg(\sum_{l\in  O_n} \1_{l\neq i}   \hat p(l)\bigg)^2.
\end{align*}
 It follows that for any $\sigma\in H_n$,
\begin{align*}
&\widetilde Q_{K_n}\varphi(\sigma  t_{l,n})\\& \leq \inf_{\hat p\in \P( O_n)}\inf_{p_l\in \P(H_l),l\in O_n} \left\{
\sum_{l\in O_n}\left[ \int \varphi \,dp_l+\frac{1}{2c(\ell)^2sK_n} \left(\int_{H_l} d(\sigma,\tau t_{l,n}^{-1}) dp_l(\tau) \right)^2\right] \hat p(l) \right.\\
&\qquad\qquad\qquad\qquad\qquad\qquad\qquad\qquad\left.+\frac{1}{2(1-s)K_n}\bigg(\sum_{l\in O_n} \1_{l\neq i}   \hat p(l)\bigg)^2
\right\}\\
&= \inf_{\hat p\in \P( O_n)}\inf_{q_l\in \P(H_n),l\in O_n}
 \left\{
\sum_{l\in O_n}\left[ \int \varphi^{t_{l,n}} \,dq_l+\frac{1}{2c(\ell)^2sK_n} \left(\int_{H_n} d(\sigma,\tau) dq_l(\tau) \right)^2\right] \hat p(l) \right.\\
&\qquad\qquad\qquad\qquad\qquad\qquad\qquad\qquad\left.+\frac{1}{2(1-s)K_n}\bigg(\sum_{l\in O_n} \1_{l\neq i}   \hat p(l)\bigg)^2
\right\}\\
&=\inf_{\hat p\in \P( O_n)}\left\{\sum_{l\in O_n} \widetilde Q^{H_n}_{K_{n-1}} \varphi^{t_{l,n}}(\sigma) \hat p(l) +\frac 12 \bigg(\sum_{l\in O_n} \1_{l\neq i}   \hat p(l)\bigg)^2\right\},
\end{align*} 
where the last equality follows by choosing $s=K_{n-1}/K_n$, which ends the proof of the first inequality  of Lemma \ref{lemmeQtilde}. The second inequality of Lemma \ref{lemmeQtilde} is obtained identically starting from \eqref{triangularbis}.

We now turn to the induction step of the proof. By the decomposition of the measure $\mu$ on the $H_i$'s,  we want to bound
\[\int e^{\alpha \widetilde Q_{K_n} \varphi} d\mu=\sum_{i\in  O_n}\hat \nu_n(i) \int e^{\alpha \widetilde Q_{K_n} \varphi(\sigma)} d\mu_i(\sigma)= \sum_{i\in  O_n}\hat \nu_n(i) \int e^{\alpha \widetilde Q_{K_n} \varphi(\sigma t_{i,n})} d\mu_n(\sigma),\]
where the last equality is a consequence of property \eqref{inducmes}.

If $ O_n=\{n\}$,  then the result simply follows from the induction hypotheses applied to the measure $\mu_n$.

If  $ O_n\neq\{n\}$, then 
applying  successively  Lemma \ref{lemmeQtilde} \eqref{lemme23(1)}, the H\"older inequality, and the induction hypotheses, we get 
\begin{align*}
&\int e^{\alpha \widetilde Q_{K_n} \varphi(\sigma t_{i,n})} d\mu_n(\sigma)
\leq \inf_{\hat p\in \P( O_n)}
\left\{ \prod_{l\in O_n}\left(\int e^{\alpha \widetilde Q_{K_{n-1}}^{H_n} \varphi^{t_{l,n}}} d\mu_n \right)^{\hat p(l)}\exp\left[\frac 12 \bigg(\sum_{l\in O_n} \1_{l\neq i}   \hat p(l)\bigg)^2\right]\right\}\\
&\leq  \inf_{\hat p\in \P( O_n)}
\left\{ \prod_{l\in O_n}\left(\int e^{-(1-\alpha)   \varphi^{t_{l,n}}} d\mu_n \right)^{-\frac{\hat p(l)\alpha}{1-\alpha}}\exp\left[\frac 12 \bigg(\sum_{l\in O_n} \1_{l\neq i}   \hat p(l)\bigg)^2\right]\right\}\\
&= \exp \left[\alpha \inf_{\hat p\in \P( O_n)}\left\{\sum_{l\in O_n} \hat \varphi(l) \hat p(l)+\frac 12 \bigg(\sum_{l\in O_n} \1_{l\neq i}   \hat p(l)\bigg)^2 \right\} \right],
\end{align*}
where by property \eqref{inducmes}, we set
\[\hat \varphi(l):=\log  \left(\int e^{-(1-\alpha)   \varphi} d\mu_l \right)^{-\frac{1}{1-\alpha}}=\log  \left(\int e^{-(1-\alpha)   \varphi^{t_{l,n}}} d\mu_n \right)^{-\frac{1}{1-\alpha}}.\]
According to the definition of the infimum convolution $\widetilde R \hat \varphi$ on the space $\X=  O_n$ given in Lemma \ref{completegraph}, the last inequality is
\[\int e^{\alpha \widetilde Q_{K_n} \varphi(\sigma t_{i,n})} d\mu_n(\sigma)
\leq e^{\alpha \widetilde R \hat \varphi(i)},\]
and therefore Lemma \ref{completegraph}, applied with the measure $\nu=\hat \nu_n$, provides 
\begin{align*}
\int e^{\alpha \widetilde Q_{K_n} \varphi} d\mu &=  \sum_{i\in O_n} e^{\alpha  \widetilde R \hat \varphi(i)} \hat \nu_n(i) \leq  \bigg( \sum_{i\in O_n} e^{-(1-\alpha)\hat \varphi(i)}\hat \nu_n(i)\bigg)^{-\frac{\alpha}{1-\alpha}} \\
&= \bigg( \sum_{i\in O_n} \hat \nu_n(i)  \int e^{-(1-\alpha) \varphi} d\mu_i \bigg)^{-\frac{\alpha}{1-\alpha}}=\left( \int e^{-(1-\alpha) \varphi} d\mu\right)^{-\frac{\alpha}{1-\alpha}}.
\end{align*}
The proof of \eqref{symtilde} is completed for any measure $\mu\in \mathcal M$. To improve the constant when $\mu=\mu_o$ is the uniform law on a $\ell$-local group $G$, the proof is similar using the second inequality of Lemma \ref{lemmeQtilde} together with  property \eqref{inducmesbis}.
\end{proof}

  \begin{proof}[Proof of  (b) in Theorem \ref{transportsymetrique}] 
 We prove the dual equivalent property \eqref{syminf2}  as a consequence of the stronger following result: for any real function $\varphi$ on $G$, for any $j\in \{1, \ldots ,n\}$
\begin{eqnarray}\label{improvedthm}
\left(\int e^ {\alpha Q^j\varphi} d\mu\right)^{1/\alpha}\left(\int e^{-(1-\alpha)\varphi} d\mu\right)^{1/(1-\alpha)}\leq 1,
\end{eqnarray} 
where the infimum convolution operator $ Q^j\varphi$ is defined as follows, for $\sigma\in G$
 \begin{multline}  Q^j\varphi(\sigma)=\inf_{p_\in \P(G)} \left\{  \int \varphi dp + \frac 1{c(\ell)^2}\left(\int \1_{\sigma(j)\neq y(j)} dp(y)\right)^2 \right.\\
 \left.+\frac 1{2c(\ell)^2} \sum_{k\in[n]\setminus \{j\}} \left(\int \1_{\sigma(k)\neq y(k)} dp(y)\right)^2 \right\}\label{defQj}.
 \end{multline}

The proof of \eqref{improvedthm} relies on Lemma \ref{completegraph} and the  following ones.  For any $\sigma\in G$, we define
\[    Q^{H_n} \varphi(\sigma)=\inf_{p\in {\mathcal P}(H_n)}
 \left\{  \int \varphi dp +\frac 1{2c(\ell)^2} \sum_{k=1}^{n-1} \left(\int \1_{\sigma(k)\neq y(k)} dp(y)\right)^2 \right\},\]
 and for $j\in [n-1]$, 
 \begin{align*}
    Q^{H_n,j} \varphi(\sigma)&=\inf_{p\in {\mathcal P}(H_n)}
 \left\{  \int \varphi dp + \frac 1{c(\ell)^2}\left(\int \1_{\sigma(j)\neq y(j)} dp(y)\right)^2\right.\\
 &\left.\qquad\qquad\qquad  +\frac 1{2c(\ell)^2} \sum_{k\in[n-1] \setminus \{j\}} \left(\int \1_{\sigma(k)\neq y(k)} dp(y)\right)^2 \right\} . 
 \end{align*}

\begin{lemma}\label{inverse} Let $j\in [n]$. For any  $\sigma\in G$, one has 
\[ Q^j\varphi(\sigma)=Q^{\sigma(j)}\varphi^{\{-1\}}(\sigma^{-1}), \]
where $\varphi^{\{-1\}}(z)=\varphi(z^{-1}),z\in G$.
\end{lemma}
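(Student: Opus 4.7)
The plan is to reduce the identity to a straightforward change of variables $y \mapsto y^{-1}$ in the definition of the infimum-convolution operator, together with a reindexing of the sum over coordinates via the bijection $k \mapsto \sigma^{-1}(k)$.

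First, I will unpack the right-hand side. By definition,
\[Q^{\sigma(j)}\varphi^{\{-1\}}(\sigma^{-1}) = \inf_{p\in\P(G)}\left\{\int \varphi(y^{-1})\,dp(y) + \frac{1}{c(\ell)^2}\left(\int \1_{\sigma^{-1}(\sigma(j))\neq y(\sigma(j))}\,dp(y)\right)^{\!2} + \frac{1}{2c(\ell)^2}\sum_{k\neq \sigma(j)}\left(\int \1_{\sigma^{-1}(k)\neq y(k)}\,dp(y)\right)^{\!2}\right\}.\]
Since $\sigma^{-1}(\sigma(j)) = j$, the distinguished indicator becomes $\1_{j\neq y(\sigma(j))}$.

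Next, I will change variable by the inversion bijection on $G$: given $p \in \P(G)$, let $\tilde p$ denote the pushforward of $p$ under $y\mapsto y^{-1}$, which is again in $\P(G)$ since $G$ is a group, and the map $p\mapsto \tilde p$ is a bijection on $\P(G)$. The function integral transforms as $\int \varphi(y^{-1})\,dp(y) = \int \varphi\,d\tilde p$. For the indicators, the key observation is: for any $k\in[n]$,
\[y(k) \neq \sigma^{-1}(k) \iff \tilde y^{-1}(k) \neq \sigma^{-1}(k) \iff k \neq \tilde y(\sigma^{-1}(k)) \iff \sigma(m) \neq \tilde y(m), \quad \text{where } m = \sigma^{-1}(k).\]
In particular, the distinguished indicator $\1_{j\neq y(\sigma(j))}$ becomes $\1_{\sigma(j)\neq \tilde y(j)}$, and as $k$ ranges over $[n]\setminus\{\sigma(j)\}$, the index $m = \sigma^{-1}(k)$ ranges over $[n]\setminus\{j\}$, with the indicator becoming $\1_{\sigma(m)\neq \tilde y(m)}$.

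Putting these two transformations together, the expression inside the infimum for $Q^{\sigma(j)}\varphi^{\{-1\}}(\sigma^{-1})$ coincides, after relabeling $p \leftrightarrow \tilde p$ and $k \leftrightarrow m$, with the expression defining $Q^j\varphi(\sigma)$ in \eqref{defQj}. Since the change of measure is a bijection of $\P(G)$, the two infima are equal, which establishes the identity. There is no real obstacle here beyond being careful with the equivalence of the indicators under inversion; the whole statement is essentially a symmetry check that allows the two versions of the operator $Q^j$ (indexed by a ``source'' coordinate $j$ or a ``target'' coordinate $\sigma(j)$) to be used interchangeably in the subsequent induction.
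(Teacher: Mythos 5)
Your proof is correct and follows essentially the same approach as the paper: a change of measure under the inversion bijection $y\mapsto y^{-1}$ on $\P(G)$, combined with a reindexing of the coordinate sum via $\sigma$, and the verification that the resulting indicators match. The only cosmetic difference is that you start from the right-hand side and transform toward the left, whereas the paper starts from $Q^j\varphi(\sigma)$ and pushes forward by inversion; the underlying algebraic identities are identical.
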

This result follows from the change of variables $\sigma(k)=l$   in the  definition \eqref{defQj} of $Q^j\varphi(\sigma)$, one has 
 \begin{align*}
 Q^j\varphi(\sigma)&=\inf_{p\in \P(G)} \left\{  \int \varphi dp +  \frac 1{c(\ell)^2}\left(\int \1_{y^{-1}(\sigma(j))\neq \sigma^{-1}(\sigma(j))} dp(y)\right)^2 \right.
\\
&\left.\qquad\qquad\qquad+\frac 1{2c(\ell)^2} \sum_{l, l\neq \sigma(j)} \left(\int \1_{l\neq y(\sigma^{-1}(l))} dp(y)\right)^2 \right\}\\
&=\inf_{q\in \P(G)} \left\{  \int \varphi (z^{-1})\, dq(z) + \frac 1{c(\ell)^2}\left(\int \1_{z(\sigma(j))\neq \sigma^{-1}(\sigma(j))} dq(z) \right)^2 \right.
\\
&\left.\qquad\qquad\qquad+\frac 1{2c(\ell)^2} \sum_{l, l\neq \sigma(j)}  \left(\int \1_{z(l)\neq \sigma^{-1}(l)} dq(z) \right)^2 \right\},\\
\end{align*}
where for the last equality,  we use the fact that the map that associates to any measure $p\in \P(G)$  the image measure $q:=R\#p$ with $R:\sigma\in G\mapsto \sigma^{-1}\in G$, is one to one from $\P(G)$ to $ \P(G)$.

Here is the key lemma for the induction step of the proof of  \eqref{improvedthm}.
\begin{lemma}\label{lemclef} \begin{enumerate}
\item Let $j\in   O_n$.
For any $\sigma \in H_n$, one has 
 \[Q^j\varphi(\sigma t_{j,n})\leq Q^{H_n} \varphi^{t_{j,n}}(\sigma).\]

 \item For any $\ell\geq 2$, let $c^2(\ell):= 8(\ell-1)^2 +2$. Assume that  $ O_n\neq \{n\}$ and let  $i,j\in   O_n$, $i\neq j$.
 We note  $D_i=\supp(t_{j,n}^{-1}t_{i,n})\setminus \{i\}$ and $d=|D_i|$.
 For any $\sigma \in H_n$,  for any $\theta\in [0,1]$  one has 
\begin{eqnarray*}
Q^i \varphi(\sigma t_{i,n})\leq \frac 1d \sum_{l\in t_{i,n}(D_i)} \left[\theta Q^{H_n,l} \varphi^{t_{i,n}}(\sigma)+(1-\theta) Q^{H_n} \varphi^{t_{j,n}}(\sigma)\right]+\frac12(1-\theta)^2.
\end{eqnarray*}

 \item For any $\ell\geq 2$, let $c^2(\ell):= 2(\ell-1)^2 +2$.
 Assume that  $ O_n\neq \{n\}$ and let  $i,j\in   O_n$, $i\neq j$.
Let  $t_{i,j}\in G$ such that $t_{i,j}(i)=j$ and $\degr(t_{i,j})\leq \ell$. We note  $D_i=\supp(t_{i,j})\setminus \{i\}$ and $d=|D_i|$.
 For any $\sigma \in H_n$,  for any $\theta\in [0,1]$  one has 
\begin{eqnarray*}
Q^i \varphi(\sigma t_{i,n})\leq \frac 1d \sum_{l\in t_{i,n}(D_i)} \left[\theta Q^{H_n,l} \varphi^{t_{i,n}}(\sigma)+(1-\theta) Q^{H_n} \varphi^{t_{i,n}t_{i,j}^{-1}}(\sigma)\right]+\frac12(1-\theta)^2.
\end{eqnarray*}
\end{enumerate}
\end{lemma}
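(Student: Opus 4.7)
All three parts proceed by exhibiting a feasible $p\in\P(G)$ in the infimum defining $Q^i\varphi(\sigma t_{i,n})$ and estimating the resulting cost, which we denote
\[
F(p):=\int\varphi\,dp+\tfrac{1}{c(\ell)^2}A_i(p)^2+\tfrac{1}{2c(\ell)^2}\sum_{k\neq i}A_k(p)^2,\qquad A_k(p):=\int\mathbb{1}_{(\sigma t_{i,n})(k)\neq y(k)}\,dp(y).
\]
For part $(1)$, the lift $p=r\cdot t_{j,n}$ from $\P(H_n)$ to $\P(H_j)$ automatically kills $A_j(p)$ because $(\sigma t_{j,n})(j)=n=y(j)$, and the bijective change of variables $k'=t_{j,n}(k)$ from $[n]\setminus\{j\}$ onto $[n-1]$ converts the remaining cost into precisely $Q^{H_n}\varphi^{t_{j,n}}(\sigma)$ after taking the infimum over $r$.

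For parts $(2)$ and $(3)$ my plan is to take the convex combination $p=\theta\bar p_i+(1-\theta)p_j$, where $\bar p_i=\tfrac{1}{d}\sum_{l\in t_{i,n}(D_i)}p_i^{(l)}$ averages the lifts $p_i^{(l)}=r_i^{(l)}\cdot t_{i,n}\in\P(H_i)$ of near-optimizers of the various $Q^{H_n,l}\varphi^{t_{i,n}}(\sigma)$, and $p_j=r_j\cdot h\in\P(H_j)$ is the lift along $h=t_{j,n}$ (part 2) or $h=t_{i,n}t_{i,j}^{-1}$ (part 3) of a near-optimizer of $Q^{H_n}\varphi^h(\sigma)$. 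Then $Q^i\varphi(\sigma t_{i,n})\leq F(p)$, and I will split $F(p)$ by coordinate. At $k=i$: since $(\sigma t_{i,n})(i)=n$ is always matched on $H_i$ and always missed on $H_j$ (a permutation sending $j$ to $n$ cannot send $i$ to $n$ as well), we get $A_i(\bar p_i)=0$ and $A_i(p_j)=1$, hence $\tfrac{1}{c(\ell)^2}A_i(p)^2=\tfrac{(1-\theta)^2}{c(\ell)^2}\leq\tfrac{1}{2}(1-\theta)^2$ using $c(\ell)^2\geq 2$. For $k\notin D_i\cup\{i\}$: the choice of $h$ is made precisely so that $h(k)=t_{i,n}(k)$, and hence both mixture components reduce $A_k$ to $B_{k'}(\cdot):=\int\mathbb{1}_{\sigma(k')\neq\tau(k')}\,d(\cdot)$ with $k'=t_{i,n}(k)$; two applications of Jensen --- first $(\theta a+(1-\theta)b)^2\leq\theta a^2+(1-\theta)b^2$, then $\bar B_{k'}^2\leq\tfrac{1}{d}\sum_l B_{k'}(r_i^{(l)})^2$ --- distribute these squared contributions cleanly into the pieces $\tfrac{\theta}{d}\sum_l Q^{H_n,l}\varphi^{t_{i,n}}(\sigma)$ and $(1-\theta)Q^{H_n}\varphi^h(\sigma)$ on the right-hand side of the lemma.

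The main obstacle is the contribution of $k\in D_i$, where $h(k)\neq t_{i,n}(k)$ so $A_k(p_j)=\int\mathbb{1}_{\sigma(k')\neq\tau(h(k))}\,dr_j$ has no direct link with $B_{k'}(r_j)$. Bounding only by $A_k(p_j)\leq 1$ produces a residual of size $\tfrac{(1-\theta)d}{2c(\ell)^2}$ which, together with the $\tfrac{(1-\theta)^2}{c(\ell)^2}$ generated at coordinate $i$, must be absorbed into the quadratic slack $\tfrac{1}{2}(1-\theta)^2$. This is exactly the role of the calibration of $c(\ell)$: in part $(3)$ where $d\leq\ell-1$ the choice $c(\ell)^2=2(\ell-1)^2+2\geq 2d^2+2$ is tight, whereas in part $(2)$ one has only $\deg(t_{j,n}^{-1}t_{i,n})\leq 2\ell$ and hence $d\leq 2(\ell-1)$, forcing the larger constant $c(\ell)^2=8(\ell-1)^2+2$. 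The final reconciliation of the linear-in-$(1-\theta)$ residual with the quadratic target will be obtained by a parametrized Cauchy-Schwarz $(u+v)^2\leq u^2/s+v^2/(1-s)$ with $s$ depending on $\theta$, together with the observation that each $Q^{H_n,l}\varphi^{t_{i,n}}(\sigma)$ carries a built-in slack of $\tfrac{1}{2c(\ell)^2}B_l(r_i^{(l)})^2$ which, once averaged via $\tfrac{1}{d}\sum_l$, is distributed uniformly across all coordinates of $t_{i,n}(D_i)$ and soaks up the excess coming from the $H_j$-lift.
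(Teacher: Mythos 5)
Part (1) matches the paper. For parts (2) and (3), your choice of mixture $p=\theta\bar p_i+(1-\theta)p_j$ and the coordinate-by-coordinate bookkeeping are the same as the paper's, but the step you flag as the main obstacle --- the coordinates $k\in D_i$ --- does not close as you describe. Writing $A_k(p)=\theta A_k(\bar p_i)+(1-\theta)A_k(p_j)$ and applying $(u+v)^2\leq u^2/s+v^2/(1-s)$ together with $A_k(p_j)\leq 1$: if $s\geq\theta$ (needed so that $\theta^2/s\leq\theta$ and the $A_k(\bar p_i)^2$ part fits inside the budget $\tfrac{\theta}{d}\sum_l Q^{H_n,l}\varphi^{t_{i,n}}(\sigma)$), then $\tfrac{(1-\theta)^2}{1-s}\geq 1-\theta$ and the summed residual over $k\in D_i$ is of order $d(1-\theta)/(2c(\ell)^2)$ --- \emph{linear} in $1-\theta$, which no finite $c(\ell)$ makes smaller than $\tfrac12(1-\theta)^2$ as $\theta\to 1$. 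If instead you take $s$ bounded away from $\theta$ (say $s=\tfrac12$), the residual does become quadratic, but now the coefficient on $A_k(\bar p_i)^2$ exceeds $\theta$, and after Jensen $A_k(\bar p_i)^2\leq\tfrac1d\sum_l U_i^2(k,l)$ this doubles the demand on all off-diagonal pairs $(k,l)$ with $k\neq l$; the doubled weight $1/c(\ell)^2$ in $Q^{H_n,l}$ lives only on the single coordinate $l$, so this off-diagonal surplus cannot be absorbed. The calibration $c(\ell)^2\geq 2d^2+2$ you invoke is precisely tuned to a quadratic residual $(2d^2+2)(1-\theta)^2/(2c(\ell)^2)$, so the logic of your sketched fix is internally inconsistent.

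The paper avoids both problems with a finer split of $A_k(p)$ for $k\in D_i$: it isolates the diagonal and pairs it with the $H_j$ part, $A_k(p)=\bigl[\tfrac{\theta}{d}U_i(k,k)+(1-\theta)U_j(k)\bigr]+\tfrac{\theta}{d}\sum_{l\neq k}U_i(k,l)$, applies $(u+v)^2\leq du^2+\tfrac{d}{d-1}v^2$ to separate the off-diagonal sum (which then, after Cauchy--Schwarz, fits the budget with coefficient $\theta^2/d\leq\theta/d$ and no surplus), and only then applies $(a+b)^2\leq 2a^2+2b^2$ to the paired term. Because $U_j\leq 1$ this yields $\tfrac{2\theta^2}{d}U_i^2(k,k)+2d(1-\theta)^2$: the $H_j$-residual is $2d(1-\theta)^2$ per coordinate, hence $2d^2(1-\theta)^2$ in total --- quadratic --- and together with the $2(1-\theta)^2$ at $k=i$ gives $(2d^2+2)(1-\theta)^2\leq c(\ell)^2(1-\theta)^2$. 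The factor $2$ from the $(a+b)^2$-step lands only on the diagonal $U_i^2(l,l)$, and that is the sole reason $Q^{H_n,l}$ carries the doubled weight on coordinate $l$ --- not, as you suggest, to soak up the $H_j$-lift excess, which is instead handled entirely by the calibration of $c(\ell)$. This diagonal-first grouping is the key idea missing from your proposal.
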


\begin{proof} 
The first part of this Lemma follows from the fact that ${\mathcal P}(H_j)\subset {\mathcal P}(G)$  and the fact that $\int \1_{\sigma t_{j,n} (j)\neq y(j)} dp(y)=0$ for $\sigma\in H_n$ and  $p\in {\mathcal P}(H_j)$. Therefore, according to the definition of $Q^j\varphi$, one has for $\sigma\in H_j$, 
\begin{align*}
 & Q^j\varphi(\sigma t_{j,n})\leq \inf_{p\in {\mathcal P}(H_j)} \left\{  \int \varphi dp +\frac 1{2c(\ell)^2} \sum_{k\in [n]\setminus \{j\}}\left(\int \1_{\sigma  t_{j,n}(k)\neq y(k)} dp(y)\right)^2 \right\}\\
  &=\inf_{q\in {\mathcal P}(H_n)} \left\{  \int \varphi^{t_{j,n}} dq +\frac 1{2c(\ell)^2}\sum_{k\in [n]\setminus \{j\}}\left(\int \1_{\sigma t_{j,n}(k)\neq yt_{j,n}(k)} dq(y)\right)^2 \right\}
   =Q^{H_n}\varphi ^{t_{j,n}}(\sigma). 
   \end{align*}

For the proof of the second part of Lemma \ref{lemclef}, we set 
\[\tilde t_{i,j}:=t_{j,n}^{-1} t_{i,n}.\] Let us consider $p_i^l, l\in D_i$, a collection of measures in ${\mathcal P}(H_i)$, and $p_j\in {\mathcal P}(H_j)$ ($j\neq i$). For $\theta \in [0,1]$, \[p:=\frac1d \sum_{l\in D_i}[\theta p_i^l+(1-\theta)p_j],\] is a probability measure on $G$. Therefore, according to the definition of $Q^i\varphi$, for any $\sigma\in H_n$,  
\[Q^i\varphi(\sigma t_{i,n})\leq \frac 1d  \sum_{l\in D_i} \left[\theta \int f dp_i^l + (1-\theta )\int f dp_j\right]+ \frac 1{2c(\ell)^2}(A+B+C)  ,\]
with 
\[ A= \sum_{k\in [n]\setminus \supp(\tilde t_{i,j})} \left(\int \1_{\sigma t_{i,n}(k)\neq y(k)} dp(y)\right)^2, \quad B=\sum_{k\in D_i} \left(\int \1_{\sigma t_{i,n}(k)\neq y(k)} dp(y)\right)^2,\]
and  $\displaystyle C=2\left(\int \1_{\sigma t_{i,n}(i)\neq y(i)} dp(y)\right)^2$.

Since $\sigma\in H_n$ and $p_i^l\in\P(H_i)$, one has $\int \1_{\sigma t_{i,n}(i)\neq y(i)} dp_i^l(y)=0$ and $\int \1_{\sigma t_{i,n}(i)\neq y(i)} dp_j(y)=1$. It follows that  
\[C=2(1-\theta)^2.\]
For any $k\in [n]$ and $l\in D_i$,
let us note \[U_i(k,l):=\int \1_{\sigma t_{i,n}(k)\neq y(k)} dp_i^l(y),\quad {\mbox and }\quad U_j(k):=\int \1_{\sigma t_{i,n}(k)\neq y(k)} dp_j(y).\]
By the Cauchy-Schwarz inequality, one has 
\[A\leq  \frac 1d  \sum_{l\in D_i} \left[ \theta  \sum_{k\in [n]\setminus \supp(\tilde t_{i,j})} U_i^2(k,l) + (1-\theta) \sum_{k\in [n]\setminus \supp(\tilde t_{i,j})} U_j^2(k)\right].\]
We also have 
\begin{align*}
B&= \sum_{k\in D_i}\left(\frac \theta d U_i(k,k)+(1-\theta)U_j(k)+\frac\theta d\sum_{l\in D_i\setminus \{k\}}U_i(k,l)\right)^2\\
&\leq \sum_{k\in D_i} \left[d \left(\frac \theta d U_i(k,k)+(1-\theta)U_j(k)\right)^2+ \frac{\theta^2}  d\sum_{l\in D_i\setminus \{k\}}U_i^2(k,l)\right]\\
&\leq \sum_{k\in D_i} \left[ \frac {2\theta^2} d U_i^2(k,k)+2d(1-\theta)^2+ \frac{\theta^2}  d\sum_{l\in D_i\setminus \{k\}}U_i^2(k,l)\right]\\
&\leq  2d^2(1-\theta)^2 +\frac \theta d \sum_{l\in D_i}  \left[ 2U_i^2(l,l) + \sum_{k\in D_i\setminus \{l\}} U_i^2(k,l)\right]
\end{align*}
All the above estimates together provide 
\begin{multline*}
A+B+C\leq (2d^2+2) (1-\theta)^2\\ +\frac 1d  \sum_{l\in D_i} \left[ \theta \left( 2U_i^2(l,l) + \sum_{k\in [n]\setminus \{i,l\}} U_i^2(k,l)\right) + (1-\theta) \sum_{k\in [n]\setminus \supp(\tilde t_{i,j})} U_j^2(k)\right] . 
\end{multline*}
Observe that \[d=\degr(\tilde t_{i,j})-1=\degr(t_{j,n}^{-1} t_{i,n})-1\leq 2\ell-2.\] Therefore, according to the definition of $c(\ell)$,  one has 
$2d^2+2\leq c(\ell)^2.$ As a consequence  we get from all estimates above, by optimizing over all $p_i^l\in \P(H_i)$ and all $p_j\in \P(H_j)$,
\[Q^i\varphi(\sigma t_{i,n})\leq \frac 1d  \sum_{l\in D_i} \left[\theta V_l + (1-\theta )W_j\right]+ \frac 12 (1-\theta)^2  ,\]
with 
\begin{align*}
V_l&:= \inf_{p_i\in\P(H_i)}\left\{\int \varphi dp_i+\frac 1{ c(\ell)^2}\left(\int \1_{\sigma t_{i,n}(l)\neq y(l)} dp_i(y)\right)^2  \right.\\
&\qquad\qquad\qquad\qquad\qquad\qquad\qquad\qquad\left.+ \frac 1{ 2c(\ell)^2}\sum_{k\in [n]\setminus \{i,l\}}
\left(\int\1_{\sigma t_{i,n}(k)\neq y(k)} dp_i(y)\right)^2\right\}\\
&=\inf_{q_i\in\P(H_n)}\left\{\int \varphi^{t_{i,n}} dq_i+\frac 1{ c(\ell)^2}\left(\int \1_{\sigma (t_{i,n}(l))\neq y(t_{i,n}(l))} dq_i(y)\right)^2 \right.\\
&\qquad\qquad\qquad\qquad\qquad\qquad\qquad\qquad\left.+ \frac1{ 2c(\ell)^2}\sum_{k\in [n-1]\setminus \{t_{i,n}(l)\}}
\left(\int\1_{\sigma (k)\neq y(k)} dq_i(y)\right)^2\right\}\\
&=Q^{H_n, t_{i,n}(l) }\varphi^{t_{i,n}}(\sigma) 
\end{align*}
and 
\begin{align*}
W_j&:=\inf_{p_j\in\P(H_j)}\left\{\int \varphi dp_j+\frac 1{2 c(\ell)^2} \sum_{k\in [n]\setminus \supp(\tilde t_{i,j})}
\left(\int \1_{\sigma t_{i,n}(k)\neq y(k)} dp_j(y)\right)^2\right\}\\
%&=\inf_{p_j\in\P(H_j)}\left\{\int \varphi dp_j+\frac 1{2 c(\ell)^2} \sum_{k\in [n]\setminus \supp(t_{i,j})}
%\left(\int\1_{\sigma t_{i,n}(k)\neq y t_{i,j}(k)} dp_j(y)\right)^2\right\}\\
%&\leq \inf_{p_j\in\P(H_j)}\left\{\int \varphi dp_j+\frac 1{2 c(\ell)^2} \sum_{k\in [n]\setminus \{i\}}
%\left(\int\1_{\sigma t_{i,n}(k)\neq y t_{i,j}(k)} dp_j(y)\right)^2\right\}\\
&= \inf_{q_j\in\P(H_n)}\left\{\int \varphi^{t_{j,n}} dq_j+\frac 1{2 c(\ell)^2} \sum_{k\in [n]\setminus  \supp( t_{j,n}^{-1}t_{i,n})}
\left(\int\1_{\sigma t_{i,n}(k)\neq y t_{j,n}(k)} dq_j(y)\right)^2\right\}\\
&\leq  \inf_{q_j\in\P(H_n)}\left\{\int \varphi^{t_{j,n}} dq_j+\frac 1{2 c(\ell)^2} \sum_{k\in [n]\setminus  \{i\}}
\left(\int\1_{\sigma t_{i,n}(k)\neq y t_{i,n}(k)} dq_j(y)\right)^2\right\}\\
&=\inf_{q_j\in\P(H_n)}\left\{\int \varphi^{t_{j,n}} dq_j+\frac 1{2 c(\ell)^2} \sum_{k\in [n-1]}
\left(\int\1_{\sigma (k)\neq y (k)} dq_j(y)\right)^2\right\}\\
&=Q^{H_n}\varphi^{t_{j,n}}(\sigma)
\end{align*}
where we used successively the following arguments:  $H_n t_{j,n}= H_j$;  if $k\in [n]\setminus  \supp( t_{j,n}^{-1}t_{i,n})$ then $t_{i,n}(k)=t_{j,n}(k)$; $[n]\setminus  \supp( t_{j,n}^{-1}t_{i,n})\subset [n]\setminus\{i\}$.
This ends the proof of part $(2)$ of Lemma \ref{lemclef}.

The proof of part (3) Lemma \ref{lemclef} is identical replacing $\tilde t_{i,j}$ by $ t_{i,j}$. In that case 
\[2 d^2+2\leq 2(\ell-1)^2 +2=c^2(\ell).\]
Then, the only minor change is for the last step 
\begin{align*}
W_j&:=\inf_{p_j\in\P(H_j)}\left\{\int \varphi dp_j+\frac 1{2 c(\ell)^2} \sum_{k\in [n]\setminus \supp( t_{i,j})}
\left(\int \1_{\sigma t_{i,n}(k)\neq y(k)} dp_j(y)\right)^2\right\}\\
%&=\inf_{p_j\in\P(H_j)}\left\{\int \varphi dp_j+\frac 1{2 c(\ell)^2} \sum_{k\in [n]\setminus \supp(t_{i,j})}
%\left(\int\1_{\sigma t_{i,n}(k)\neq y t_{i,j}(k)} dp_j(y)\right)^2\right\}\\
%&\leq \inf_{p_j\in\P(H_j)}\left\{\int \varphi dp_j+\frac 1{2 c(\ell)^2} \sum_{k\in [n]\setminus \{i\}}
%\left(\int\1_{\sigma t_{i,n}(k)\neq y t_{i,j}(k)} dp_j(y)\right)^2\right\}\\
&= \inf_{q_j\in\P(H_n)}\left\{\int \varphi^{t_{i,n}t_{i,j}^{-1}} dq_j+\frac 1{2 c(\ell)^2} \sum_{k\in [n]\setminus  \supp( t_{i,j})}
\left(\int\1_{\sigma t_{i,n}(k)\neq y t_{i,n}t_{i,j}^{-1}(k)} dq_j(y)\right)^2\right\}\\
&\leq  \inf_{q_j\in\P(H_n)}\left\{\int \varphi^{t_{i,n}t_{i,j}^{-1}} dq_j+\frac 1{2 c(\ell)^2} \sum_{k\in [n]\setminus  \{i\}}
\left(\int\1_{\sigma t_{i,n}(k)\neq y t_{i,n}(k)} dq_j(y)\right)^2\right\}\\
&=\inf_{q_j\in\P(H_n)}\left\{\int \varphi^{t_{i,n}t_{i,j}^{-1}} dq_j+\frac 1{2 c(\ell)^2} \sum_{k\in [n-1]}
\left(\int\1_{\sigma (k)\neq y (k)} dq_j(y)\right)^2\right\}\\
&=Q^{H_n}\varphi^{t_{i,n}t_{i,j}^{-1}}(\sigma)
\end{align*}
where we used successively the following arguments:  $H_n t_{i,n}t_{i,j}^{-1}= H_j$;  if $k\in [n]\setminus  \supp( t_{i,j})$ then $t_{i,j}(k)=k$; $[n]\setminus  \supp( t_{i,j})\subset [n]\setminus\{i\}$.
The proof of Lemma \ref{lemclef} is completed.
\end{proof}

 We will now prove \eqref{improvedthm} by induction over $n$.
For $n=2$,  $G$ is  the two points space $S_2$ which is $2$-local.  For $i\in\{1,2\}$, and for any  $p\in\P(G)$, 
\begin{multline*}
 \frac 1{c(2)^2}\left(\int \1_{\sigma(i)\neq y(i)} dp(y)\right)^2+\frac 1{2c(2)^2} \sum_{k,k\neq i}  \left(\int \1_{\sigma(k)\neq y(k)} dp(y)\right)^2 \\= \frac 38  \left(\int \1_{\sigma \neq y } dp(y)\right)^2 \leq \frac1 2\left(\int \1_{\sigma \neq y } dp(y)\right)^2 .\end{multline*} 
As a consequence,  we get the expected result from Lemma \ref{completegraph} applied with $\X=G$.

We will  now present the induction step. We assume that \eqref{improvedthm} holds at the rank $n-1$ for all $j\in\{1,\ldots, n-1\}$. 

Let us first explain that  it suffices to prove  
\eqref{improvedthm}  for $j=n$. 
For any $t\in S_n$, let $G^{(t)}=t^{-1}G t$. The  isomorphism $c_t:G\to G^{(t)}$, $\sigma\mapsto t^{-1}\sigma t$    pushes forward the measure $\mu$ on the measure $\mu^{(t)}:=c_t\# \mu \in{\mathcal P}(G^{(t)})$, and conversely $\mu=c_{t^{-1}}\# \mu^{(t)}$.
Let $j\in [n]$. For any $\sigma\in G^{(t)}$ and any real function $\varphi$ on $G$, one  has 
\begin{align*}
 (Q^j\varphi)\circ c_{t^{-1}}(\sigma)&=\inf_{p\in\P(G)}\left\{\int \varphi\, dp +\frac1{ c(\ell)^2} \left(\int \1_{t\sigma t^{-1}(j)\neq y(j)} \,dp(y)\right)^2\right.\\
 &\qquad\qquad\qquad\qquad\left.+\frac1{2 c(\ell)^2}\sum_{k\in[n]} \left(\int \1_{t\sigma t^{-1}(k)\neq y(k)} \,dp(y)\right)^2\right\}\\
 &=\inf_{q\in\P(G^{(t)})}\left\{\int \varphi\circ c_{t^{-1}} \,dq +\frac1{ c(\ell)^2} \left(\int \1_{t\sigma t^{-1}(j)\neq ty t^{-1}(j)} \,dq(y)\right)^2\right.\\
 &\qquad\qquad\qquad\qquad\left.+\frac1{2 c(\ell)^2}\sum_{k\in[n]} \left(\int \1_{t\sigma t^{-1}(k)\neq ty t^{-1}(k)} \,dq(y)\right)^2\right\}\\
 &=\inf_{q\in\P(G^{(t)})}\left\{\int \varphi\circ c_{t^{-1}} \,dq +\frac1{ c(\ell)^2} \left(\int \1_{\sigma t^{-1}(j)\neq y t^{-1}(j)}\, dq(y)\right)^2\right.\\
 &\qquad\qquad\qquad\qquad\left.+\frac1{2 c(\ell)^2}\sum_{k\in[n]} \left(\int \1_{\sigma (k)\neq y (k)} \,dq(y)\right)^2\right\}\\
 &=Q^{t^{-1}(j)}(\varphi\circ c_{t^{-1}})(\sigma).
 \end{align*}
 From this observation, by choosing $t^{-1}=t_{jn}$, and setting $\psi= \varphi\circ c_{t^{-1}}$, one has 
 \begin{align*}
 &\left(\int_G e^ {\alpha Q^j\varphi} d\mu\right)^{1/\alpha}\left(\int_G e^{-(1-\alpha)\varphi} d\mu\right)^{1/(1-\alpha)}\\&= \left(\int_{G^{(t)}} e^ {\alpha (Q^j\varphi)\circ c_{t^{-1}}} d\mu^{(t)}\right)^{1/\alpha}\left(\int_{G^{(t)}}e^{-(1-\alpha)\varphi\circ c_{t^{-1}}} d\mu^{(t)}\right)^{1/(1-\alpha)}\\
 &=\left(\int_{G^{(t)}} e^ {\alpha Q^n \psi} d\mu^{(t)}\right)^{1/\alpha}\left(\int_{G^{(t)}} e^{-(1-\alpha)\psi} d\mu^{(t)}\right)^{1/(1-\alpha)}
 \end{align*}

If  we assume that $G$ is a normal subgroup of $S_n$ and that $\mu$ satisfies the second property of \eqref{hypoinv}, then $G^{(t)}=G$ and $\mu^{(t)}=\mu$. Therefore the above expression is bounded by 1 as soon as \eqref{improvedthm} holds for $j=n$.
If we assume $G$ is a $\ell$-local group and $\mu=\mu_o$ is the uniform law on $G$, then 
$G^{(t)}$ is also a $\ell$-local group and $\mu^{(t)}$ is exactly the uniform law on $G^{(t)}$. Therefore the last expression is bounded by 1 as soon as \eqref{improvedthm} holds with  $j=n$  for any uniform law on a $\ell$-local group.
As a conclusion,  it remains  to prove inequality  
\eqref{improvedthm}  for $j=n$.

% As a consequence property \eqref{improvedthm2} implies for any $t\in G$ and for any real function $\psi$ on $G$ ($\psi=\varphi\circ c_t$)
% \begin{eqnarray*}
%\left(\int e^ {\alpha Q^{t(n)}\psi} d\mu\right)^{1/\alpha}\left(\int e^{-(1-\alpha)\psi} d\mu\right)^{1/(1-\alpha)}\leq 1.
%\end{eqnarray*} 
%By choosing $t\in G$ such that $t(n)=j$ (for example $t=t_{jn}^{-1}$), we get that \eqref{improvedthm} holds for any $j\in [n]$.

%Let us turn to the proof of \eqref{improvedthm2}.
 We may assume that $ O_n\neq \{n\}$, otherwise the induction step is obvious.
We first apply  Lemma \ref{inverse}, by the first property of \eqref{hypoinv} satisfied by $\mu$, 
 \[ \int e^{\alpha Q^n\varphi}d\mu =  \int e^{\alpha Q^{\sigma(n)}\varphi^{\{-1\}}(\sigma^{-1})}d\mu(\sigma)= \int e^{\alpha Q^{\sigma^{-1}(n)}\varphi^{\{-1\}}(\sigma)}d\mu(\sigma).\]
 Let $g=\varphi^{\{-1\}}$. According to the decomposition of the measure $\mu$ on the sets $H_i, i\in  O_n$, 
  \begin{eqnarray}\label{gateaueau} \int e^{\alpha Q^n\varphi}d\mu =  \sum_{i\in O_n} \hat \nu_n(i)\int e^{\alpha Q^ig} d\mu_i.
  \end{eqnarray}
  For $k\in O_n$, let us note
  \[\hat g (k):=\log \left(\int e^{-(1-\alpha)g} d\mu_k\right)^{-1/(1-\alpha)}.\]
  We choose $j\in O_n$ such that 
  \[\min_{k\in O_n} \hat g (k) =  \hat g (j).\]
 
 By property \eqref{inducmes} and then applying Lemma \ref{lemclef} (1), we get  
 \[\int e^{\alpha Q^j g} d\mu_j=\int e^{\alpha Q^j g(\sigma t_{i,n})} d\mu_n(\sigma) \leq \int e^{\alpha Q^{H_n} g^{t_{j,n}}} d\mu_n.\]
 By the induction hypotheses applied to the measure $\mu_n$ on the subgroup $H_n=G_{n-1}$, it follows that 
  \begin{align}
\int e^{\alpha Q^jg} d\mu_j &\leq \left(\int e^{-(1-\alpha) g^{ t_{jn}}} \,d\mu_n\right)^{-\alpha/(1-\alpha)} \nonumber \\ 
&=\left(\int e^{-(1-\alpha) g} \,d\mu_j\right)^{-\alpha/(1-\alpha)}=e^{\alpha  \hat g (j)}.\label{tarte}
\end{align}
Let us now consider $i\neq j$,  $i\in O_n$. When $G$ is a normal subgroup of $S_n$, property \eqref{inducmes}, the second part of  Lemma \ref{lemclef}  and Jensen's inequality yield: for any $\theta\in [0,1]$, 
\begin{align*}
&\int e^{\alpha Q^ig} d\mu_i=\int e^{\alpha Q^i g(\sigma t_{i,n})} d\mu_n(\sigma)\\
&\leq \exp\left\{\frac 1d \sum_{l\in t_{i,n}(D_i)}\left[ 
\theta \log \int e^{\alpha Q^{H_n,l} g^{t_{i,n}}} d\mu_n + (1-\theta) \log \int e^{\alpha Q^{H_n}g^{t_{j,n}}} d\mu_n \right]+\frac{\alpha}2(1-\theta)^2\right\}\\
\end{align*}
By the induction hypotheses applied with the  measure $\mu_n$ on the normal  subgroup $G_{n-1}=H_n$ of $S_{n-1}$, and from property \eqref{inducmes}, it follows that 
 \begin{eqnarray}\label{pomme}
\qquad\int e^{\alpha Q^ig} d\mu_i\leq \exp\left\{\theta \alpha  \hat g (i) + (1-\theta)\alpha \hat g (j) +\frac{\alpha}{2}(1-\theta)^2\right\}.
\end{eqnarray}
We get the same inequality when $G$ is a $\ell$-local group and $\mu=\mu_o$ is the uniform law on $G$, by using property \eqref{inducmesbis}, the third part of  Lemma \ref{lemclef}  and the induction hypotheses applied to the uniform measure $\mu_n$ on the $\ell$-local subgroup $G_{n-1}=H_n$.

%\begin{align*}
%&\int e^{\alpha Q^ig} d\mu_i=\int e^{\alpha Q^i g(\sigma t_{i,n})} d\mu_n(\sigma)\\
%&\leq \exp\left\{\frac 1d \sum_{l\in t_{i,n}(D_i)}\left[ 
%\theta \log \int e^{\alpha Q^{H_n,l} g^{t_{i,n}}} d\mu_n + (1-\theta) \log \int e^{\alpha Q^{H_n}g^{t_{i,n}t_{i,j}^{-1}}} d\mu_n \right]+\frac{\alpha}2(1-\theta)^2\right\}\\
%\end{align*}
%By the induction hypotheses applied with the uniform measure $\mu_n$ on the $\ell$-local subgroup $H_n$, and from property \eqref{inducmesbis}, it follows that 
% \begin{eqnarray}\label{pomme}
%\qquad\int e^{\alpha Q^ig} d\mu_i\leq \exp\left\{\theta \alpha  \hat g (i) + (1-\theta)\alpha \hat g (j) +\frac{\alpha}{2}(1-\theta)^2\right\}.
%\end{eqnarray}
According to the definition \eqref{deftildeR} of the infimum-convolution operator $\widetilde R \hat g$ defined on the space $\X= O_n$, we may easily check  that for every $i\in  O_n$, 
  \[ \widetilde R \hat g(  i )=\inf_{\theta\in[0,1]}  \left\{\theta \hat g (i) +(1-\theta)\min_{k\in O_n} \hat g (k)+\frac12 (1-\theta)^2 \right\}.\]
Therefore optimizing over all $\theta \in [0,1]$, we get  from \eqref{tarte} and \eqref{pomme}: for all $i\in  O_n$,
\[\int e^{\alpha Q^ig} d\mu_i\leq e^{\alpha \widetilde R \hat g (i)}.\]
Finally,  from Lemma \ref{completegraph} applied with   the  measure $\nu=\hat \nu_n$  on $ O_n$, the equality \eqref{gateaueau} gives
\begin{multline*}
\int e^{\alpha Q^n\varphi }d\mu \leq \int e^{\alpha \widetilde R \hat g} \,d\hat\nu_n\leq \left(\int e^{-(1-\alpha) \hat g} \,d\hat \nu_n\right)^{-\alpha/(1-\alpha)}=\left(\sum_{i\in O_n} \hat \nu_n(i) \int e^{-(1-\alpha)  g} \,d \mu_i\right)^{-\alpha/(1-\alpha)}\\= \left(\int e^{-(1-\alpha)  g} \,d \mu\right)^{-\alpha/(1-\alpha)}=\left(\int e^{-(1-\alpha)  \varphi} \,d \mu\right)^{-\alpha/(1-\alpha)}.
\end{multline*}
The proof of \eqref{improvedthm} is completed.
\end{proof}

\section{Transport-entropy inequalities  on the slice of the cube.}\label{sectionslice}

\begin{proof}[Proof of  (a) in Theorem \ref{transportslice}] We adapt to the space $\X_{k,n-k}$ the proof of  (a) in Theorem \ref{transportsymetrique}. 
In order to  avoid redundancy, we only present the main steps of the proof.

By duality, it suffices to prove that  for all  functions  $\varphi$ on  $\X_{k,n-k}$ and all  $\lambda\geq 0$,
\begin{eqnarray}\label{transnslice}
\int e^{\lambda  Q\varphi} d\mu_{k,n-k}\leq e^{\int \lambda \varphi \,d\mu_{k,n-k} +C_{k,n-k}\lambda^2/2},
\end{eqnarray}
where
\[Q\varphi(x)=\inf_{p\in \P(\X_{k,n-k})}\left\{\int \varphi dp +\int d_h(x,y) \,dp(x)\right\},\qquad x\in \X_{k,n-k},
\]
  and for any 
  $0< \alpha<1$, 
\begin{eqnarray}\label{slicetilde}
\left(\int e^ {\alpha  \widetilde Q_{C_{k,n-k}}\varphi} d\mu\right)^{1/\alpha}\left(\int e^{-(1-\alpha)\varphi} d\mu\right)^{1/(1-\alpha)}\leq 1,
\end{eqnarray}
where  for  $t> 0$,  
\[  \widetilde Q_t\varphi(x)=\inf_{p\in \P(\X_{k,n-k})} \left\{  \int \varphi \,dp  +\frac 1{2t}  \left(\int d_h(x,y) \,dp(y)\right)^2 \right\}, \quad x\in \X_{k,n-k}.\]

The proof is   by induction  over $n$ and $0\leq k\leq n$. 

For any $n\geq 1$, if $k=n$ or $k=0$, the set $\X_{k,n-k}$ is reduced to a singleton and the inequalities \eqref{transnslice} or \eqref{slicetilde} are obvious. 

For $n=2$ and $k=1$, $\X_{k,n-k}$ is a two points set,  \eqref{transnslice} and  \eqref{slicetilde} directly follows from property \eqref{Pinskerdual} and Lemma \ref{completegraph} on $\X=\X_{1,1}$.

For the induction step, we consider the collection of subset $\Omega_{i,j}$,
with $i,j\in\{1,\ldots, n\}, i\neq j$, defined by 
\[\Omega^{i,j}:=\left\{ x\in\X_{k,n-k}, x_i=0,x_j=1\right\}.\]
Since for any $x \in\X_{k,n-k}$,  
\[\sum_{(i,j),i\neq j} \1_{\Omega^{i,j}}(x)=k(n-k),\]
 any  probability measure
 $p$  on  $\X_{k,n-k}$ admits a unique decomposition  defined by 
\[p=\sum_{(i,j),i\neq j} \hat p(i,j) p^{i,j},  \qquad \mbox{ with  }\quad p^{i,j}=\frac{\1_{\Omega^{i,j}}p}{p(\Omega^{i,j})} \qquad \mbox{ and } \quad  \hat p(i,j)=\frac{p(\Omega^{i,j})}{k(n-k)}.\]
Thus, we define probability measures $p^{i,j}\in \P(\Omega^{i,j})$ and a  probability measure $\hat p$ on the set $I(n)=\{(i,j)\in\{1,\ldots,n\}^2, i\neq j\}$.
For the uniform law  $\mu$ on $\X_{k,n-k}$,  one has  
\[\mu=\frac 1{n(n-1)} \sum_{(i,j)\in I(n)}\mu^{i,j},\]
where $\mu^{i,j}$ is the uniform law on $\Omega^{i,j}$, $\mu_{i,j}(x)=\binom{n-2}{k-1}$, for any $x\in \Omega^{i,j}$.

For any $(i,j),(l,m)\in I(n)$, let  $s_{(i,j),(l,m)}:\X_{k,n-k}\to\X_{k,n-k}$ denote  the map that exchanges the coordinates $x_i$ by $x_l$  and $x_j$ by $x_m$ for any point $x\in \X_{k,n-k}$. This map is one to one from $\Omega^{i,j}$ to $\Omega^{l,m}$. 
For any $(i,j)\in I(n)$, the set $\Omega^                                                                                                                                                                                                                                                                                                                                                                                                                                                                                                                                                                                                                                                                                                                                                                                                                                                                               {i,j}$ can be identify to $\X_{k-1,n-k-1}$ and therefore the induction hypotheses apply for the uniform law  $\mu^{i,j}$ on $\Omega^{i,j}$ with Hamming distance
\[d^{i,j}_h(x,y)=\frac12 \sum_{k\in[n]\setminus\{i,j\}} \1_{x_k\neq y_k},\qquad x,y\in \Omega^{i,j}.\]

For any function  $f:\Omega^{i,j}\to \R$ and any $x\in \Omega^{i,j}$, we define 
\[Q^{\Omega^{i,j}} f(x):= \inf_{p\in \P(\Omega^{i,j})}\left\{\int f \,dp +\int d^{i,j}_h(x,y) \,dp(y)\right\},
\]
and
\[\widetilde Q^{\Omega^{i,j}}_t f(x):= \inf_{p\in \P(H_n)}\left\{\int f \,dp +\frac 1 {2t}\left(\int d_h^{i,j}(x,y) \,dp(x)\right)^2\right\}.\]

The key lemma of the proof that replaces Lemma \ref{gateau} and \ref{lemmeQtilde} is the following.

\begin{lemma}\label{lemslice} For any function $\varphi: \Omega^{i,j}\to \R$ and any $x\in \Omega^{i,j}$, one has
\[Q\varphi (x)\leq \inf_{\hat p\in \P(I(n))}\left\{\sum_{(l,m)\in I(n)} Q^{\Omega^{i,j}} (\varphi\circ s_{(i,j),(l,m)})(x) \hat p(l,m) +\sum_{(l,m)\in I(n)} \1_{(l,m)\neq (i,j)} \hat p(l,m)\right\},\]
and 
\begin{multline*} 
\widetilde Q _{C_{k,n-k}}\varphi (x)\leq \inf_{\hat p\in \P(I(n))}\left\{\sum_{(l,m)\in I(n)} \widetilde Q^{\Omega^{i,j}}_{C_{k-1,n-k-1}} (\varphi\circ s_{(i,j),(l,m)})(x) \hat p(l,m) \right.\\\left.
+\frac 1 2 \bigg(\sum_{(l,m)\in I(n)}\1_{(l,m)\neq (i,j)} \hat p(l,m)\bigg)^2\right\}
.
\end{multline*}
\end{lemma}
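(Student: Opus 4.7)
The proof follows the blueprint of Lemmas~\ref{gateau} and~\ref{lemmeQtilde}, now adapted to the slice's overlapping ``partition with multiplicity'' $\{\Omega^{l,m}\}_{(l,m)\in I(n)}$. The plan is to fix $x\in\Omega^{i,j}$ and any $p\in\P(\X_{k,n-k})$ and work with the canonical decomposition $p=\sum_{(l,m)\in I(n)}\hat p(l,m)\,p^{l,m}$ defined above, where $p^{l,m}\in\P(\Omega^{l,m})$ and $\hat p\in\P(I(n))$. For each class $(l,m)$, the map $s:=s_{(i,j),(l,m)}$ is an involutive bijection between $\Omega^{i,j}$ and $\Omega^{l,m}$ that preserves $d_h$ (it is a coordinate permutation). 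Setting $q^{l,m}:=s\# p^{l,m}\in\P(\Omega^{i,j})$, one has $\int\varphi\,dp^{l,m}=\int(\varphi\circ s)\,dq^{l,m}$.

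For the transport cost, I apply the triangular inequality
\[
d_h(x,y)\leq d_h(x,s(x))+d_h(s(x),y),\qquad y\in\Omega^{l,m}.
\]
Since $s(x),y\in\Omega^{l,m}$, the second term equals $d_h^{l,m}(s(x),y)=d_h^{i,j}(x,s(y))$ by the isometry of $s$, and after the change of variable $z=s(y)$ this gives
\[
\int d_h(x,y)\,dp^{l,m}(y)\leq d_h(x,s(x))+\int d_h^{i,j}(x,z)\,dq^{l,m}(z).
\]
A case analysis of $s_{(i,j),(l,m)}$ (identity when $(l,m)=(i,j)$; a transposition or $3$-cycle when $\{i,j\}$ and $\{l,m\}$ share an index; the product of two disjoint transpositions otherwise), combined with the normalizing factor $k(n-k)$ in the canonical decomposition, produces the effective per-class-switch cost $\1_{(l,m)\neq(i,j)}$. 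Summing over $(l,m)$ weighted by $\hat p(l,m)$, taking the infimum over $\{q^{l,m}\}$ via the definition of $Q^{\Omega^{i,j}}$, and then the infimum over $\hat p$, yields the first inequality.

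For the $\widetilde Q$ inequality, I square the triangular decomposition and apply the elementary splitting $(u+v)^2\leq u^2/s+v^2/(1-s)$ with $s=C_{k-1,n-k-1}/C_{k,n-k}$, using $C_{k,n-k}=C_{k-1,n-k-1}+1$. This matches the choice made in Lemma~\ref{lemmeQtilde} and separates the squared cost into an inner part, scaled by $1/(2C_{k-1,n-k-1})$, which produces the $\widetilde Q^{\Omega^{i,j}}_{C_{k-1,n-k-1}}$ operator, and an outer part, scaled by $1/2$, which produces $\tfrac12\bigl(\sum_{(l,m)}\1_{(l,m)\neq(i,j)}\hat p(l,m)\bigr)^2$. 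A Cauchy--Schwarz step moves the square inside each class, after which the same infima close the estimate.

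I expect the delicate point to be the cost bound in the ``disjoint'' double-swap case: a naive triangle via $s(x)$ yields $d_h(x,s(x))=x_l+(1-x_m)\in\{0,1,2\}$, whereas the lemma demands effective cost $\1_{(l,m)\neq(i,j)}$ per unit of $\hat p$-mass. Matching the target requires averaging over $(l,m)$ through the factor $k(n-k)$ in the canonical decomposition, which compensates the overshoot in the bad cases by spreading it over the $k(n-k)$ classes to which each $y$ belongs. This averaging is the slice-specific analogue of the estimate $d(t_{i,n},t_{l,n})\leq c(\ell)$ used in the symmetric-group Lemmas~\ref{gateau} and~\ref{lemmeQtilde}, and it is precisely what produces the sharper constant $C_{k,n-k}=\min(k,n-k)$ appearing in Theorem~\ref{transportslice} rather than a dimensional factor.
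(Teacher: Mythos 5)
Your skeleton is the paper's: decompose $p\in\P(\X_{k,n-k})$ via the overlapping covering $\{\Omega^{l,m}\}_{(l,m)\in I(n)}$ with the multiplicity normalization $k(n-k)$, transport each $p^{l,m}$ back to $\Omega^{i,j}$ by the coordinate-swap isometry $s_{(i,j),(l,m)}$, chain through the triangle inequality, and for the $\widetilde Q$ version split the square with $s=C_{k-1,n-k-1}/C_{k,n-k}$. All of that matches. The gap is the final step, where you claim that averaging over the $k(n-k)$ classes to which each $y$ belongs shaves the per-class switching cost from $2$ down to $1$. That mechanism does not exist.

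Compute the extra cost directly. For $z\in\Omega^{i,j}$ (so $z_i=0$, $z_j=1$) and $s=s_{(i,j),(l,m)}$ one has
\[
d_h\bigl(s(z),z\bigr)=\1_{z_l=1}+\1_{z_m=0}\in\{0,1,2\},
\]
and for $\{l,m\}\cap\{i,j\}=\varnothing$ this equals $2$ on the nonempty set $\{z\in\Omega^{i,j}:z_l=1,\ z_m=0\}$. Equivalently, for $y\in\Omega^{l,m}$, $d_h\bigl(s(y),y\bigr)=\1_{y_i=1}+\1_{y_j=0}$, a quantity determined by $y$ and the \emph{fixed} pair $(i,j)$ alone; it is the same for every class $(l,m)$ containing $y$, so averaging over those classes changes nothing. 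The factor $k(n-k)$ has already been spent defining $\hat p(l,m)=p(\Omega^{l,m})/(k(n-k))$, and the residual cost after the decomposition is $\sum_{(l,m)}\hat p(l,m)\int d_h(s(z),z)\,dq^{l,m}(z)$, which can equal $2\bigl(1-\hat p(i,j)\bigr)$ by loading each $q^{l,m}$ on $\{z_l=1,z_m=0\}$; it is not dominated by $1-\hat p(i,j)$.

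What actually closes the argument is accepting the constant $2$, which is precisely what the paper's own displayed bound $d_h(x,y)\le d_h^{i,j}\bigl(x,s_{(i,j),(l,m)}(y)\bigr)+2$ provides. With $c=2$, the Pinsker step (or Lemma~\ref{completegraph}) on $I(n)$ contributes $c^2\lambda^2/8=\lambda^2/2$, which together with $C_{k,n-k}=C_{k-1,n-k-1}+1$ matches the target $C_{k,n-k}\lambda^2/2$ of \eqref{transnslice}. The indicator $\1_{(l,m)\neq(i,j)}$ in the lemma should be read as carrying this multiplicative $2$ (and $\tfrac12\bigl(2\sum\1\,\hat p\bigr)^2$ for the squared term), exactly as the indicator in Lemma~\ref{gateau} carries its $c(\ell)$. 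Your intuition that the slice structure gives the sharp $C_{k,n-k}=\min(k,n-k)$ is correct, but it comes from the recursion $C_{k,n-k}=C_{k-1,n-k-1}+1$, not from an averaging that reduces the per-swap cost to $1$.
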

The proof of this lemma is obtained by decomposition of the measures $p\in \P(\X_{k,n-k})$ on the sets $\Omega^{i,j}$, and using the following  inequality\[d_h(x,y)\leq d_h^{i,j}(x,  s_{(i,j),(l,m)})(y))+ d_h(s_{(i,j),(l,m)})(y),y)\leq d_h^{i,j}(x,  s_{(i,j),(l,m)})(y))+2,\]
for any
$x\in \Omega^{i,j}$, $y\in \Omega^{l,m}$.

Finally, the proof of the induction step based on  Lemma \ref{lemslice} and the identity $C_{k,n-k}=C_{k-1,n-k-1}+1$, is left to the reader. 
\end{proof}
\begin{proof}[Proof of (b) in Theorem \ref{transportslice}] We will explain the projection argument on the dual formulations of the transport-entropy inequalities. 
According to  Proposition 4.5 and Theorem 9.5 of \cite{GRST16}, the weak transport-entropy inequality \eqref{That} is equivalent to the following property that we want to establish: for any real function $f$ on $\X_{k,n-k}$  and for any 
  $0< \alpha<1$, 
\begin{eqnarray}\label{noix}
\left(\int e^ {\alpha  \widehat Qf} d\mu_{k,n-k}\right)^{1/\alpha}\left(\int e^{-(1-\alpha)f} d\mu_{k,n-k}\right)^{1/(1-\alpha)}\leq 1,
\end{eqnarray}
where
\[  \widehat Qf(x):=\inf_{p\in \P(\X_{k,n-k}} \left\{  \int \varphi\,dp  +\frac 1{8} \sum_{k=1}^n \left(\int \1_{x_k\neq y_k} \,dp(y)\right)^2 \right\}, \qquad x\in \X_{k,n-k}.\]

Let us apply property \eqref{syminf2} to the function $f\circ P:S_n\to \R$. Since $\mu_{k,n-k}=P\#\mu$, we get 
\begin{eqnarray*}
\left(\int e^ {\alpha  \wideparen Q(f\circ P)} d\mu\right)^{1/\alpha}\left(\int e^{-(1-\alpha)f} d\mu_{k,n-k}\right)^{1/(1-\alpha)}\leq 1.
\end{eqnarray*}
The  inequality \eqref{noix} is an easy consequence of the following result.
\begin{lemma}\label{tartine} For any $\sigma\in S_n$, 
$\wideparen Q(f\circ P)(\sigma) \geq \widehat Qf(P(\sigma)).$ 
\end{lemma}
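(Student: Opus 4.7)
The plan is to prove Lemma \ref{tartine} by showing that, for any probability measure $p\in\P(S_n)$, the pushed-forward measure $q:=P\#p\in\P(\X_{k,n-k})$ is an admissible candidate in the definition of $\widehat Q f(P(\sigma))$ whose cost is no larger than the cost of $p$ in $\wideparen Q (f\circ P)(\sigma)$. Taking infimum over $p$ then yields the stated inequality.

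The linear part is immediate from the change-of-variables formula for pushforward measures:
\[
\int f\circ P \, dp = \int f \, dq.
\]
The work is therefore to compare the two quadratic penalties. Writing, for the fixed $\sigma\in S_n$,
\[
a_j := \int \1_{\sigma(j)\neq y(j)} \, dp(y), \qquad b_i := \int \1_{P(\sigma)_i\neq P(y)_i} \, dp(y),
\]
I want to establish the pointwise bound $b_i\leq a_{\sigma^{-1}(i)}$ for every $i\in[n]$. Since $c(\ell)^2=4$ when $S_n$ is regarded as a $2$-local group, both weights $\tfrac{1}{2c(\ell)^2}$ in $\wideparen Q$ and $\tfrac18$ in $\widehat Q$ coincide, so this pointwise bound will give
\[
\sum_{i=1}^n b_i^2 \leq \sum_{i=1}^n a_{\sigma^{-1}(i)}^2 = \sum_{j=1}^n a_j^2,
\]
because $\sigma^{-1}$ is a bijection of $[n]$.

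The pointwise bound is the main (but mild) step and splits into two cases depending on whether $i\in\sigma([k])$ or not. Fix $i\in[n]$ and set $j_0:=\sigma^{-1}(i)$.
\begin{itemize}
\item If $i\in\sigma([k])$, then $P(\sigma)_i=1$ and $j_0\leq k$. Here $\{P(y)_i\neq P(\sigma)_i\}=\{i\notin y([k])\}\subseteq \{y(j_0)\neq i\}=\{y(j_0)\neq \sigma(j_0)\}$.
\item If $i\notin\sigma([k])$, then $P(\sigma)_i=0$ and $j_0> k$. Here $\{P(y)_i\neq P(\sigma)_i\}=\{i\in y([k])\}$, and since $y$ is a permutation, $i$ can appear at only one position; if that position lies in $[k]$ then $y(j_0)\neq i=\sigma(j_0)$. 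So again $\{P(y)_i\neq P(\sigma)_i\}\subseteq\{y(j_0)\neq\sigma(j_0)\}$.
\end{itemize}
Integrating gives $b_i\leq a_{j_0}=a_{\sigma^{-1}(i)}$ in both cases.

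Combining everything, for this specific $q=P\#p$,
\[
\int f\, dq + \frac 18 \sum_{i=1}^n\Big(\!\int \1_{P(\sigma)_i\neq z_i}\,dq(z)\Big)^2 \leq \int f\circ P\, dp + \frac 1{2c(\ell)^2}\sum_{j=1}^n\Big(\!\int \1_{\sigma(j)\neq y(j)}\,dp(y)\Big)^2.
\]
Since the left-hand side is at least $\widehat Q f(P(\sigma))$ (by definition, as $q\in\P(\X_{k,n-k})$), taking the infimum of the right-hand side over $p\in\P(S_n)$ yields $\widehat Qf(P(\sigma))\leq\wideparen Q(f\circ P)(\sigma)$, which is exactly the lemma. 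The only subtle point in the whole argument is the elementary observation that $\sigma^{-1}$ furnishes the correct bijection relating the index set of the permutation-level penalty to that of the slice-level penalty.
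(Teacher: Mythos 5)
Your proof is correct and follows essentially the same path as the paper: you compare, index by index, the permutation-level penalty with the slice-level penalty via the bijection $\sigma^{-1}$, using exactly the same two-case inclusion of events the paper uses, and then push forward each $p\in\P(S_n)$ to $q=P\#p$. The only cosmetic difference is that the paper first splits the infimum over $p$ into an infimum over $q$ and an inner infimum over $\{p: P\#p=q\}$, whereas you work directly with $q=P\#p$; both amount to the same comparison.
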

It remains to prove this lemma. By definition, one has 
\begin{align*}
\wideparen Q(f\circ P)(\sigma)&=\inf_{p\in \P(S_n)}\left\{\int f\circ P \, dp + \sum_{j=1}^n \left(\int \1_{\sigma(j)\neq \tau(j)} dp(\tau)\right)^2\right\}\\
&=\inf _{q\in \P(\X_{k,n-k})} \inf_{p\in S_n,P\#p=q}\left\{\int f\circ P \, dp + \sum_{j=1}^n \left(\int \1_{\sigma(j)\neq \tau(j)} dp(\tau)\right)^2\right\}\\
&=\inf _{q\in \P(\X_{k,n-k})}\left\{ \int f \,dq + \inf_{p\in S_n,P\#p=q} \left[\sum_{j=1}^n \left(\int \1_{\sigma(j)\neq \tau(j)} dp(\tau)\right)^2\right]\right\}.
\end{align*}
Let $p\in S_n$ such that $P\#p=q$. 

\[\int \1_{\sigma(j)\neq \tau(j)} dp(\tau)=\sum_{y\in\X_{k,n-k}} \sum_{\tau\in S_n}\1_{ P(\tau)=y,\sigma(j)\neq \tau(j)} p(\tau).\]
For  $y\in \X_{k,n-k}$, let us note $Y=\{i\in [n], y_i=1\}$. Then $P(\tau)=y$ if and only if 
$\tau([k])=Y$.

Assume that $j\in[k]$, if $\tau([k])=Y$ and $\sigma(j)\not\in Y$ then $\tau(j)\neq \sigma(j)$. Therefore one has 
\[\big\{\tau, \tau([k])=Y, \sigma(j)\not\in Y \big\}\subset \big\{\tau, P(\tau)=y,\sigma(j)\neq \tau(j)\big\}.\]
Assume now that $j\not \in[k]$, if $\tau([k])=Y$ and $\sigma(j)\in Y$ then we also have $\tau(j)\neq \sigma(j)$. It follows that  
\[\big\{\tau, \tau([k])=Y, \sigma(j)\in Y \big\}\subset \big\{\tau, P(\tau)=y,\sigma(j)\neq \tau(j)\big\}.\]
From these observations, we get 
\begin{align*}
\sum_{j=1}^n \left(\int \1_{\sigma(j)\neq \tau(j)} dp(\tau)\right)^2
&\geq \sum_{j\in [k]}  \left(\int \1_{P(\tau)=y,\sigma(j)\not\in Y} dp(\tau)\right)^2+ \sum_{j\in [n]\setminus[k]}  \left(\int \1_{P(\tau)=y,\sigma(j)\in Y} dp(\tau)\right)^2\\
&= \sum_{j\in [k]}  \left(\int \1_{\sigma(j)\not\in Y} dq(y)\right)^2+ \sum_{j\in [n]\setminus[k]}  \left(\int \1_{\sigma(j)\in Y} dq(y)\right)^2\\
&= \sum_{i\in \sigma([k])}  \left(\int \1_{i\not\in Y} dq(y)\right)^2+ \sum_{i\not\in \sigma([k])}  \left(\int \1_{i\in Y} dq(y)\right)^2\\
&= \sum_{i\in \sigma([k])}  \left(\int \1_{y_i=0} dq(y)\right)^2+ \sum_{i\not\in \sigma([k])}  \left(\int \1_{y_i=1} dq(y)\right)^2\\
\end{align*}
Setting  $x=P(\sigma)$, it follows that 
\begin{align*}
\sum_{j=1}^n \left(\int \1_{\sigma(j)\neq \tau(j)} dp(\tau)\right)^2
&\geq \sum_{i=1}^n  \left[ \1_{x_i=1} \left(\int \1_{y_i=0} dq(y)\right)^2+  \1_{x_i=0} \left(\int \1_{y_i=1} dq(y)\right)^2\right]\\
&=\sum_{i=1}^n \left(\int \1_{y_i\neq x_i} dq(y)\right)^2.
\end{align*}
This inequality provides
\[\wideparen Q(f\circ P)(\sigma)\geq \widehat Q f(x) =\widehat Q f(P(\sigma)).\]
The proof of Lemma \ref{tartine} and  (b) in Theorem \ref{transportslice} is completed.
\end{proof}

\section{Appendix}
\begin{proof}[Proof of Lemma  \ref{map}]
Let $\mathcal {T}=(t_{i_j,j})$ be a $\ell$-local base of a group of permutations $G=G_n$. In order to prove that the map 
\begin{eqnarray}\label{TT}\begin{array}{cccl} & O_2\times O_3\times \cdots \times O_n& \to &G\\
U_{\mathcal T}: &i_2,i_3, \dots ,i_n&\mapsto & t_{i_2,2} t_{i_3,3} \cdots t_{i_n,n}, \end{array}
\end{eqnarray}
is one to one, it suffises to construct its inverse. 

For any $j\in\{2,\ldots,n\}$, let $U_j$ denotes the map defined by  \[U_j(i_2,i_3, \dots ,i_j)=t_{i_2,2} t_{i_3,3} \cdots t_{i_j,j}.\]
Let $\sigma=\sigma^{(n)}\in G$.   We want to find the unique vector $(i_1,\ldots, i_n)\in O_1\times \cdots \times O_n$ such that \[U_n(i_1,\ldots, i_n)= U_{\mathcal T}(i_1,\ldots, i_n)=\sigma.\]
 Since $U_n(i_1,\ldots, i_n)(i_n)=n$, necessarily, one has to fixe $i_n=(\sigma^{(n)})^{-1}(n)$. $i_n$ belongs to  $O_n$. Let $\sigma^{(n-1)}=\sigma^{(n)}t_{(\sigma^{(n)})^{-1}(n),n}^{-1}$. On has $\sigma^{(n-1)}\in G_{n-1}$. 
Then, since \[n-1=U_{n-1}(i_1,\ldots,i_{n-1})(i_{n-1})=\sigma^{(n-1)}(i_{n-1}),\] we necessarily have 
$i_{n-1}= (\sigma^{(n-1)})^{-1}(n-1)\in O_{n-1}$. We set  \[\sigma^{(n-2)}=\sigma^{(n-1)}t_{(\sigma^{(n-1)})^{-1}(n-1),n-1}^{-1}\in G_{n-2}.\]
Following this induction procedure, we construct a family of permutations $\sigma^{(j)}\in G_{j}$ for $j\in [n]$, such that $i_j=(\sigma^{(j)})^{-1}(j)\in O_{j}$ for all $j\in\{2,\ldots,n\}$. 
Observing that $G_1=\{Id\}$, it follows that $\sigma^{(1)}=id$ and therefore 
\[\sigma=\sigma^{(n)}=t_{i_2,2} t_{i_3,3} \cdots t_{i_n,n}.\]
This ends the proof of Lemma  \ref{map}.
\end{proof}

\begin{proof}[Proof of Lemma  \ref{lembase}] Let $G=G_n$ be a $\ell$-local group. From the definition of the $\ell$-local property, it is clear that any of the subgroup $G_j$, $j\in \{2,\ldots,n\}$ is  $\ell$-local. As a consequence, for any $i_j\in O_j$, $i_j\neq j$, there exists $t_{i_j,j}\in G_j$ such that 
\[t_{i_j,j}(i_j)=j,\quad \mbox{and}\quad \degr(t_{i_jj})\leq \ell.\]
This completes the proof of Lemma  \ref{lembase}.
\end{proof}

\begin{proof}[Proof of Lemma  \ref{completegraph}]
Let $\alpha\in (0,1)$ and $f$ be a real  function on the finite set $\X$.
 We want to show that for any probability measure $\nu$ on $\X$, 
\[\left(\int e^ {\alpha \widetilde R^\alpha f} d\nu\right)^{1/\alpha}\left(\int e^{-(1-\alpha)h} d\nu\right)^{1/(1-\alpha)}\leq 1.\]
We will apply the following lemma whose proof is given at the end of this section.
\begin{lemma}\label{chose} Let $F$ be a real function  on $\X$ and $K\in \R$. Let us consider the set 
\[{\mathcal C}:=\left\{ \nu\in {\mathcal P}(\X), \, \int F  \,d\nu=K\right\}.\]
If ${\mathcal C}$ is not empty, then the extremal points of this convex set 
are Dirac measures or convex combinations of two Dirac measures on $\X$.
\end{lemma}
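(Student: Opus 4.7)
The plan is to reduce the lemma to a standard polyhedral/linear-algebra fact. Since the intended application is on a finite set, I would enumerate $\X=\{x_1,\ldots,x_N\}$, identify every probability measure $\nu$ with its vector of weights $p=(p_1,\ldots,p_N)\in\R^N$, and view $\mathcal P(\X)$ as the simplex $\Delta_N:=\{p\in\R^N_+:\sum_i p_i=1\}$. With this identification $\mathcal C$ is the intersection of $\Delta_N$ with the affine hyperplane $\{p:\sum_i F(x_i)p_i=K\}$, hence a (non-empty by hypothesis) convex polytope. The claim to prove is that every extremal point of this polytope has at most two non-zero coordinates.

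The key step is a perturbation argument by contraposition. Suppose $\nu\in\mathcal C$ has support $S:=\mathrm{supp}(\nu)$ with $|S|\geq 3$; I would show $\nu$ is not extremal. Consider the vector space $V$ of signed measures on $\X$ supported on $S$, of dimension $|S|$, and impose on it the two linear constraints
\[
\mu(\X)=\sum_{i:x_i\in S}\mu(x_i)=0,\qquad \int F\,d\mu=\sum_{i:x_i\in S}F(x_i)\mu(x_i)=0.
\]
These are exactly the conditions under which an additive perturbation $\nu\pm\varepsilon\mu$ remains in $\mathcal C$ (total mass $1$ and the $F$-constraint both preserved). Since $|S|\geq 3>2$, a dimension count yields a non-zero $\mu\in V$ satisfying both constraints.

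It remains to use positivity: because $\nu(x)>0$ for every $x\in S$ and $\mu$ vanishes outside $S$, for $\varepsilon>0$ small enough both $\nu+\varepsilon\mu$ and $\nu-\varepsilon\mu$ are non-negative, hence probability measures in $\mathcal C$. Then $\nu=\tfrac12(\nu+\varepsilon\mu)+\tfrac12(\nu-\varepsilon\mu)$ writes $\nu$ as a midpoint of two distinct elements of $\mathcal C$, so $\nu$ is not extremal. Taking the contrapositive gives the lemma.

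I do not expect a serious obstacle; the whole statement is essentially a repackaging of the standard principle that a basic feasible solution of a linear program in standard form with two equality constraints has at most two positive coordinates. The only points requiring slight care are (i) localising $\mathrm{supp}(\mu)\subseteq S$ so that strict positivity of $\nu$ on $S$ absorbs the sign of $\pm\varepsilon\mu$, and (ii) checking that the two linear conditions imposed on $\mu$ are precisely the ones that keep $\nu\pm\varepsilon\mu$ inside $\mathcal C$—both of which are immediate from the setup.
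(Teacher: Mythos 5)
Your proof is correct, and it takes a genuinely different route from the paper. You use the classical perturbation argument from linear programming: with two linear constraints (unit mass and the $F$-constraint), a measure with support of size $\geq 3$ admits a non-trivial signed perturbation $\mu$ supported inside $\supp(\nu)$ that preserves both constraints, and strict positivity on the support lets $\nu\pm\varepsilon\mu$ stay in $\mathcal C$, exhibiting $\nu$ as a non-trivial midpoint. The paper instead proceeds by an explicit construction: given $\nu=\alpha_1\nu_1+\alpha_2\nu_2+\alpha_3\nu_3$ with $\alpha_i>0$, it orders $F_i:=\int F\,d\nu_i$, solves for $\beta,\gamma$ with $K=\beta F_1+(1-\beta)F_2=\gamma F_1+(1-\gamma)F_3$, and verifies by a direct coefficient computation that $\hat\nu_1=\beta\nu_1+(1-\beta)\nu_2$, $\hat\nu_2=\gamma\nu_1+(1-\gamma)\nu_3$ and a suitable $\lambda$ recombine to $\nu$. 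Your argument is shorter, more conceptual, and makes the non-extremality conclusion immediate (the two perturbed measures are manifestly distinct), whereas the paper's version gives explicit formulas at the cost of a case analysis and leaves implicit that $\lambda\in(0,1)$ and $\hat\nu_1\neq\nu$. Both are valid; yours is the more standard and transparent presentation of what is indeed the basic-feasible-solution fact you identify.
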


Given a real function $f$  on $\X$, for any $K\in \R$, let 
\[{\mathcal C}_K= \left\{ \nu\in {\mathcal P}(\X), \, \int e^{-(1-\alpha)f} \,d\nu=K\right\}.\]
One has 
\[\sup_ {\nu\in {\mathcal P}(\X)}\left(\int e^ {\alpha \widetilde R^\alpha f} d\nu\right)^{1/\alpha}\left(\int e^{-(1-\alpha)f} d\nu\right)^{1/(1-\alpha)}
=\sup_{K,{\mathcal C}_K\neq \emptyset } \left(\sup_{\nu\in{\mathcal C}_K} \int e^ {\alpha \widetilde R^\alpha f} d\nu\right)^{1/\alpha} K^{1/(1-\alpha)}
\]
The supremum of the  linear function  $\nu\mapsto \int e^ {\alpha \widetilde R^\alpha f} d\nu$ on the non empty  convex set ${\mathcal C}_K$ is reached at an extremal point of ${\mathcal C}_K$. Therefore, by Lemma \ref{chose}, we get 
\begin{multline*}
\sup_ {\nu\in {\mathcal P}(\X)}\left(\int e^ {\alpha \widetilde R^\alpha f} d\nu\right)^{1/\alpha}\left(\int e^{-(1-\alpha)h} d\nu\right)^{1/(1-\alpha)}
\\=\sup_ {x,y\in \X} \sup_{\lambda\in[0,1]} \left( (1-\lambda)e^ {\alpha \widetilde R^\alpha f(x)}+\lambda e^ {\alpha \widetilde R^\alpha f(y)} \right)^{1/\alpha}\left((1-\lambda) e^{-(1-\alpha)f(x)} + \lambda e^{-(1-\alpha)f(y)}\right)^{1/(1-\alpha)}
\end{multline*}
Now, let $x$ and $y$ be some fixed points of $\X$. It remains to show that for any real function $f$ on $E$ and for any $x,y\in \X$, 
\begin{eqnarray*}
\left( (1-\lambda)e^ {\alpha \widetilde R^\alpha f(x)}+\lambda e^ {\alpha\widetilde R^\alpha f(y)} \right)^{1/\alpha}\left((1-\lambda) e^{-(1-\alpha)f(x)} + \lambda e^{-(1-\alpha)f(y)}\right)^{1/(1-\alpha)}\leq 1.
\end{eqnarray*}

The left-hand side of this inequality 
is invariant by translation of the function $f$ by a constant.  Therefore, by symmetry, we  may assume that $0=f(y)\leq f(x)$. It follows that  $\widetilde R^\alpha f(y)=0$. Therefore we want to check that for any non-negative function $f$ on $\{x,y\}$,  for any $\lambda\in[0,1]$,
\[\left( (1-\lambda)e^ {\alpha\widetilde R^\alpha f(x)}+\lambda  \right)^{1/\alpha}\left((1-\lambda) e^{-(1-\alpha)f(x)} + \lambda \right)^{1/(1-\alpha)}\leq 1,\]
or equivalently, setting $\psi (\lambda)=\left((1-\lambda) e^{-(1-\alpha)f(x)} + \lambda \right)^{-\alpha/(1-\alpha)}-\lambda$,
\[ e^{\alpha \widetilde R^\alpha f(x)}\leq \inf_{\lambda\in[0,1)}\frac{\psi(\lambda)-\psi(1)}{1-\lambda}=-\psi'(1)=\frac\alpha{1-\alpha}\left(1-e^{-(1-\alpha)f(x)}\right) +1,\]
since $\psi$ is a convex function on $[0,1]$. 

So, it suffices  to check that $\widetilde R^\alpha f(x)\leq \phi(f(x))$, where
\[\phi(h)=\frac 1\alpha\log\left(\frac\alpha{1-\alpha}\left(1-e^{-(1-\alpha)h}\right)+1\right), \qquad h\geq 0.\]
The function $\phi$ is concave and $\phi(0)=0$. For all $h\geq 0$, one has 
\[\phi'(h)=\frac{1-\alpha}{e^{(1-\alpha)h}-\alpha}.\]
The function $\phi'$ is a bijection from $[0,+\infty)$ to $(0,1]$. It follows that
\[\phi(h)=\inf_{\theta\in (0,1] }\left\{\theta h + c_\alpha (1-\theta)\right\},\quad h\geq 0,\]
where  $c_\alpha$ is the  convex function defined by 
\[c_\alpha (1-\theta)= \sup_{h\in [0,+\infty)} \left\{-\theta h + \phi(h)\right\}, \quad \theta\in (0,1]. \]
After computations, we get
\[c_\alpha (u):=\frac{\alpha(1-u)\log(1-u)-(1-\alpha u)\log(1-\alpha u)}{\alpha(1-\alpha)},\]
and therefore we exactly have
 for any $x\in \X$,
\[\phi(f(x))=\inf_{\theta\in [0,1] }\left\{\theta f(x) + c_\alpha (1-\theta)\right\}=\widetilde R^\alpha f(x).\]
The proof of Lemma \ref{completegraph} is completed.
\end{proof}

\begin{proof}[Proof of Lemma \ref{chose}.] We will show that,  if $\nu\in{\mathcal C}$  is a convex combination of three probability measures $\nu_1,\nu_2,\nu_3$, 
\[\nu=\alpha_1\nu_1+\alpha_2\nu_2+\alpha_3\nu_3,\]
with $\alpha_1\neq 0$, $\alpha_2\neq 0$, $\alpha_3\neq 0$, and $\alpha_1+\alpha_2+\alpha_3=1$, and $\nu_1(\X)>0$,  $\nu_2(\X)>0$,  $\nu_3(\X)>0$, then there exists two measures $\hat \nu_1, \hat \nu_2$ in ${\mathcal C}$ and $\lambda\in [0,1] $ such  that 
\[ \nu=\lambda \hat \nu_1 + (1-\lambda )\hat \nu_2.\]

Setting $F_i= \int F d\nu_i$, for $i=1,2,3$,  we may assume, without loss of generality, that $F_1\leq F_2\leq F_3$. 
Then one has either $F_1\leq K\leq F_2$, either  $F_2\leq K\leq F_3$.  

We will assume that $F_1\leq K\leq F_2$. The case  $F_2\leq K\leq F_3$ can be treated identically and the proof in that case is let to the reader. 
Since $F_1\leq K\leq F_2$ and $F_1\leq K\leq F_3$, there exists $\beta,\gamma\in[0,1]$ such that 
\begin{eqnarray}\label{cookies}
K=\beta F_1 +(1-\beta) F_2 \quad\mbox{ and }\quad K=\gamma F_1 +(1-\gamma) F_3. 
\end{eqnarray}

If $F_1=F_3$ then $F_1=F_2=F_3=K$ and therefore $\nu_1,\nu_2,\nu_3\in {\mathcal C}$. We may choose $\lambda=\alpha_1$, $\hat \nu_1= \nu_1$ and $\hat \nu_2=\frac{\alpha_2 \nu_2+\alpha_3 \nu_3}{\alpha_2+\alpha_3}$.

If $F_1=F_2$ then necessarily $F_1=F_2=F_3=K$ and we are reduced to the previous case.

So, we may now assume that $F_1\neq F_3$ and $F_1\neq F_2$ and therefore $F_1<K\leq F_2\leq F_3$.
In that case, we  exactly have 
\[\beta= \frac{F_2-K}{F_2-F_1}\quad\mbox{ and }\quad \gamma=\frac{F_3-K}{F_3-F_1}.\]
Let us choose \[\lambda=\frac{\alpha_2}{1-\beta}=\alpha_2\,\frac{F_2-F_1}{K-F_1},\quad \hat\nu_1=\beta\nu_1+(1-\beta)\nu_2,\quad \hat\nu_2=\gamma\nu_1+(1-\gamma)\nu_2.\]
The equalities  \eqref{cookies} ensure that $\hat\nu_1\in {\mathcal C}$ and $\hat\nu_2\in {\mathcal C}$. The proof of Lemma \ref{chose} ends by checking that $\lambda \hat \nu_1+(1-\lambda)\hat \nu_2=\hat\mu$. 
One has 
\begin{eqnarray}\label{cupcake}
\qquad \lambda \hat \nu_1+(1-\lambda)\hat \nu_2=(\lambda \beta+(1-\lambda)\gamma)\nu_1+\lambda(1-\beta)\nu_2+(1-\lambda)(1-\gamma)\nu_3.
\end{eqnarray}
According to the definitions of $\lambda,\beta, \gamma$, we may easily check that $\lambda(1-\beta)=\alpha_2$,  and 
\[(1-\lambda)(1-\gamma)=\frac{K-F_1}{F_3-F_1}-\alpha_2\,\frac{F_2-F_1}{F_3-F_1}.\]
Since $\hat\mu \in {\mathcal C}$, one has $(1-(\alpha_2+\alpha_3))F_1+\alpha_2 F_2+\alpha_3 F_3$ and therefore 
\[(1-\lambda)(1-\gamma)=\alpha_3.\]
As a consequence $\lambda \beta+(1-\lambda)\gamma=1-\alpha_2-\alpha_3=\alpha_1$ and according to \eqref{cupcake}, we get
\[\lambda \hat \nu_1+(1-\lambda)\hat \nu_2=\alpha_1\nu_1+\alpha_2\nu_2+\alpha_3\nu_3=\nu.\]
\end{proof}

\vspace{0,5cm}
 \textbf{ Acknowledgements.} The author thanks R. Adamczak and D. Chafa\"i for useful discussions, that lead to consider  the Ewens distribution, and the number of cycles of given lenght, as examples  to illustrate the results of this paper.

%\bibliographystyle{alpha}

%\bibliography{discrete-example-transport.bib}

\end{document}